\newenvironment{itenv*}
  {\phantomsection\par\medskip\noindent\itshape}
  {\par\medskip}
\newtheorem{defnsub}{Definition}[subsection]
\newtheorem{lemmasub}[defnsub]{Lemma}
\newtheorem{propsub}[defnsub]{Proposition}
\newtheorem{thmsub}[defnsub]{Theorem}
\newtheorem{corsub}[defnsub]{Corollary}
\newtheorem{remksub}[defnsub]{Remark}
\newtheorem{quessub}[defnsub]{Question}
\numberwithin{equation}{subsection}
\DeclareMathOperator{\Z}{\mathbb{Z}} 
\DeclareMathOperator{\R}{\mathbb{R}} 
\DeclareMathOperator{\C}{\mathbb{C}} 
\renewcommand{\Re}{\mathrm{Re}} 
\renewcommand{\Im}{\mathrm{Im}} 
\DeclareMathOperator{\id}{id} 
\DeclareMathOperator{\tr}{tr} 
\DeclareMathOperator{\Hom}{Hom} 
\DeclareMathOperator{\cO}{\mathcal{O}} 
\DeclareMathOperator{\sheafhom}{\mathscr{H}\text{\kern -3pt {\calligra\large om}}\,} 
\renewcommand{\div}{\mathrm{div}} 
\DeclareMathOperator{\ch}{ch} 
\DeclareMathOperator{\cD}{\mathcal{D}} 
\DeclareMathOperator{\coh}{Coh} 
\DeclareMathOperator{\Bl}{Bl} 
\renewcommand{\P}{\mathbb{P}} 
\DeclareMathOperator{\Eff}{Eff} 
\DeclareMathOperator{\Amp}{Amp} 
\tikzset{->-/.style={decoration={ markings, mark=at position #1 with
{\arrow{>}}},postaction={decorate}}}
\tikzset{-<-/.style={decoration={ markings, mark=at position #1 with
{\arrow{<}}},postaction={decorate}}}
\def\lp2{\mathcal O_{\mathbb P^2}(-3)}
\def\p2{\mathbb P^2}
\def\p3{\mathbb P^3}
\def\Stab{\mathrm{Stab}}
\def\coh{\mathrm{Coh}}
\def\num{\mathrm{num}}
\def\Ker{\mathrm{Ker}}
\def\gl2{\widetilde{\mathrm{GL}}^+_{2}(\mathbb R)}
\def\r{\mathrm{r}}
\def\cC{\mathcal C}
\def\cD{\mathcal D}
\def\cL{\mathcal L}
\def\cO{\mathcal O}
\def\cS{\mathcal S}
\def\cZ{\mathcal Z}
\tikzset{->-/.style={decoration={ markings, mark=at position #1 with
{\arrow{>}}},postaction={decorate}}}
\tikzset{-<-/.style={decoration={ markings, mark=at position #1 with
{\arrow{<}}},postaction={decorate}}}
\DeclareMathAlphabet{\mathcal}{OMS}{cmsy}{m}{n}
\begin{document}
\title[Relative stability conditions]{Relative stability conditions on triangulated categories}

\author{Bowen Liu}
\address{Department of Mathematical Sciences, Tsinghua University, 100084 Beijing, China}
\email{liubw22@mails.tsinghua.edu.cn}

\author{Dongjian Wu}
\address{Department of Mathematical Sciences, Tsinghua University, 100084 Beijing, China}
\email{wdj20@mails.tsinghua.edu.cn}

\date{}


\maketitle


\begin{abstract}
We introduce the notion of relative stability conditions on triangulated categories with respect to left admissible subcategories, based on \cite{MR2373143}, and demonstrate the deformation of relative stability conditions via the deformation of gluing stability conditions in \cite{Collins2009GluingSC}. The motivation for this concept stems from the discussions in \cite{MR4479717} concerning the relationship between Bridgeland stability and the existence of the deformed Hermitian-Yang-Mills metrics on line bundles.

\end{abstract}

\maketitle
\tableofcontents
\setcounter{section}{1}


\setlength\parindent{0pt}
\setlength{\parskip}{5pt}

\section*{Introduction}

\subsection{Relative stability conditions}
The notion of stability conditions on triangulated categories was initially introduced by Bridgeland \cite{MR2373143}, drawing inspiration from the fields of string theory and mirror symmetry. 
This concepts extends the classical notion of slope stability for vector bundles. The idea of relative stability conditions on partially wrapped Fukaya categories of marked surfaces was first developed in \cite{Takeda2018RelativeSC}, with the aim of reducing the calculation of stability conditions on Fukaya categories of fully stopped surfaces into basic cases through the process of cutting and gluing. 

In this paper, we introduce the notion of relative stability conditions on triangulated categories with respect to left admissible subcategories. Both concepts utilize the gluing technique. Our notion of spaces of relative stability conditions possesses the deformation property and is of finite dimension. However, the spaces described in \cite{Takeda2018RelativeSC} are infinite-dimensional. 

Let $\mathscr{D}$ be a triangulated category with a left admissible subcategory $\mathscr{D}_1$ and $\mathscr{D}=\langle\mathscr{D}_1,\mathscr{D}_2:={^{\perp}\mathscr{D}_1}\rangle$ be the semiorthogonal decomposition defined in 
\cite{Bondal1995SemiorthogonalDF, MR1957019}. Throughout this paper, we maintain the assumption that the semiorthogonal decomposition under consideration is \emph{polarizable}, meaning that each factor admits a stability condition. Denote by $i^{\ast}_1$ the left adjoint functor to the inclusion $i_1\colon \mathscr{D}_0\hookrightarrow\mathscr{D}$ and $i_2^{!}$  the right adjoint functor to the inclusion $i_2\colon\mathscr{D}_2\hookrightarrow\mathscr{D}$. We consider the space of stability conditions on $\mathscr{D}_i$ with respect to $(\Lambda_i,v_i)$. We define the relative stability conditions as follows:
\begin{defnsub}
\label{def-rel-1}
A relative stability condition $\sigma_{\r}=(Z,\mathscr{P}_1)$ on $\mathscr{D}$ with respect to a left admissible subcategory $\mathscr{D}_1$ is defined by a group homomorphism $Z\colon \Lambda\to\mathbb C$ and a slicing $\mathscr{P}_1$ of $\mathscr{D}_1$ that fulfills the following conditions:
\begin{enumerate}[$(1)$]
\item $(Z,\mathscr{P}_1)$ can be \emph{relatively extended} to a stability condition ${\sigma}=(Z,{\mathscr{P}})$ on $\mathscr{D}$, i.e. $\mathscr{P}_1(\phi)\subset{\mathscr{P}}(\phi)$ for any $\phi\in\mathbb R$, and $(Z_2,\mathscr{P}_2):=(Z\vert_{\Lambda_2},\mathscr{P}\cap\mathscr{D}_2)\in\Stab(\mathscr{D}_2)$; 
\item There exist $0<\epsilon<\delta<1$ such that for any heart $\mathscr{A}_i\in\mathscr{P}_i((\delta-\epsilon,1+\delta+\epsilon))$ of a bounded $t$-structure of $\mathscr{D}_i$,
\[
\langle\mathscr{A}_2,\mathscr{A}_1\rangle:=\{X\in\mathscr{D}\mid i_1^{\ast}(X)\in\mathscr{A}_1, i^{!}_{2}(X)\in\mathscr{A}_2\}
\]
constitutes a heart of a bounded $t$-structure in $\mathscr{D}$.
\end{enumerate}
\end{defnsub}
Note that Condition (2) in \Cref{def-rel-1} is inherently satisfied if $\mathscr{P}_2$ is of finite length and has finite number of simple objects. Denote by $\Stab(\mathscr{D},\mathscr{D}_1)$ the space of relative stability conditions on $\mathscr{D}$ with respect to $\mathscr{D}_1$. Based on \Cref{redundant} and \Cref{eredundant}, in the cases of the bounded derived categories of coherent sheaves on any smooth projective complex variety, as well as to the bounded derived category of modules over a finite-dimensional algebra with finite global dimension. the Condition (2) in \Cref{def-rel-1} is not required.

In Section \ref{sec: deform}, we demonstrate the following deformation property of spaces of relative stability conditions via the deformation of gluing stability conditions in \cite{Collins2009GluingSC}, which extends the result in \cite{MR2373143}. 
\begin{thmsub}[Theorem \ref{rdeformation}]
Let $\mathscr{D}$ be a triangulated category with a left admissible subcategory $\mathscr{D}_1\subset\mathscr{D}$. The space of relative stability conditions $\Stab(\mathscr{D},\mathscr{D}_1)$ has the structure of a complex manifold, and the map 
\[
\cZ_1\colon\Stab(\mathscr{D},\mathscr{D}_1)\to\Hom_{\mathbb Z}(\Lambda,\mathbb C)
\]
that sends a relative stability condition to its relative central charge is a local isomorphism.
\end{thmsub}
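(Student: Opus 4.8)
The plan is to identify $\Stab(\mathscr{D},\mathscr{D}_1)$ with an open subset of $\Stab(\mathscr{D}_1)\times\Hom_{\mathbb Z}(\Lambda_2,\mathbb C)$ and then to invoke Bridgeland's deformation theorem \cite{MR2373143} for the single category $\mathscr{D}_1$ together with the deformation of gluing stability conditions of \cite{Collins2009GluingSC}. Using the splitting $\Hom_{\mathbb Z}(\Lambda,\mathbb C)=\Hom_{\mathbb Z}(\Lambda_1,\mathbb C)\oplus\Hom_{\mathbb Z}(\Lambda_2,\mathbb C)$ induced by the semiorthogonal decomposition, I would consider
\[
\Phi\colon\Stab(\mathscr{D},\mathscr{D}_1)\longrightarrow\Stab(\mathscr{D}_1)\times\Hom_{\mathbb Z}(\Lambda_2,\mathbb C),\qquad (Z,\mathscr{P}_1)\longmapsto\big((Z\vert_{\Lambda_1},\mathscr{P}_1),\ Z\vert_{\Lambda_2}\big).
\]
This is well defined: if $\sigma=(Z,\mathscr{P})$ is a relative extension as in condition $(1)$, then $\mathscr{P}(\phi)\cap\mathscr{D}_1=\mathscr{P}_1(\phi)$ for all $\phi$ — any $\mathscr{P}_1$-Harder--Narasimhan filtration of an object of $\mathscr{D}_1$ is simultaneously a filtration by $\mathscr{P}$-semistables, hence trivial once the object is $\mathscr{P}$-semistable — so $(Z\vert_{\Lambda_1},\mathscr{P}_1)\in\Stab(\mathscr{D}_1)$ (the support property, if imposed, descends since $\sigma_1$-semistables are $\sigma$-semistables). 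The map $\Phi$ is clearly injective, and, once $\Stab(\mathscr{D},\mathscr{D}_1)$ is equipped with the generalized-metric topology of \cite{MR2373143} built from $\mathscr{P}_1$ and $Z$, it is a homeomorphism onto its image, because both $\Phi$ and the assembly of $(Z,\mathscr{P}_1)$ out of a pair $(\sigma_1,W)$ with $Z=(\cZ(\sigma_1),W)$ are mere restriction and recombination of the defining data.

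I would then describe the image $\mathcal U:=\Phi(\Stab(\mathscr{D},\mathscr{D}_1))$ as the set of pairs $(\sigma_1,W)$ for which there exists some $\sigma_2\in\Stab(\mathscr{D}_2)$ of central charge $W$ with $(\sigma_1,\sigma_2)$ \emph{gluable} in the sense of \cite{Collins2009GluingSC} and condition $(2)$ holding for the slicings of $\sigma_1$ and $\sigma_2$. The inclusion ``$\supseteq$'' is immediate, since the gluing $\sigma_1\star\sigma_2$ is a relative extension of $\big((\cZ(\sigma_1),W),\mathscr{P}_1\big)$. For ``$\subseteq$'', given a relative stability condition with relative extension $\sigma=(Z,\mathscr{P})$, both restrictions $\sigma\cap\mathscr{D}_1$ and $\sigma\cap\mathscr{D}_2$ are stability conditions, and a slicing on $\mathscr{D}=\langle\mathscr{D}_1,\mathscr{D}_2\rangle$ whose restrictions to the two factors are slicings must coincide with the gluing of those restrictions \cite{Collins2009GluingSC}; hence $\sigma=\sigma_1\star\sigma_2$ and $(\sigma_1,\sigma_2)$ is gluable. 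This reconstruction also shows that the relative extension, and with it the associated $\sigma_2$, is uniquely determined, at least locally — the crucial point where the semiorthogonal structure is used.

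Openness of $\mathcal U$ is the heart of the matter. Fix $(\sigma_1^0,W^0)\in\mathcal U$ with witnessing $\sigma_2^0$. For $(\sigma_1,W)$ in a small neighbourhood, Bridgeland's deformation theorem applied to $\mathscr{D}_2$ produces a unique $\sigma_2\in\Stab(\mathscr{D}_2)$ near $\sigma_2^0$ with central charge $W$; the deformation of gluing stability conditions of \cite{Collins2009GluingSC} then guarantees that gluability of $(\sigma_1^0,\sigma_2^0)$ persists, so that $(\sigma_1,\sigma_2)$ is again gluable. Condition $(2)$ also persists: if it holds for $(\sigma_1^0,\sigma_2^0)$ with parameters $0<\epsilon<\delta<1$, then for $(\sigma_1,W)$ close enough the slicings $\mathscr{P}_1,\mathscr{P}_2$ have all phases within $\eta<\epsilon$ of those of $\mathscr{P}_1^0,\mathscr{P}_2^0$, so any heart of $\mathscr{D}_i$ lying in $\mathscr{P}_i((\delta-\epsilon+\eta,1+\delta+\epsilon-\eta))$ already lies in $\mathscr{P}_i^0((\delta-\epsilon,1+\delta+\epsilon))$, and condition $(2)$ holds with parameters $(\epsilon-\eta,\delta)$; in the settings of \Cref{redundant} and \Cref{eredundant} this step is unnecessary. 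Hence $(\sigma_1,W)\in\mathcal U$, proving $\mathcal U$ open.

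Finally, by Bridgeland's theorem \cite{MR2373143} the space $\Stab(\mathscr{D}_1)$ is a complex manifold of dimension $\rk\Lambda_1$ and $\cZ\colon\Stab(\mathscr{D}_1)\to\Hom_{\mathbb Z}(\Lambda_1,\mathbb C)$ is a local isomorphism; since $\Hom_{\mathbb Z}(\Lambda_2,\mathbb C)\cong\mathbb C^{\rk\Lambda_2}$, the open set $\mathcal U$ — and therefore $\Stab(\mathscr{D},\mathscr{D}_1)$ via $\Phi$ — acquires the structure of a complex manifold of dimension $\rk\Lambda$, with $\cZ_1$ itself usable as a holomorphic chart so that transition maps are automatically holomorphic. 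Under $\Phi$ the relative central charge map becomes $(\sigma_1,W)\mapsto(\cZ(\sigma_1),W)$, which is a local isomorphism because its first coordinate is. I expect the main obstacle to be the openness step, and within it the input from \cite{Collins2009GluingSC} that gluability is stable under simultaneous small deformations of \emph{both} factors; the reconstruction statement of the second paragraph is a close second, since it is what makes $\Phi$ land in the product and be compatible with the charge map.
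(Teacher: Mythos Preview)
Your approach is a clean reorganization of the paper's argument and is largely sound. The paper topologizes $\Stab(\mathscr{D},\mathscr{D}_1)$ via the generalized metric $d_{\r}=d_1+d_2$, where $d_1$ is Bridgeland's metric on the $\mathscr{D}_1$ data and $d_2$ is the operator norm on $Z|_{\Lambda_2}$; this is exactly the metric making your $\Phi$ an isometric embedding, so the two setups agree. The paper then proves local injectivity of $\cZ_1$ directly from $d_{\r}$ (its Lemma~\ref{local-inj}) and local surjectivity by an explicit construction, whereas you recast both as the openness of $\mathcal U=\Phi(\Stab(\mathscr{D},\mathscr{D}_1))$ together with Bridgeland's theorem on $\mathscr{D}_1$.

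The one place your sketch is too optimistic is the openness step. You invoke ``the deformation of gluing stability conditions of \cite{Collins2009GluingSC}'' to conclude that gluability of $(\sigma_1,\sigma_2)$ persists, but the CP gluing theorem (Theorem~\ref{gluestab} here) requires Hom-vanishing hypotheses that are \emph{not} assumed of the original pair --- condition~(1) of Definition~\ref{def-ref} only posits that a glued extension exists, not that CP's sufficient conditions hold --- and CP's openness statements are formulated inside $\Stab(\mathscr{D})$, not in the product $\Stab(\mathscr{D}_1)\times\Stab(\mathscr{D}_2)$. What actually carries the weight is condition~(2): it asserts outright that any hearts $\mathscr{A}_i\subset\mathscr{P}_i((\delta-\epsilon,1+\delta+\epsilon))$ glue to a heart of $\mathscr{D}$, and for nearby $(\sigma_1,\sigma_2)$ the $\delta$-hearts of the new slicings land in this thickening. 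The paper uses this to produce the glued heart $\mathscr{A}_\delta$ and then --- the step your outline skips --- verifies by a torsion-pair argument and local finiteness of the slices that the deformed central charge has the Harder--Narasimhan property on it, so that one obtains a genuine stability condition rather than merely a heart. Replace your appeal to CP by this direct use of condition~(2) together with that HN check; your subsequent shrinking argument $(\epsilon,\delta)\rightsquigarrow(\epsilon-\eta,\delta)$ for the persistence of condition~(2) itself is fine. (The aside that the relative extension and its $\sigma_2$ are ``uniquely determined'' is neither needed nor clearly true --- distinct $\sigma_2$'s with the same central charge may witness the same relative stability condition --- but this is harmless, since $\Phi$ records only $Z|_{\Lambda_2}$.)
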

In other words, Theorem \ref{rdeformation} allows us to deform a relative stability condition $(Z,\mathscr{P}_1)$ (uniquely) by deforming $Z$.  If $\mathscr{D}_1=\mathscr{D}$, $\Stab(\mathscr{D},\mathscr{D}_1)$ is equivalent to $\Stab(\mathscr{D})$, which indicates that stability conditions are a specialized form of relative stability conditions. In addition, we introduce some initial examples of spaces of relative stability conditions on triangulated categories in Section \ref{initial-example}.

\subsection{Motivations from dHYM equations}
It is a fundamental principle that has directed a significant portion of research in complex geometry since the late 20-th century: stable objects in algebraic geometry should correspond to extremal objects in differential geometry. This philosophy was inspired by the relations between the slope stability and the existence of Hermitian-Einstein metrics, which has been explored extensively in studies such as \cite{MR0184252,MR0710055,MR0765366,MR0861491,MR0944577}. The large volume limit of Bridgeland stability is understood to be linked to slope stability. As a result, it is intriguing to examine the relations between Bridgeland stability and the existence of dHYM on line bundles. The work in \cite{MR4479717} demonstrates that on $\Bl_p\P^2$, every line bundle that admits a dHYM metric with for a given ample $\R$-divisor $\omega$ is Bridgeland stable with respect to a stability condition associated with $\omega$. However, the reverse implication does not hold. This result was later generalized in \cite{collins2023stabilitylinebundlesdeformed} for Weierstrass elliptic K3 surfaces. In a broader framework, \cite[Question 3.3]{collins2023stabilitylinebundlesdeformed} proposes a conjecture suggesting that twisted ampleness is a sufficient condition for Bridgeland stable on any smooth projective complex surface. In order to establish the converse, one straightforward approach is to contract the hearts of stability conditions, which leads to the development of the concept of relative stability conditions in this paper. 

The question concerning the relation between relative stability and the solvability of dHYM equation for any smooth projective complex variety is formulated as follows:
\begin{quessub}[Question \ref{main-question}]
Let $X$ be a smooth projective complex variety. Given an ample $\R$-divisor $\omega$ and a $\R$-divisor $B$. Determine admissible subcategories $0\ne\mathscr{D}_1$ of $\mathscr{D}^b(X)$ for which a line bundle $\mathcal{L}\in\mathrm{Coh}(X)$ is relatively stable with respect to $\sigma_{\r,\omega,B}=(Z_{\omega,B},\mathscr{P}_1:=\mathscr{P}_{\omega,B}\cap\mathscr{D}_1)\in\Stab(X,\mathscr{D}_1)$
if and only if $\cL\in\mathscr{P}_1((0,1])$ admits a dHYM metric with respect to $(\omega, B)$.
\end{quessub}

In Proposition \ref{prop: surf-with-one}, we establish that Question \ref{main-question} is resolved by any admissible subcategories for the case of projective curves with positive genus and projective surfaces without negative self-intersection curve. In Proposition \ref{prop: possible admissible subcategory for question on Hirzebruch surface}, we identify certain admissible subcategories generated by exceptional objects that resolve Question \ref{main-question} for Hirzebruch surfaces. Furthermore, Proposition \ref{prop: counter-example for large volume case} shows that there is a discrepancy between Bridgeland stability and the dHYM metric on Hirzebruch surfaces, suggesting that the 
consideration of relative stability conditions (or Question \ref{main-question}) is reasonable for exploring the relation between Bridgeland stability and the existence of the dHYM metric on line bundles.

\subsection{Contents}
In Section \ref{sec: pre}, we review the basic facts of Bridgeland stability conditions on triangulated categories, as well as the semiorthogonal decomposition. In Section \ref{sec: relative-stab}, we introduce the notion of relative (Bridgeland) stability conditions on triangulated categories with respect to left admissible subcategories, and then we illustrate the deformation of these relative stability conditions. Moreover, we will provide some examples. In Section \ref{sec: relstab-dHYM}, we offer a brief overview of the background on dHYM equations. We then propose a question concerning the relations between the relative stability and solvability of dHYM equations. Afterward, we examine the question through some cases.

\subsection*{Acknowledgement}
Bowen Liu expresses gratitude to his supervisor Chenglong Yu for his consistent support, and he also extends thanks to Mao Sheng for guiding him into the study of the relations between dHYM equations and Bridgeland stability conditions.
Dongjian Wu is grateful for his supervisor Yu Qiu for his continuous support and patience throughout his research. He is also thankful to Fabian Haiden for valuable insights and discussions. In addition, he is appreciative of the hospitality offered by the centre for QM at SDU.


\section{Preliminaries}
\label{sec: pre}
In this section, we will review some fundamental results of Bridgeland stability conditions on triangulated categories in \cite{MR2373143}, as well as semiorthogonal decomposition as presented in \cite{Bondal1995SemiorthogonalDF, MR1957019}.  Throughout the paper, we assume that the Grothendieck group of a triangulated category $\mathscr{D}$, denoted by $K(\mathscr{D})$, is finitely generated.

\subsection{Bridgeland stability conditions}
\label{sec: stab}
Let $\mathscr{D}$ be a triangulated category, for which we fix a finite rank lattice $\Lambda$ with a surjective group homomorphism $v\colon K(\mathscr{D})\twoheadrightarrow\Lambda$ and a norm $\Vert\cdot\Vert$ on $\Lambda_{\mathbb R}$.

\begin{defnsub}
A \emph{slicing} $\mathscr{P}$ of a triangulated category $\mathscr{D}$ consists of full additive subcategories $\mathscr{P}(\phi)\subset\mathscr{D}$ for each $\phi\in\mathbb R$ satisfying the following axioms:
\begin{enumerate}[$(a)$]
\item for all $\phi\in \mathbb R$, $\mathscr{P}(\phi +1) = \mathscr{P}(\phi)[1]$;
\item if $\phi_1>\phi_2$ and $A_i\in \mathscr{P}(\phi_i)\,,i=1,2$, then ${\rm Hom}_{\mathscr{D}}(A_1, A_2) = 0$;
\item for $0\neq E\in \mathscr{D}$, there is a finite sequence of real numbers
\[
\phi_1>\phi_2>\dots>\phi_m
\]
and a collection of triangles called \emph{Harder-Narasimhan filtration}
\begin{center}
\begin{tikzcd}
0=E_0 \arrow[rr] &                        & E_1 \arrow[ld] \arrow[r] & \dots \arrow[r] & E_{m-1} \arrow[rr] &                        & E_m=E \arrow[ld] \\
                 & A_1 \arrow[lu, dashed] &                          &                 &                    & A_m \arrow[lu, dotted] &                 
\end{tikzcd}
\end{center}
with $A_i\in\mathscr{P}(\phi_i)$ for all $1\leq i\leq m$.
\end{enumerate}
\end{defnsub} 
Let $E$ be a nonzero object in $\mathscr{D}$ that admits a Harder-Narasimhan filtration as described in axiom (c). We associate two numbers with $E\colon \phi^+_{\sigma}(E):=\phi_1$ and $\phi^-_{\sigma}(E):=\phi_m$, where $\phi_1$ and $\phi_m$ are the phases from axiom (c). An object $E\in \mathscr{P}(\phi)$ for some $\phi\in \mathbb R$ is called semistable, and in such a case, $\phi = \phi^{\pm}_{\sigma}(E)$. Moreover, if $E$ is a simple object in $\mathscr{P}(\phi)$, it is said to be stable. We define $\mathscr{P}(I)$ for an interval $I$ in $\mathbb R$ as
 \[
 \mathscr{P}(I) = \{E\in \mathscr{D}\ |\ \phi^{\pm}_{\sigma}(E)\in I\}\cup\{0\}.
 \]
Consequently, for any $\phi\in \mathbb R$, both $\mathscr{P}[\phi, \infty)$ and $\mathscr{P}(\phi, \infty)$ are $t$-structures in $\mathscr{D}$ with hearts $\mathscr{P}((\phi,\phi+1])$ and $\mathscr{P}([\phi,\phi+1))$, respectively. According to \cite{MR2373143}, each category $\mathscr{P}(\phi)\subset\mathscr{D}$ is abelian and $\mathscr{P}(I)\subset\mathscr{D}$ is quasi-abelian for any interval $I\subset\mathbb R$ of length $<1$.

\begin{defnsub}[\protect{\cite[Definition 5.1]{MR2373143}}]
A \emph{Bridgeland pre-stability condition} $\sigma=(Z, \mathscr{P})$ on $\mathscr{D}$ with respect to $(\Lambda,v)$ is characterized by a group homomorphism $Z\colon  \Lambda\to \mathbb C$, termed the \emph{central charge} and a slicing $\mathscr{P}$ of $\mathscr{D}$ such that if $0\ne E\in\mathscr{P}(\phi)$ then $Z(v(E))=m(E)\mathrm{exp}(\sqrt{-1}\pi\phi)$ for some $m(E)\in\mathbb R_{>0}$.
\end{defnsub}

An object $E\in\mathscr{D}$ is called $\sigma$-stable (resp. $\sigma$-semistable) if $E$ is stable (resp. semistale) with respect to the slicing $\mathscr{P}$. We now restrict our attention to Bridgeland pre-stability conditions that fulfill the \emph{support property} (\cite{kontsevich2008stabilitystructuresmotivicdonaldsonthomas}). 

\begin{defnsub}
\label{def: support}
A Bridgeland pre-stability condition $\sigma=(Z,\mathscr{P})$ satisfies the support property with repsect to $(\Lambda,v)$ if there exists a constant $C>0$ such that for all $\sigma$-semistable objects $0\ne E\in\mathscr{D}$, we have 
\[
\Vert v(E)\Vert\le C\vert Z(v(E))\vert.
\]
\end{defnsub}

There is an equivalent formulation of support property (\cite[Section 2.1]{kontsevich2008stabilitystructuresmotivicdonaldsonthomas}): $\sigma=(Z,\mathscr{P})$ satisfies the support property with respect to $(\Lambda,v)$ if there  exists a quadratic form $Q$ on $\Lambda_{\mathbb R}$ such that 
 \begin{enumerate}[$(a)$]
         \item $\Ker\,Z$ is negative definite with respect to $Q$;
         \item Every $\sigma$-semistable object $E\in\mathscr{D}$ satisfies $Q(v(E))\ge0$. 
 \end{enumerate}

A Bridgeland pre-stability condition that satisfies the support property is called a \emph{Bridgeland stability condition}. If we take $\Lambda$ to be the numerical Grothendieck group $K_{\num}(\mathscr{D})$, and $v$ the natural projection, $\sigma$ is called a \emph{numerical Bridgeland stability condition}. Unless there is a specific need to emphasis $(\Lambda,v)$, we will denote the set of stability conditions with respect to $(\Lambda,\lambda)$ simply by $\mathrm{Stab}(\mathscr{D})$. A stability condition $\sigma$ is said to be \emph{locally finite} if for any $\phi$ there exists an $\epsilon>0$ such that $\mathscr{P}((\phi-\epsilon,\phi+\epsilon))$ is finite length. 

As described in \cite{MR2373143}, $\mathrm{Stab}(\mathscr{D})$ possesses a natural topology induced by the generalized metric
\[
d(\sigma_1,\sigma_2)=\sup\limits_{0\ne E\in\mathscr{D}}\left\{|\phi_{\sigma_2}^-(E)-\phi_{\sigma_1}^-|, |\phi^+_{\sigma_2}(E)-\phi^+_{\sigma_1}(E)|, \left|\mathrm{log}\frac{m_{\sigma_2}(E)}{m_{\sigma_1}(E)} \right| \right\}.
\]
The main theorem in \cite{MR2373143} asserts that $\Stab(\mathscr{D})$ possesses a complex structure:
\begin{thmsub}[\protect{\cite[Theorem 1.2]{MR2373143}}]
\label{deformation}
The space of stability conditions $\mathrm{Stab}(\mathscr{D})$ has the structure of a complex manifold of dimension $\mathrm{rk}(\Lambda)$, and the map
\begin{equation*}
\cZ\colon \mathrm{Stab}(\mathscr{D})\to\mathrm{Hom}_{\mathbb Z}(\Lambda,\mathbb C)
\end{equation*}
that sends a stability condition to its central charge is a local isomorphism.
\end{thmsub}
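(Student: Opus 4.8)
The plan is to follow Bridgeland's metric-space strategy and reduce the theorem to the single assertion that $\cZ$ is a local homeomorphism onto an open subset of the finite-dimensional complex vector space $\Hom_{\mathbb{Z}}(\Lambda,\mathbb{C})\cong\mathbb{C}^{\rk\Lambda}$. Granting this, the complex structure is transported from $\Hom_{\mathbb{Z}}(\Lambda,\mathbb{C})$ along the local charts $\cZ$: any two overlapping charts differ by the identity of $\Hom_{\mathbb{Z}}(\Lambda,\mathbb{C})$, so all transition maps are biholomorphic, the atlas is holomorphic, and $\dim_{\mathbb{C}}\Stab(\mathscr{D})=\rk\Lambda$. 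Continuity of $\cZ$ is immediate from the generalized metric displayed above, since a bound on $d(\sigma_1,\sigma_2)$ controls both the phase differences and $\big|\log(m_{\sigma_2}(E)/m_{\sigma_1}(E))\big|$, hence bounds $|Z_2(v(E))-Z_1(v(E))|$ uniformly over semistable $E$ and therefore controls the operator norm $\|Z_2-Z_1\|$.

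First I would establish local injectivity: if $\sigma=(Z,\mathscr{P})$ and $\tau=(Z,\mathscr{Q})$ share the same central charge and $d(\sigma,\tau)$ is sufficiently small, then $\mathscr{P}=\mathscr{Q}$. The mechanism is Bridgeland's comparison of Harder--Narasimhan filtrations: the identity $Z(v(E))=m_\sigma(E)e^{i\pi\phi}$ holding for a $\sigma$-semistable $E$ of phase $\phi$ with respect to both slicings, together with the distance bound confining the $\tau$-phases of the factors of $E$ to a narrow window about $\phi$, forces the two filtrations of each object to coincide, giving $\mathscr{P}(\phi)=\mathscr{Q}(\phi)$ for all $\phi$. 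Consequently $\cZ$ is injective on each sufficiently small metric ball $B_\epsilon(\sigma)$, and it remains only to show that the $\cZ$-image of such a ball contains a neighborhood of $Z$.

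The heart of the argument, and the main obstacle, is the deformation lemma: for $\sigma=(Z,\mathscr{P})$ with support property witnessed by a quadratic form $Q$ as in the excerpt, there is $\epsilon>0$ so that every $W$ with $\|W-Z\|$ small is the central charge of a unique $\tau=(W,\mathscr{Q})\in\Stab(\mathscr{D})$ with $d(\sigma,\tau)<\epsilon$. The support inequality $\|v(E)\|\le C|Z(v(E))|$ converts the bound on $\|W-Z\|$ into a uniform bound on the phase rotation $|\arg W(v(E))-\arg Z(v(E))|$ over all $\sigma$-semistable $E$, so each such $E$ acquires a well-defined tentative $W$-phase within $\epsilon$ of its old phase. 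To build $\mathscr{Q}$ I would work window by window: choosing an endpoint $\phi_0$ that avoids the finitely many $\sigma$-phases near it ---possible because $\sigma$ is locally finite--- and taking the deformation smaller than the resulting phase gaps, the heart $\mathscr{A}=\mathscr{P}((\phi_0,\phi_0+1])$ is unchanged and $W$ remains a stability function on it; the slicing $\mathscr{Q}$ is then recovered from the $W$-Harder--Narasimhan filtrations inside $\mathscr{A}$, which exist by a finite induction refining the $\sigma$-filtrations since only finitely many slopes occur in each window. The delicate point is precisely this boundary behaviour, where a semistable phase could cross $\phi_0$; local finiteness is what makes it controllable.

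It remains to verify that $\tau$ again satisfies the support property, which is what lets the deformation stay inside $\Stab(\mathscr{D})$; I expect this to be the subtlest step. Since $\Ker W$ is $Q$-negative-definite for $W$ near $Z$ (an open condition), it suffices to show $Q(v(E))\ge0$ for every $\tau$-semistable $E$. Writing the $\sigma$-HN factors $A_1,\dots,A_k$ of such an $E$, $\tau$-semistability forces all their $\sigma$-phases into an interval of length $O(\epsilon)$, so the $Z(v(A_j))$ lie in a common narrow sector. On the real $2$-plane spanned by any two of the classes $v(A_j)$, the $Q$-negative-definiteness of $\Ker Z$ forces $Q$ to have signature at most $(1,1)$, so $\{Q\ge0\}$ there is a union of two opposite convex sectors; the narrow-sector condition places the classes in a single one of them, and convexity, applied inductively to the partial sums, yields $Q\big(v(E)\big)=Q\big(\sum_j v(A_j)\big)\ge0$. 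Granting the deformation lemma, $\cZ$ maps a neighborhood of $\sigma$ onto an open neighborhood $\{\,\|W-Z\|<\delta\,\}$ of $Z$, and uniqueness together with the estimate $d(\sigma,\tau)\lesssim\|W-Z\|$ makes the inverse continuous; combined with the local injectivity above this exhibits $\cZ$ as a local homeomorphism, completing the proof.
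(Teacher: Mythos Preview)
The paper does not supply its own proof of this statement: Theorem~\ref{deformation} is quoted verbatim from Bridgeland's original paper \cite[Theorem 1.2]{MR2373143} and used as a black box (the paper's own contribution is the relative analogue, Theorem~\ref{rdeformation}). So there is nothing in the paper to compare your proposal against.

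Your sketch follows the standard strategy from the literature (Bridgeland's metric argument together with the Kontsevich--Soibelman/Bayer support-property formulation), and the overall architecture is correct. One imprecision worth flagging: local finiteness of $\sigma$ means each $\mathscr{P}((\phi-\epsilon,\phi+\epsilon))$ is of finite length, not that only finitely many phases occur near $\phi_0$, so you cannot literally ``choose $\phi_0$ avoiding the finitely many $\sigma$-phases near it.'' The usual fix (visible also in the paper's proof of Theorem~\ref{rdeformation}) is to decompose the deformation into two steps, first keeping $\Im Z$ fixed and then $\Re Z$ fixed, so that in each step the heart $\mathscr{P}((0,1])$ (respectively $\mathscr{P}((\tfrac{1}{2},\tfrac{3}{2}])$) is preserved and $W$ remains a genuine stability function on it; the HN filtrations are then produced from the finite-length structure rather than from a phase-gap argument.
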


 A connected component of $\Stab(\mathscr{D})$ is called $\emph{full}$ if it has the maximal dimension, meaning that its image under $\cZ$ is equal to $\Hom_{\mathbb Z}(\Lambda,\mathbb C)$. A stability condition is \emph{full} if it lies in a full component.

\begin{defnsub}[\protect{\cite[Definition 2.1, 2.2]{MR2373143}}]
A \emph{stability function} on an abelian category $\mathscr{A}$ is a group homomorphism $Z\colon K(\mathscr{A})\to\mathbb C$ such that for all $0\ne E\in\mathscr{A}$ the complex number $Z(E)$ lies in the strict upper half-plane, i.e.
$$
\begin{aligned}
Z(E)&=m(E)\cdot\mathrm{exp}(\sqrt{-1}\phi(E))\in\mathbb H_+\\
&:=\{m\cdot\mathrm{exp}(\sqrt{-1}\pi\phi)\mid m\in\mathbb R_{>0}, \phi\in(0,1]\}\subset\mathbb C,
\end{aligned}
$$
and $\phi(E)$ is called the \emph{phase} of $E$.
\end{defnsub}

An object $0\ne E\in\mathscr{A}$ is called $Z$-\emph{semistable} if every non-zero object $A\subset E$ satisfies $\phi(A)\le\phi(E)$. According to \cite[Proposition 5.3]{MR2373143}, a stability condition $\sigma=(Z,\mathscr{P})\in\Stab(\mathscr{D})$ can be equivalently defined in terms of a pair $(Z_{\mathscr{A}},\mathscr{A})$, where $\mathscr{A}$ is the heart of a bounded $t$-structure on $\mathscr{D}$ (\cite{MR0751966}) and $Z_{\mathscr{A}}$ is a \emph{stability function} on $K(\mathscr{A})$ that satisfies the Harder-Narasimhan property and the support property. The \emph{mass} of an object $0\ne E\in\mathscr{D}$ is defined as $m_{\sigma}(E):=\Sigma_i\vert Z(A_i)\vert$, where $\{A_i\}$ are the semistable factors of $E$ from the Harder-Narasimhan filtration of $E$. 

Let $\mathrm{Aut}(\mathscr{D})$ denote the group of automorphisms of $\mathscr{D}$, $\mathrm{GL}^+_2(\mathbb R)$ be the group of elements in $\mathrm{GL}_2(\mathbb R)$ with positive determinant and let $\widetilde{\mathrm{GL}}_2^+(\mathbb R)$ be the universal cover of $\mathrm{GL}^+_2(\mathbb R)$. There exists a left action of $\mathrm{Aut}(\mathscr{D})$ and a right action of the group $\widetilde{\mathrm{GL}}_2^+(\mathbb R)$ on $\mathrm{Stab}_{\Lambda}(\mathscr{D})$ as described in \cite{MR2373143}.  
For any $T\in\mathrm{Aut}(\mathscr{D})$ and stability condition $\sigma=(Z,\mathscr{P})$, we define 
\[
T\sigma=(Z',\mathscr{P}'),\quad Z'=ZT^{-1},\quad \mathscr{P}'_{\phi}=T\mathscr{P}_{\phi}.
\]

For any element $g=(T,f)\in\widetilde{\mathrm{GL}}_2^+(\mathbb R)$, we define
\[
\sigma[g]=(Z[g],\mathscr{P}[g]), \quad Z[g]=T^{-1}Z,\quad \mathscr{P}[g]_{\phi}=\mathscr{P}_{f(\phi)}. 
\]
In particular, for $a+\sqrt{-1}b\in\mathbb C\subset\widetilde{\mathrm{GL}}_2^+(\mathbb R)$, we have
\[
Z[a+\sqrt{-1}b]=e^{-\sqrt{-1}\pi a+\pi b}Z, \quad \mathscr{P}[a+\sqrt{-1}b]_{\phi}=\mathscr{P}_{\phi+a}.
\]

\subsection{Reviews of semiorthogonal decompositions}
Recall that a \emph{semiorthogonal decomposition} of a triangulated category $\mathscr{D}$ is a collection $\mathscr{D}_1,\dots,\mathscr{D}_n$ of full triangulated subcategories such that 
\begin{enumerate}[$(1)$]
\item  $\Hom_{\mathscr{D}}(\mathscr{D}_i,\mathscr{D}_j)=0$ for all $1\le j<i\le n$;
\item $\mathscr{D}=\mathrm{thick}\langle\mathscr{D}_1,\dots,\mathscr{D}_n\rangle$, i.e. $\mathscr{D}$ is the smallest triangulated subcategory containing $\mathscr{D}_1,\dots,\mathscr{D}_n$.
\end{enumerate}
We will use the notation $\mathscr{D}=\langle\mathscr{D}_1,\dots,\mathscr{D}_n\rangle$ to represent a semiorthogonal decomposition of $\mathscr{D}$ with components $\mathscr{D}_1,\dots,\mathscr{D}_n$. A full triangulated subcategory $\mathscr{D}_1\subset\mathscr{D}$ is called \emph{left (right) admissible} if its embedding functor $i\colon\mathscr{D}_1\to\mathscr{D}$ possesses left (right) adjoint functors $i^{\ast} (i^{!})\colon\mathscr{D}\to\mathscr{D}_1$. A left admissible subcategory $\mathscr{D}_1\subset\mathscr{D}$ induces a semiorthogonal decomposition $\mathscr{D}=\langle\mathscr{D}_1, ^{\perp}\mathscr{D}_1\rangle$, where
\[
^{\perp}{\mathscr{D}_1}:={E\in\mathscr{D}\mid\Hom(E,\mathscr{D}_1[n])=0 \text{ for all } n\in\mathbb Z}
\]
denotes the \emph{left orthogonal} to $\mathscr{D}_1$ in $\mathscr{D}$.
A right admissible subcategory $\mathscr{D}_1\subset\mathscr{D}$ leads to a semiorthogonal decomposition $\mathscr{D}=\langle\mathscr{D}_1^{\perp},\mathscr{D}_1\rangle$, with the \emph{right orthogonal} $\mathscr{D}_1^{\perp}$ defined by
\[
\mathscr{D}_1^{\perp}:={E\in\mathscr{D}\mid\Hom(\mathscr{D}_1[n],E)=0 \text{ for all } n\in\mathbb Z}.
\]
A full triangulated category is called \emph{admissible} if it is both left and right admissible. Given a semiorthogonal collection of admissible subcategories $\mathscr{D}_1,\dots,\mathscr{D}_m$ in $\mathscr{D}$, for each $0\le k\le m$, there exists a semiorthogonal decomposition
\[
\mathscr{D}=\langle\mathscr{D}_1,\dots,\mathscr{D}_k, ^{\perp}\langle\mathscr{D}_1,\dots,\mathscr{D}_k\rangle\cap\langle\mathscr{D}_{k+1},\dots,\mathscr{D}_m\rangle^{\perp},\mathscr{D}_{k+1},\dots,\mathscr{D}_m\rangle.
\]
The straightforward example of an admissible subcategory is one generated by an \emph{exceptional} object. An object $E$ is \emph{exceptional} if $\Hom(E,E)=\C$ and $\Hom(E,E[t])=0$ for $t\ne0$. An \emph{exceptional collection} is a collection of exceptional objects ${E_1, E_2, \dots, E_m}$ such that $\Hom(E_i,E_j[t])=0$ for all $i>j$ and all $t\in\mathbb Z$. An exceptional collection in $\mathscr{D}$ yields a semiorthogonal decomposition
\[
\mathscr{D}=\langle\mathscr{A},E_1,\dots,E_m\rangle \quad \text{ with } \mathscr{A}=\langle E_1,\dots,E_m\rangle^{\perp},
\]
where $E_i$ represents the subcategory generated by the same exceptional object. If $\mathscr{A}=0$, then the exceptional collection ${E_1,\dots,E_m}$ is referred to as \emph{full}.

\section{The space of relative stability conditions}
\label{sec: relative-stab}
In this section, we introduce the definition of relative (Bridgeland) stability conditions on triangulated categories with respect to left admissible subcategories based on \cite{MR2373143}. Subsequently, we illustrate the deformation of these relative stability conditions by using the deformation of gluing stability conditions (\cite{Collins2009GluingSC}). Finally, we will provide some initial examples.

\subsection{Relative stability conditions}
Let $\mathscr{D}$ be a triangulated category and $\mathscr{D}_1$ a left admissible subcategory of $\mathscr{D}$. Consider the semiorthogonal decompositions $\mathscr{D}=\langle\mathscr{D}_1, \mathscr{D}_2:=^{\perp}\mathscr{D}_1\rangle$ and denote by $i^{\ast}_1$ the left adjoint functor to the inclusion $i_1\colon \mathscr{D}_0\hookrightarrow\mathscr{D}$ and $i_2^{!}$  the right adjoint functor to the inclusion $i_2\colon\mathscr{D}_2\hookrightarrow\mathscr{D}$. We consider the space of stability conditions on $\mathscr{D}_i$ with respect to $(\Lambda_i,v_i)$. By identifying $K(\mathscr{D})\cong K(\mathscr{D}_1)\oplus K(\mathscr{D}_2)$ and defining $\Lambda=\Lambda_1\oplus\Lambda_2$, we get a homomorphism $v:=v_1\oplus v_2\colon K(D)\to\Lambda$. The space of stability conditions on $\Stab(\mathscr{D})$ is then considered with respect to $(\Lambda,v)$. Throughout this paper, we consistently assume the existence of stability conditions on the semiorthogonal components of $\mathscr{D}$.

\begin{defnsub}
A stability condition $\sigma=(Z,\mathscr{P})$ on $\mathscr{D}$ is called \emph{glued from} $\sigma_1=(Z_1,\mathscr{P}_1)\in\Stab(\mathscr{D}_1)$ and $\sigma_2=(Z,\mathscr{P}_2)\in\Stab(\mathscr{D}_2)$ if 
\begin{enumerate}[$(1)$]
\item $Z_i=Z\vert_{\Lambda_i}$, for $i=1,2$;
\item $\mathscr{P}_i(\phi)\subset\mathscr{P}(\phi)$, for any $\phi\in\mathbb R$ and $i=1,2$.
\end{enumerate} 
\end{defnsub}

\begin{lemmasub}
\label{expressheart}
Let $\sigma=(Z,\mathscr{P})$ be a stability condition glued from $(\sigma_1=(Z_1,\mathscr{P}_1),\sigma_2=(Z_2,\mathscr{P}_2))\in\Stab(\mathscr{D}_1)\times\Stab(\mathscr{D}_2)$. Then we have $Z=Z_1\circ i^{\ast}_1+Z_2\circ i^{!}_2$
and
\[\mathscr{P}((0,1])=\left\{X\in \mathscr{D}\mid i^{\ast}_1(X)\in\mathscr{P}_1((0,1]), i^{!}_2(X)\in\mathscr{P}_2((0,1])\right\}.
\]
\end{lemmasub}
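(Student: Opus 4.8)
The plan is to prove the two identities in turn, both by means of the canonical triangle attached to the semiorthogonal decomposition $\mathscr{D}=\langle\mathscr{D}_1,\mathscr{D}_2\rangle$. For the central charge: the unit $X\to i_1i_1^{\ast}X$ of $i_1^{\ast}\dashv i_1$ has cone in ${}^{\perp}\mathscr{D}_1=\mathscr{D}_2$, hence canonically isomorphic to $(i_2i_2^{!}X)[1]$, so there is a functorial triangle
\[
i_2i_2^{!}X\longrightarrow X\longrightarrow i_1i_1^{\ast}X\xrightarrow{+1}.
\]
Under the identification $K(\mathscr{D})\cong K(\mathscr{D}_1)\oplus K(\mathscr{D}_2)$ this reads $[X]=[i_1^{\ast}X]+[i_2^{!}X]$, and applying $Z$ together with the gluing hypothesis $Z_i=Z|_{\Lambda_i}$ gives $Z=Z_1\circ i_1^{\ast}+Z_2\circ i_2^{!}$.

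For the heart I would first record two bookkeeping facts. Because $\mathscr{P}_i(\phi)\subset\mathscr{P}(\phi)$, uniqueness of Harder--Narasimhan filtrations forces the $\sigma_i$-HN filtration of any object of $\mathscr{D}_i$ to be its $\sigma$-HN filtration; hence $\phi^{\pm}_{\sigma}=\phi^{\pm}_{\sigma_i}$ on $\mathscr{D}_i$, and writing $\mathscr{A}:=\mathscr{P}((0,1])$, $\mathscr{A}_i:=\mathscr{P}_i((0,1])$ we get $\mathscr{A}_i=\mathscr{D}_i\cap\mathscr{A}$. Also $i_1^{\ast}$ kills $\mathscr{D}_2$ (by definition of ${}^{\perp}\mathscr{D}_1$) and $i_2^{!}$ kills $\mathscr{D}_1$ (since $\mathscr{D}_1\subseteq\mathscr{D}_2^{\perp}$, which for a two-term decomposition is an equality). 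The inclusion $\supseteq$ of the formula is then immediate: if $i_1^{\ast}X\in\mathscr{A}_1$ and $i_2^{!}X\in\mathscr{A}_2$, the outer terms of the triangle above lie in $\mathscr{A}$, so the triangle presents $X$ as an extension of objects of the heart, whence $X\in\mathscr{A}$.

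For the reverse inclusion it suffices to show that, for $X\in\mathscr{A}$, the semiorthogonal triangle has all three terms in $\mathscr{A}$ — equivalently that $(\mathscr{A}_2,\mathscr{A}_1)$ (note $\Hom_{\mathscr{D}}(\mathscr{A}_2,\mathscr{A}_1)=0$) is a torsion pair on $\mathscr{A}$. Two of the needed phase bounds come quickly from the adjunctions: the truncation of $i_1^{\ast}X$ onto its part of phase $\le 0$ would, by $i_1^{\ast}\dashv i_1$, produce a nonzero map from $X$ into an object of phase $\le 0$, contradicting $\phi^{-}_{\sigma}(X)>0$, so $\phi^{-}_{\sigma_1}(i_1^{\ast}X)>0$; dually $\phi^{+}_{\sigma_2}(i_2^{!}X)\le 1$. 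Feeding these into the long exact cohomology sequence of the triangle (taken with respect to the bounded $t$-structure with heart $\mathscr{A}$, and using $X\in\mathscr{A}$) one finds that $i_1i_1^{\ast}X$ and $i_2i_2^{!}X$ have cohomology concentrated in two consecutive degrees, and applying $i_1^{\ast}$ and $i_2^{!}$ to the resulting short exact sequence reduces everything to the torsion-pair statement. This last point is exactly the content of the gluing construction of \cite{Collins2009GluingSC}: for a stability condition glued from $\sigma_1,\sigma_2$ the heart carries the torsion pair $(\mathscr{A}_2,\mathscr{A}_1)$ and equals $\langle\mathscr{A}_2,\mathscr{A}_1\rangle$. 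Invoking it gives $\mathscr{A}=\langle\mathscr{A}_2,\mathscr{A}_1\rangle$, which is precisely the asserted description of $\mathscr{P}((0,1])$.

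I expect the reverse inclusion — concretely, the torsion-pair property — to be the only real obstacle: the central-charge identity and the inclusion $\supseteq$ are formal, and the phase-bound and cohomology steps above are routine, but trying to close the argument entirely by hand runs into the circularity typical of torsion-pair constructions (every attempt to deduce closure of $\mathscr{A}_1$ under sub/quotients, or of $\mathscr{A}_2$, seems to presuppose the statement). The efficient route is therefore to cite the gluing theory of \cite{Collins2009GluingSC}, where under the running hypotheses this torsion pair is produced, rather than to re-derive it.
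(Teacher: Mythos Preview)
Your treatment of the central charge and of the inclusion $\supseteq$ matches the paper's and is correct. For the reverse inclusion, however, you work much harder than necessary and the final appeal to \cite{Collins2009GluingSC} does not quite deliver what you claim. The gluing theorem there (reproduced in the paper as Theorem~\ref{gluestab}) \emph{constructs} a glued stability condition under extra orthogonality hypotheses on the slicings (the conditions $\Hom^{\le 0}(\mathscr{P}_1((0,1]),\mathscr{P}_2((0,1]))=0$, etc.), and its heart is $\langle\mathscr{A}_2,\mathscr{A}_1\rangle$ by construction. In the present lemma you are only given a stability condition satisfying the paper's weak notion of ``glued'' ($Z_i=Z|_{\Lambda_i}$ and $\mathscr{P}_i(\phi)\subset\mathscr{P}(\phi)$), with no such orthogonality assumed; so you cannot cite that result to identify the heart, and the torsion-pair statement you attribute to Collins--Polishchuk is in fact precisely the lemma you are trying to prove.

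The paper sidesteps the circularity you describe with a much cheaper observation. The right-hand side $\{X\in\mathscr{D}\mid i_1^{\ast}X\in\mathscr{A}_1,\ i_2^{!}X\in\mathscr{A}_2\}$ is \emph{itself} the heart of a bounded $t$-structure on $\mathscr{D}$: it is the glued $t$-structure obtained from the bounded (non-degenerate) $t$-structures with hearts $\mathscr{A}_1,\mathscr{A}_2$ on the semiorthogonal components. This is a statement purely about $t$-structures---it needs neither central charges nor the Collins--Polishchuk orthogonality conditions---and is the BBD construction (or Proposition~2.2 of \cite{Collins2009GluingSC}). You have already shown this heart is contained in the heart $\mathscr{P}((0,1])$; but any inclusion between hearts of bounded $t$-structures is an equality, and the lemma follows. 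Your adjunction-based phase bounds and the long exact sequence manoeuvre are then unnecessary.
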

\begin{proof}
Note that every object $X\in\mathscr{D}$ fits into an exact sequence in $\mathscr{D}$
\[
0\to i^{!}_2(X)\to X\to i^{\ast}_1(X)\to0,
\]
which implies that $[E]=[i^{\ast}_1(E)]+[i^{!}_2(E)]$. Hence, we obtain $Z=Z_1\circ i^{\ast}_1+Z_2\circ i^{!}_2$. Furthermore, we deduce that $\langle\mathscr{P}_2((0,1),\mathscr{P}_1((0,1])\rangle$ forms a torsion pair in $\mathscr{P}((0,1])$. Therefore, 
\[
\left\{X\in \mathscr{D}\mid i^{\ast}_1(X)\in\mathscr{P}_1((0,1]), i^{!}_2(X)\in\mathscr{P}_2((0,1])\right\}=\langle\mathscr{P}_2((0,1),\mathscr{P}_1((0,1])\rangle\subset\mathscr{P}((0,1]).
\]
Since $\mathscr{P}_1((0,1])$ and $\mathscr{P}_2((0,1])$ are hearts of non-degenerate $t$-structures, we have $\mathscr{P}((0,1])=\langle\mathscr{P}_2((0,1),\mathscr{P}_1((0,1])\rangle$.
\end{proof}

Now we revisit the construction of gluing stability conditions on $\mathscr{D}$ derived from $\mathscr{D}_1$ and $\mathscr{D}_2$ in \cite{Collins2009GluingSC}, and a crucial ingredient of the construction is the notion of \emph{reasonable} stability conditions. 
\begin{defnsub}[\cite{Collins2009GluingSC}]
A stability condition $\sigma=(Z,H)\in\Stab(\mathscr{D})$ is called \emph{reasonable} if it satisfies
\[
\inf \left\{\vert Z(v(E))\vert\in\mathbb R\mid E\ne0 \text{ is semistable in } \sigma\right\}>0.
\]
\end{defnsub}

\begin{lemmasub}[{\cite{MR2376815,BayerMacri09,2021arXiv210713367K}}]
\label{equi-support}
Let $\sigma=(Z,\mathscr{P})$ be a pre-stability condition on a triangulated $\mathscr{D}$. The following conditions are equivalent:
\begin{enumerate}[$(1)$]
\item $\sigma$ is full and reasonable;
\item $\sigma$ is full and locally finite;
\item $\sigma$ satisfies the support property, i.e. $\sigma\in\Stab_{\Lambda}(\mathscr{D})$.
\end{enumerate}
\end{lemmasub}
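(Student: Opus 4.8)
The plan is to establish the cycle of implications $(3)\Rightarrow(1)\Rightarrow(2)\Rightarrow(3)$. Two observations are used throughout: first, for a non-zero semistable $E$ one has $Z(v(E))=m_\sigma(E)e^{\sqrt{-1}\pi\phi}$ with $m_\sigma(E)>0$, so $v(E)$ is a \emph{non-zero} element of the lattice $\Lambda$; second, $\Lambda$ is discrete in $\Lambda_{\mathbb R}$, so $\lambda_0:=\min\{\Vert\gamma\Vert:0\neq\gamma\in\Lambda\}>0$. For $(3)\Rightarrow(1)$: if the support property holds with constant $C$, then for every non-zero semistable $E$ we get $|Z(v(E))|\geq\Vert v(E)\Vert/C\geq\lambda_0/C>0$, which is exactly reasonableness. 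Fullness then follows from the refined deformation theory: a stability condition satisfying the support property with respect to a quadratic form $Q$ lies in the locus $\mathrm{Stab}_Q(\mathscr D)$, which is a union of connected components of $\mathrm{Stab}(\mathscr D)$ on which $\cZ$ restricts to a covering of $\Hom_{\mathbb Z}(\Lambda,\mathbb C)$; in particular $\cZ$ is surjective on the component of $\sigma$, so that component is full (\cite{BayerMacri09,2021arXiv210713367K}).

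For $(1)\Rightarrow(2)$, fullness appears on both sides, so it suffices to show a reasonable $\sigma$ is locally finite. Set $c:=\inf\{|Z(v(F))|:F\neq0\text{ semistable}\}>0$; since every non-zero object has at least one Harder--Narasimhan factor, $m_\sigma(E)\geq c$ for all $E\neq0$. Fix $\phi_0$, and after acting by $\widetilde{\mathrm{GL}}^+_2(\mathbb R)$ assume $\phi_0=\tfrac12$; fix $0<\epsilon<\tfrac12$. For any $E\in\mathscr{P}((\tfrac12-\epsilon,\tfrac12+\epsilon))$, summing $\mathrm{Im}\,Z(v(\cdot))$ over the HN factors gives $\mathrm{Im}\,Z(v(E))\geq\cos(\pi\epsilon)\,m_\sigma(E)\geq\cos(\pi\epsilon)\,c>0$; since $\mathrm{Im}\,Z\circ v$ is additive on admissible short exact sequences of the quasi-abelian category $\mathscr{P}((\tfrac12-\epsilon,\tfrac12+\epsilon))$, any strictly ascending or descending chain of strict subobjects of $E$ has length at most $\mathrm{Im}\,Z(v(E))/(c\cos(\pi\epsilon))$. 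Hence $\mathscr{P}((\tfrac12-\epsilon,\tfrac12+\epsilon))$ is of finite length, so $\sigma$ is locally finite.

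For $(2)\Rightarrow(3)$ — the substantive implication and the point where I expect the main difficulty — one must upgrade the purely \emph{qualitative} finiteness in local finiteness to the \emph{uniform quantitative} bound of the support property, and fullness is essential here, since local finiteness alone does not preclude an infinite family of stable objects whose classes escape to infinity along $\Ker Z$. The approach is by contradiction: given semistable $E_n$ with $\Vert v(E_n)\Vert/|Z(v(E_n))|\to\infty$, the fact that $v(E_n)\in\Lambda$ forces $\Vert v(E_n)\Vert\to\infty$ and, after passing to a subsequence, $v(E_n)/\Vert v(E_n)\Vert\to\bar\gamma$ with $Z(\bar\gamma)=0$; using fullness to move $Z$ along a path in $\Hom_{\mathbb Z}(\Lambda,\mathbb C)$ to some $Z'$ with $Z'(\bar\gamma)\neq0$, lifting the path via \Cref{deformation}, and tracking the behaviour of the $E_n$ along the lift, one derives a contradiction. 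Equivalently, one constructs a quadratic form $Q$ on $\Lambda_{\mathbb R}$, negative definite on $\Ker Z$, with $Q(v(E))\geq0$ for every $\sigma$-semistable $E$, using that $\cZ$ covers a full neighbourhood of $Z$. The first two implications are elementary; the real content of the lemma is this last step, for whose details I would follow \cite{MR2376815,BayerMacri09,2021arXiv210713367K}.
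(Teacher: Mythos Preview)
Your proposal is correct, but the route differs from the paper's on the key equivalence between fullness and the support property.

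The paper works through Bridgeland's norm characterisation of fullness: $\sigma$ is full precisely when $\|W\|_\sigma:=\sup\{|W(v(E))|/|Z(v(E))|:E\ \sigma\text{-semistable}\}$ is finite for every $W$, hence comparable to the operator norm on the finite-dimensional space $\Hom_{\mathbb Z}(\Lambda,\mathbb C)$. With this in hand both directions between fullness and support become short and elementary. For $(2)\Rightarrow(3)$ (which in the paper uses only fullness, not local finiteness): if support fails, take semistable $E_n$ with $|Z(v(E_n))|<\|v(E_n)\|/n$ and unit functionals $W_n$ achieving $|W_n(v(E_n))|=\|v(E_n)\|$; then $\|W_n\|_\sigma>n$, contradicting the norm equivalence. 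For $(3)\Rightarrow(1)$: the support inequality gives $\|W\|_\sigma\le C\sup_{\gamma\ne0}|W(\gamma)|/\|\gamma\|<\infty$, hence fullness; reasonableness follows from lattice discreteness, as you also observe.

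You instead appeal to the heavier $\Stab_Q$ deformation theory for $(3)\Rightarrow\text{full}$, and for $(2)\Rightarrow(3)$ sketch a geometric contradiction (extract a limit direction $\bar\gamma\in\Ker Z$, deform $Z$ off the kernel, track the $E_n$) whose details you defer to the references. This is valid but less self-contained. One small imprecision: $\Stab_Q$ is not in general a union of connected components of $\Stab(\mathscr D)$, and $\cZ$ covers only the open set $\{W:Q|_{\Ker W}<0\}$ rather than all of $\Hom_{\mathbb Z}(\Lambda,\mathbb C)$ --- but this still yields fullness in the sense of maximal dimension, so your conclusion stands.

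Conversely, your $(1)\Rightarrow(2)$ is cleaner than the paper's. You bound chain lengths in $\mathscr{P}((\tfrac12-\epsilon,\tfrac12+\epsilon))$ directly via the additivity and uniform positivity of $\Im Z\circ v$, showing that \emph{reasonableness alone} implies local finiteness. The paper instead first invokes fullness to deform to a discrete stability condition, then argues that central charges of subobjects lie in a bounded discrete set --- an unnecessary detour.
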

\begin{proof}
$(1)\Rightarrow(2)$: Consider $\mathscr{A}:=\mathscr{P}((\phi-\epsilon,\phi+\epsilon))$ for some $\epsilon\in(0,\frac{1}{2})$. Since $\sigma$ is full, we can deform $\sigma$ to a discrete stability condition $\sigma'=(Z'\mathscr{P}')$ such that $\mathscr{A}\subset\mathscr{P}'(((\phi-\epsilon',\phi+\epsilon')))$, for some $\epsilon'\in[\epsilon,\frac12)$. Hence we may assume that $\sigma$ is discrete. Define a function $h\colon\mathscr{A}\to\mathbb R$ by $E\mapsto\Re(\mathrm{exp}(-\sqrt{-1}\pi\phi)Z(v(E)))$. As $\sigma$ is reasonable, there exists a constant $c>0$ with $h(A)>c$ for $0\ne A\in\mathscr{A}$. Note that the function $h$ is additive for strict short exact sequences. For any $E\in\mathscr{A}$, the central charges of all subobjects and quotient objects of $E$ lie within a bounded region of $\mathbb C$, which implies that $\sigma$ is locally finite.

$(2)\Rightarrow(3)$: Note that fullness of $\sigma$ is equivalent to the existence of $C>0$ such that for any $W\in\Hom_{\mathbb Z}(\Lambda,\mathbb C)$, we have 
\[
\sup\left\{\frac{\vert W(v(E))\vert}{\vert Z(v(E))\vert}\colon 0\ne E \text{ is } \sigma\text{-semistable } \right\}\le C.
\]
Assume that $\sigma$ does not satisfy the support property. Then there is a sequence $\{E_n\}$ of $\sigma$-semistable objects such that $\vert Z(v(E_n))\vert<\frac{\Vert v(E_n)\Vert}{n}$. Thus, for any $W\in\Hom_{\mathbb Z}(\Lambda,\mathbb C)$, we have
\[
\sup\left\{\frac{\vert W(v(E))\vert}{\vert Z(v(E))\vert}\colon 0\ne E \text{ is } \sigma\text{-semistable } \right\}\ge\frac{\vert W(v(E_n))\vert}{\vert Z(v(E_n))\vert}>n\cdot\frac{\vert W(v(E_n))\vert}{\Vert v(E_n)\Vert}.
\]
We can choose a sequence $\{W_n\}\in\Hom_{\mathbb Z}(\Lambda,\mathbb C)$ such that $\frac{\vert W_n(v(E_n))\vert}{\Vert v(E_n)\Vert}=1$, which contradicts the fullness of $\sigma$. Therefore, we conclude that $\sigma$ satisfies the support property.

$(3)\Rightarrow(1)$: Suppose $\sigma$ satisfies the support property. Then there is a constant $C>0$ such that for any $W\in\Hom_{\mathbb Z}(\Lambda,\mathbb C)$, we have
\[
\sup\left\{\frac{\vert W(v(E))\vert}{\vert Z(v(E))\vert}\colon 0\ne E \text{ is } \sigma\text{-semistable } \right\}\le C\cdot\sup\left\{\frac{\vert W(v(E))\vert}{\Vert v(E)\Vert}\colon 0\ne E \text{ is } \sigma\text{-semistable } \right\}.
\]
Since $\Lambda$ is of finite rank, it follows that $\sigma$ is full. By choosing a norm $\Vert\cdot\Vert$ such that $\Vert v\Vert\ge1$ for all $v\in\Lambda$, we can ensure that $\sigma$ is reasonable.
\end{proof}

\begin{corsub}
Let $\Sigma$ be a connected component of the space of pre-stability conditions on $\mathscr{D}$ that includes a stability condition $\sigma\in\Stab(\mathscr{D})$. Then $\Sigma\subset\Stab(\mathscr{D})$.
\end{corsub}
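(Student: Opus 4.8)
The plan is to use the generalized metric $d$ recalled above, observing that it is defined not just on $\Stab(\mathscr{D})$ but on the whole space of pre-stability conditions on $\mathscr{D}$ — and induces its topology — since the quantities $\phi^{\pm}_{\sigma}(E)$ and $m_{\sigma}(E)$ only rely on the Harder--Narasimhan filtrations provided by the slicing axioms. The first step is the elementary observation that in any generalized metric space the relation ``$d(\sigma,\tau)<\infty$'' is an equivalence relation, and the class of $\sigma$ equals $\bigcup_{n\ge 1}\{\tau\mid d(\sigma,\tau)<n\}$, a union of open balls, hence open; its complement is a union of other such classes, so the class is also closed. A connected set meeting this clopen class is contained in it, so $d(\sigma,\tau)<\infty$ for every $\tau\in\Sigma$.

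The second and main step transfers the support property from $\sigma=(Z,\mathscr{P})$ to an arbitrary $\tau=(W,\mathscr{Q})\in\Sigma$. Fix a support constant $C>0$ for $\sigma$ with respect to $(\Lambda,v)$, and let $0\ne E\in\mathscr{D}$ be $\tau$-semistable. Take the Harder--Narasimhan filtration of $E$ with respect to $\sigma$, with $\sigma$-semistable factors $A_1,\dots,A_m$. The filtration triangles give $[E]=\sum_i[A_i]$ in $K(\mathscr{D})$, hence $v(E)=\sum_i v(A_i)$ in $\Lambda$, so
\[
\Vert v(E)\Vert\le\sum_i\Vert v(A_i)\Vert\le C\sum_i\vert Z(v(A_i))\vert=C\,m_{\sigma}(E).
\]
Since $E$ is $\tau$-semistable, $m_{\tau}(E)=\vert W(v(E))\vert$, while the definition of $d$ gives $m_{\sigma}(E)\le e^{d(\sigma,\tau)}m_{\tau}(E)$. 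Combining, $\Vert v(E)\Vert\le Ce^{d(\sigma,\tau)}\vert W(v(E))\vert$, so $\tau$ satisfies the support property with constant $Ce^{d(\sigma,\tau)}$, i.e. $\tau\in\Stab(\mathscr{D})$. As $\tau\in\Sigma$ was arbitrary, $\Sigma\subset\Stab(\mathscr{D})$.

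The only real content lies in the first step: once one knows $d(\sigma,\tau)<\infty$ throughout $\Sigma$, the support-property transfer is a one-line estimate, so I do not expect a genuine obstacle. The points needing care — that finite-radius metric balls are open (so finite-distance classes are clopen and swallow connected components), that the $\sigma$-Harder--Narasimhan factors of a nonzero object are $\sigma$-semistable and additive in $K(\mathscr{D})$, and that the mass of a semistable object is the modulus of its central charge — are all immediate from the definitions recalled above. A less direct alternative would be to invoke \Cref{equi-support}: $\tau\in\Stab(\mathscr{D})$ iff $\tau$ is full and locally finite, and this pair of conditions is both open (small deformations preserve local finiteness, and fullness is constant on connected components via \Cref{deformation}) and closed on the space of pre-stability conditions, whence $\Stab(\mathscr{D})$ is a union of connected components; the metric estimate above, however, sidesteps this.
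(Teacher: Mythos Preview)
Your argument is correct. The paper takes a different route: it invokes \Cref{equi-support} to translate the support property into ``full and reasonable'', then cites \cite[Proposition 1.2]{Collins2009GluingSC} to assert that these two properties propagate along the entire connected component $\Sigma$, and finally applies \Cref{equi-support} again to convert back. Your proof bypasses both ingredients: instead of passing through the equivalence with fullness/reasonableness, you transfer the support constant directly via the mass comparison $m_{\sigma}(E)\le e^{d(\sigma,\tau)}m_{\tau}(E)$, and instead of citing Collins--Polishchuk for propagation, you use only the clopenness of finite-distance classes in a generalized metric space. What you gain is a fully self-contained and elementary argument that even produces an explicit support constant $Ce^{d(\sigma,\tau)}$ for $\tau$; what the paper's route gains is brevity, since the heavy lifting is packaged in the cited results. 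Your closing remark already sketches (a variant of) the paper's approach as the ``less direct alternative'', so you have in effect identified both proofs.
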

\begin{proof}
According to Lemma \ref{equi-support} and \cite[Proposition 1.2]{Collins2009GluingSC}, every pre-stability condition in $\Sigma$ is both full and reasonable. The claim then follows from Lemma \ref{equi-support}.
\end{proof}

\begin{thmsub}[{\cite[Theorem 3.6]{Collins2009GluingSC}}]
\label{gluestab}
Let $\langle\mathscr{D}_1,\mathscr{D}_2\rangle$ be a semiorthogonal decomposition of a triangulated category $\mathscr{D}$. Let $\sigma_i=(Z_i,\mathscr{P}_i)$ be a reasonable stability condition on $\mathscr{D}_i$ for $i=1,2$. Suppose that $\sigma_1$ and $\sigma_2$ satisfy the following conditions
\begin{enumerate}[$(1)$]
\item $\Hom^{\le0}_{\mathscr{D}}(\mathscr{P}_1((0,1]),\mathscr{P}_2((0,1]))=0$;
\item There is a real number $a\in(0,1)$ such that $\Hom^{\le0}_{\mathscr{D}}(\mathscr{P}((a,a+1]),\mathscr{P}((a,a+1]))=0$.
\end{enumerate}
Then there exists a unique reasonable stability condition $\mathrm{gl}(\sigma_1,\sigma_2)$ on $\mathscr{D}$ glued from $\sigma_1$ and $\sigma_2$.
\end{thmsub}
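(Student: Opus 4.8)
The plan is to produce $\mathrm{gl}(\sigma_1,\sigma_2)$ through Bridgeland's description of a stability condition as a bounded $t$-structure heart together with a compatible central charge (\cite[Proposition 5.3]{MR2373143}). Write $\mathscr{A}_i:=\mathscr{P}_i((0,1])$ for the heart attached to $\sigma_i$, and set
\[
\mathscr{A}:=\{X\in\mathscr{D}\mid i_1^{\ast}(X)\in\mathscr{A}_1,\ i_2^{!}(X)\in\mathscr{A}_2\},\qquad Z:=Z_1\circ i_1^{\ast}+Z_2\circ i_2^{!}\colon\Lambda\to\mathbb C,
\]
the latter being exactly the central charge that Lemma \ref{expressheart} forces on any stability condition glued from $\sigma_1,\sigma_2$. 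The first step is to show that condition $(1)$ — the vanishing $\Hom^{\le 0}_{\mathscr{D}}(\mathscr{A}_1,\mathscr{A}_2)=0$ together with the semiorthogonality $\Hom_{\mathscr{D}}(\mathscr{D}_2,\mathscr{D}_1)=0$ that is built into the decomposition — makes $\mathscr{A}$ the heart of a bounded $t$-structure on $\mathscr{D}$, by the standard gluing of $t$-structures along a semiorthogonal decomposition (cf. \cite{MR0751966}); in $\mathscr{A}$ the functorial triangle $i_2^{!}(X)\to X\to i_1^{\ast}(X)$ then exhibits $(\mathscr{A}_2,\mathscr{A}_1)$ as a torsion pair with torsion class $\mathscr{A}_2$ and torsion-free class $\mathscr{A}_1$, as in the proof of Lemma \ref{expressheart}.

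Next one checks that $Z$ is a stability function on $\mathscr{A}$: for $0\ne X\in\mathscr{A}$ one has $Z(X)=Z_2(i_2^{!}X)+Z_1(i_1^{\ast}X)$ with each summand in $\mathbb H_+\cup\{0\}$ and not both zero (otherwise $X=0$ by the triangle above), so $Z(X)\in\mathbb H_+$ because $\mathbb H_+$ is closed under addition. The substantial point — the one I expect to be the main obstacle — is the Harder--Narasimhan property for $(Z,\mathscr{A})$. Concatenating the $\sigma_2$-HN filtration of $i_2^{!}X$ inside $\mathscr{A}_2$ with the preimage of the $\sigma_1$-HN filtration of $i_1^{\ast}X$ inside $\mathscr{A}_1$ gives a finite filtration of $X$ with $Z$-semistable subquotients, but their phases need not be decreasing, so one must instead rule out infinite descending chains of destabilising subobjects. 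Here the reasonableness of $\sigma_1$ and $\sigma_2$ is essential, since it bounds the masses of semistable objects away from $0$, and here condition $(2)$ — a further $\Hom^{\le 0}$-vanishing at a rotation angle $a\in(0,1)$ — is what transports the Harder--Narasimhan property and the slicing axioms from the interval $(0,1]$ to every length-one interval. With HN established, \cite[Proposition 5.3]{MR2373143} yields a pre-stability condition $\sigma=(Z,\mathscr{P})$ on $\mathscr{D}$ with $\mathscr{P}((0,1])=\mathscr{A}$.

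It then remains to verify that $\sigma$ is glued from $\sigma_1,\sigma_2$, that it is reasonable, and that it is unique. For gluedness, $Z|_{\Lambda_i}=Z_i$ is immediate, and $\mathscr{P}_i(\phi)\subset\mathscr{P}(\phi)$ reduces, after using $\mathscr{P}(\phi+1)=\mathscr{P}(\phi)[1]$, to the claim that a $\sigma_i$-semistable object of phase in $(0,1]$ stays $Z$-semistable in $\mathscr{A}$: for $E\in\mathscr{A}_1$ every $\mathscr{A}$-subobject of $E$ already lies in $\mathscr{A}_1$ (using $i_2^{!}E=0$ and $\mathscr{A}_2\cap\mathscr{A}_2[-1]=0$), so this follows from $\sigma_1$-semistability, and for $E\in\mathscr{A}_2$ one compares the torsion part and torsion-free quotient of a subobject with $E$ using condition $(1)$. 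Reasonableness of $\sigma$ follows from that of $\sigma_1,\sigma_2$, since up to the torsion filtration every $\sigma$-semistable object is built from $\sigma_1$- or $\sigma_2$-semistable pieces, so $|Z(v(E))|$ stays bounded below. Finally, for uniqueness, any reasonable $\sigma'=(Z',\mathscr{P}')$ glued from $\sigma_1,\sigma_2$ satisfies $Z'=Z_1\circ i_1^{\ast}+Z_2\circ i_2^{!}=Z$ and $\mathscr{P}'((0,1])=\langle\mathscr{A}_2,\mathscr{A}_1\rangle=\mathscr{A}$ by Lemma \ref{expressheart}, and since a stability condition is determined by its heart and its central charge, $\sigma'=\sigma$.
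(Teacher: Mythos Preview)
The paper does not prove this statement: Theorem \ref{gluestab} is quoted from \cite[Theorem 3.6]{Collins2009GluingSC} and invoked as a black box (for instance in the proof of Proposition \ref{redundant}), so there is no proof in the paper to compare your proposal against.

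As a sketch of the Collins--Polishchuk argument your outline is broadly on target: glue the hearts via condition $(1)$ and the BBD recollement, check that $Z$ lands in $\mathbb H_+$, establish the Harder--Narasimhan property, then read off gluedness and uniqueness; your uniqueness argument via Lemma \ref{expressheart} is exactly right, and your verification that $\sigma_i$-semistables remain $Z$-semistable in $\mathscr{A}$ is correct. Two places are thinner than they sound. First, for the HN step you correctly flag it as the main obstacle, but your description of how condition $(2)$ is used (``transports the HN property from $(0,1]$ to every length-one interval'') is not the mechanism: in \cite{Collins2009GluingSC} the second vanishing produces a \emph{second} glued heart at angle $a$, and the compatibility of the two tilted hearts is what drives the finiteness argument. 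Second, your reasonableness argument has a gap: for a $\sigma$-semistable $E$ the torsion pieces $E_2\in\mathscr{A}_2$ and $E_1\in\mathscr{A}_1$ need not be $\sigma_i$-semistable, and even after refining to their $\sigma_i$-HN filtrations the equality $Z(E)=\sum_j Z(A_j)$ with each $Z(A_j)\in\mathbb H_+$ does not by itself bound $|Z(E)|$ away from zero --- one needs the phase constraints $\phi(E_2)\le\phi(E)\le\phi(E_1)$ forced by semistability of $E$ together with a geometric argument in the upper half-plane.
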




\begin{defnsub}
\label{def-ref}
A relative pre-stability condition $\sigma_{\r}=(Z,\mathscr{P}_1)$ on $\mathscr{D}$ with respect to a left admissible subcategory $\mathscr{D}_1$ is defined by a group homomorphism $Z\colon \Lambda\to\mathbb C$ and a slicing $\mathscr{P}_1$ of $\mathscr{D}_1$ that fulfills the following conditions:
\begin{enumerate}[$(1)$]
\item $(Z,\mathscr{P}_1)$ can be \emph{relatively extended} to a pre-stability condition ${\sigma}=(Z,{\mathscr{P}})$ on $\mathscr{D}$, i.e. $\mathscr{P}_1(\phi)\subset{\mathscr{P}}(\phi)$ for any $\phi\in\mathbb R$, and $(Z_2,\mathscr{P}_2):=(Z\vert_{\Lambda_2},\mathscr{P}\cap\mathscr{D}_2)\in\Stab(\mathscr{D}_2)$;

\item There exist $0<\epsilon<\delta<1$ such that for any heart $\mathscr{A}_i\in\mathscr{P}_i((\delta-\epsilon,1+\delta+\epsilon))$ of a bounded $t$-structure of $\mathscr{D}_i$,
\[
\langle\mathscr{A}_2,\mathscr{A}_1\rangle:=\{X\in\mathscr{D}\mid i_1^{\ast}(X)\in\mathscr{A}_1, i^{!}_{2}(X)\in\mathscr{A}_2\}
\]
constitutes a heart of a bounded $t$-structure in $\mathscr{D}$.
\end{enumerate}
\end{defnsub}

Note that the relatively extended pre-stability condition $(Z,\mathscr{P})$ of $(Z,\mathscr{P}_1)$ is not unique in general. By definition, if $0\ne E\in\mathscr{P}_1(\phi)$, then $Z(v(E))=m(E)\mathrm{exp}(\sqrt{-1}\pi\phi)$ for some $m(E)\in\mathbb R_{>0}$. The linear map $Z\colon \Lambda\to\mathbb C$ will be referred as the \emph{relative central charge} of $\sigma$ and $\mathscr{P}_1$ is termed the \emph{relative slicing}  of $\sigma$. Objects in $\mathscr{P}_1(\phi)$ are considered \emph{relatively} $\sigma$-\emph{semistable} in $\sigma$ of phase $\phi$, and the simple objects of $\mathscr{P}_1$ are said to be \emph{relatively} $\sigma$-\emph{stable}. Moreover, if $(Z,\mathscr{P})$ from Definition \ref{def-ref} satisfies the support property with respect to $(\Lambda,v)$ as described in Definition \ref{def: support}, we denote $(Z,\mathscr{P}_1)$ as a relative stability condition on $\mathscr{D}$ with respect to $\mathscr{D}_1$. Denote by $\Stab(\mathscr{D},\mathscr{D}_1)$ the set of relative stability conditions with respect to $(\Lambda,v)$. As a corollary, for $\sigma_{\r}\in\Stab(\mathscr{D},\mathscr{D}_1)$, there is a constant $C>0$ such that for all relatively $\sigma$-semistable objects $0\ne E\in\mathscr{D}$, we have 
\[
\Vert v(E)\Vert\le C\vert Z(v(E))\vert.
\]

Similar to \cite[Proposition 5.3]{MR2373143}, we have the following result:
\begin{propsub}
\label{equi}
To give a relative stability condition $\sigma_{\r}=(Z,\mathscr{P}_1)$ on $\mathscr{D}$ with respect to $\mathscr{D}_1$ is equivalent to providing the following:
\begin{enumerate}[$(1)$]
\item A heart $\mathscr{A}_1$ of a bounded $t$-structure on $\mathscr{D}_1$ and a group homomorphism $Z\colon\Lambda\to\mathbb C$ such that 
\begin{enumerate}
\item $Z\circ v$ defines a stability function on a heart $\mathscr{A}$, containing $\mathscr{A}_1$, of a bounded $t$-structure on $\mathscr{D}$  and possesses the Harder-Narasimhan property;
\item $\mathscr{A}_2:=\cap\mathscr{D}_2$ is a heart of a bounded $t$-structure on $\mathscr{D}_2$, and $Z\vert_{\mathscr{A}_2}$-semistable objects in $\mathscr{A}_2$ are also $Z$-semistable.
\end{enumerate}[$(1)$]
\item $0<\epsilon<\delta<1$ such that for any heart $\mathscr{A}_i\subset\mathscr{A}_{i}(\epsilon,\delta)$, where $\mathscr{A}_{i}(\epsilon,\delta)$ is the extension-closed subcategory of $\mathscr{D}_i$ generated by  
\[
\left\{E\in\mathscr{D}_i\colon
\begin{aligned} 
E \text{ is } Z\text{-semistable with } \phi(E)\in(\delta-\epsilon,1] \emph{ or }\\
 E[-1] \text{ is } Z\text{-semistable with } \phi(E)\in(0,\delta+\epsilon)
\end{aligned}
\right\},
\]
the subset 
\[
\langle\mathscr{A}_2,\mathscr{A}_1\rangle:=\{X\in\mathscr{D}\mid i_1^{\ast}(X)\in\mathscr{A}_1, i^{!}_{2}(X)\in\mathscr{A}_2\}
\]
constitutes a heart of a bounded $t$-structure in $\mathscr{D}$.
\end{enumerate}
\end{propsub}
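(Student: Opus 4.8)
The plan is to treat this as the relative version of \cite[Proposition 5.3]{MR2373143}: run Bridgeland's slicing\,$\leftrightarrow$\,(heart, stability function with Harder--Narasimhan property) dictionary on the ambient category $\mathscr{D}$, and then carefully track compatibility with the semiorthogonal decomposition $\mathscr{D}=\langle\mathscr{D}_1,\mathscr{D}_2\rangle$ by means of the torsion pair underlying glued hearts (Lemma \ref{expressheart}) and the gluing theorem (Theorem \ref{gluestab}).

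First I would handle the passage from a relative stability condition to the data. Given $\sigma_{\r}=(Z,\mathscr{P}_1)\in\Stab(\mathscr{D},\mathscr{D}_1)$, fix a relative extension $\sigma=(Z,\mathscr{P})$; by Definition \ref{def-ref} together with the support property it is required to carry, $\sigma\in\Stab(\mathscr{D})$. Put $\mathscr{A}:=\mathscr{P}((0,1])$, $\mathscr{A}_1:=\mathscr{P}_1((0,1])$ and $\mathscr{A}_2:=\mathscr{P}_2((0,1])=\mathscr{A}\cap\mathscr{D}_2$. From the slicing axioms $\mathscr{A}_1$ is the heart of the bounded $t$-structure $\mathscr{P}_1(0,\infty)$ on $\mathscr{D}_1$, and $\mathscr{P}_1(\phi)\subset\mathscr{P}(\phi)$ gives $\mathscr{A}_1\subset\mathscr{A}$; applying \cite[Proposition 5.3]{MR2373143} to $\sigma$ shows that $Z\circ v$ is a stability function on $\mathscr{A}$ with the Harder--Narasimhan property (and support), which is (1)(a). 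For (1)(b): $\mathscr{A}_2$ is a heart on $\mathscr{D}_2$ because $(Z_2,\mathscr{P}_2)\in\Stab(\mathscr{D}_2)$, and any $Z|_{\mathscr{A}_2}$-semistable $E\in\mathscr{A}_2$ lies in some $\mathscr{P}_2(\phi)\subset\mathscr{P}(\phi)$, hence is $Z$-semistable. Finally, unwinding the definition of $\mathscr{A}_i(\epsilon,\delta)$ and using that $\mathscr{P}_i$ of an interval is the extension closure of the subcategories $\mathscr{P}_i(\phi)$ it contains, one identifies $\mathscr{A}_i(\epsilon,\delta)=\mathscr{P}_i((\delta-\epsilon,1+\delta+\epsilon))$, so Condition (2) of Definition \ref{def-ref} is literally (2).

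Next I would go back: given $(\mathscr{A}_1,Z)$ and $0<\epsilon<\delta<1$ as in (1)--(2), reconstruct $\sigma_{\r}$. Applying \cite[Proposition 5.3]{MR2373143} to $\mathscr{A}$ and $Z\circ v$ produces $\sigma=(Z,\mathscr{P})\in\Stab(\mathscr{D})$ with $\mathscr{P}((0,1])=\mathscr{A}$, and (1)(b) furnishes a stability condition $\sigma_2$ on $\mathscr{D}_2$ with heart $\mathscr{A}_2=\mathscr{A}\cap\mathscr{D}_2$. It then remains to produce the relative slicing $\mathscr{P}_1$ on $\mathscr{D}_1$ with $\mathscr{P}_1(\phi)\subset\mathscr{P}(\phi)$. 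Here I would argue that $\Hom_{\mathscr{D}}(\mathscr{D}_2,\mathscr{D}_1)=0$ together with (1)(b) forces $(\mathscr{A}_2,\mathscr{A}_1)$ to be a torsion pair in $\mathscr{A}$, so that $\mathscr{A}_1$ is the torsion-free class and is closed under subobjects in $\mathscr{A}$; consequently $Z_1:=Z|_{\Lambda_1}$ is a stability function on $\mathscr{A}_1$ inheriting the support property and the Harder--Narasimhan property from $\mathscr{A}$ (subobjects in $\mathscr{A}_1$ agree with subobjects in $\mathscr{A}$, and the largest Harder--Narasimhan phase does not increase under quotients, so a $\sigma$-Harder--Narasimhan filtration of an object of $\mathscr{A}_1$ can be corrected to one lying in $\mathscr{A}_1$). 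A second application of \cite[Proposition 5.3]{MR2373143}, now to $\mathscr{D}_1$, yields the slicing $\mathscr{P}_1$ with $\mathscr{P}_1((0,1])=\mathscr{A}_1$, and $\mathscr{A}_1\subset\mathscr{A}$ being closed under subobjects gives $\mathscr{P}_1(\phi)\subset\mathscr{P}(\phi)$ for all $\phi$. Thus $\sigma$ relatively extends $(Z,\mathscr{P}_1)$ and $(Z_2,\mathscr{P}\cap\mathscr{D}_2)\in\Stab(\mathscr{D}_2)$; Condition (2) of Definition \ref{def-ref} is (2) via the same dictionary; and the support property of $\sigma$ places $(Z,\mathscr{P}_1)$ in $\Stab(\mathscr{D},\mathscr{D}_1)$. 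The two constructions are manifestly mutually inverse.

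I expect the main obstacle to be the claim in the reverse direction that $\mathscr{A}=\langle\mathscr{A}_2,\mathscr{A}_1\rangle$ is a torsion pair --- equivalently, that for every $X\in\mathscr{A}$ the semiorthogonal truncations $i_1^{\ast}(X)$ and $i_2^{!}(X)$ again lie in $\mathscr{A}_1$ and $\mathscr{A}_2$ respectively. Once this is available the remaining steps are the routine bookkeeping of \cite[Proposition 5.3]{MR2373143}, but establishing it is precisely where Condition (1)(b), Condition (2) (the compatibility under gluing of the hearts $\mathscr{A}_i$ sitting inside $\mathscr{A}_i(\epsilon,\delta)$, via Theorem \ref{gluestab} and Lemma \ref{expressheart}), and the $\Hom$-vanishing of the semiorthogonal decomposition all have to be combined.
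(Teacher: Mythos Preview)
Your proposal is correct and follows essentially the same route as the paper: both directions are handled by invoking \cite[Proposition 5.3]{MR2373143} on $\mathscr{D}$ and on $\mathscr{D}_1$, with the forward direction reading off $\mathscr{A}_1=\mathscr{P}_1((0,1])\subset\mathscr{A}=\mathscr{P}((0,1])$ and the identification $\mathscr{A}_i(\epsilon,\delta)=\mathscr{P}_i((\delta-\epsilon,1+\delta+\epsilon))$, and the converse reconstructing $\mathscr{P}$ and $\mathscr{P}_1$ from $(Z,\mathscr{A})$ and $(Z|_{\Lambda_1},\mathscr{A}_1)$. The paper's converse is in fact terser than yours---it simply asserts $\mathscr{P}_1(\phi)\subset\mathscr{P}(\phi)$ without isolating the torsion-pair step---and your anticipated obstacle is milder than you suggest: since $\mathscr{A}_1,\mathscr{A}_2\subset\mathscr{A}$ forces $\mathscr{D}_i^{\le 0}\subset\mathscr{D}^{\le 0}$ and $\mathscr{D}_i^{\ge 0}\subset\mathscr{D}^{\ge 0}$, the BBD-glued heart $\langle\mathscr{A}_2,\mathscr{A}_1\rangle$ is a heart contained in the heart $\mathscr{A}$, hence equal to it, without any appeal to Condition~(2).
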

\begin{proof}
Given a relative stability condition $\sigma_{\r}=(Z,\mathscr{P}_1)$  in $\Stab(\mathscr{D},\mathscr{D}_1)$, we can derive a bounded $t$-structure $\mathscr{P}_1(>0)$ of $\mathscr{D}_1$ with the heart $\mathscr{A}_1=\mathscr{P}_1((0,1])\subset\mathscr{D}_1$. By definition, $\sigma_{\r}$ can be relatively extended to a stability condition $\sigma=(Z,\mathscr{P})\in\Stab(\mathscr{D})$. According to \cite[Proposition 5.3]{MR2373143}, $Z\circ v$ is a stability function on $\mathscr{A}:=\mathscr{P}((0,1])$ that possesses the Harder-Narasimhan property. Condition (b) holds because $(Z_2,\mathscr{P}_2)\in\Stab(\mathscr{D}_2)$. Condition (3) is naturally satisfied by taking $\mathscr{A}_i(\epsilon,\delta):=\mathscr{P}_i((\delta-\epsilon,1+\delta+\epsilon))$.

For the converse, let $\mathscr{A}_1$ be the heart of a bounded $t$-structure on $\mathscr{D}_1$, and assume that $Z\circ v$ is a stability function on $\mathscr{A}$ with the Harder-Narasimhan property. For any $\phi\in(0,1]$, denote $\mathscr{P}(\phi)$ (resp. $\mathscr{P}_1(\phi)$) as the full additive subcategory of $\mathscr{D}$ (resp. $\mathscr{D}_1$) consisting of semistable objects of $\mathscr{A}$ (resp. $\mathscr{A}_1$) with phase $\phi$, including the zero object. Then, by \cite[Proposition 5.3]{MR2373143} again, we conclude that $(Z,\mathscr{P})$ is a stability condition on $\mathscr{D}$. Note that $\mathscr{P}_1$ forms a slicing of $\mathscr{D}$ and $\mathscr{P}_1(\phi)\subset\mathscr{P}(\phi)$ for all $\phi\in\mathbb R$. 
Therefore, we can assert that $\sigma_{\r}$ is a relative stability condition in $\Stab(\mathscr{D},\mathscr{D}_1)$. 
\end{proof}

\begin{propsub}
\label{redundant}
Suppose $\mathscr{D}$ is a smooth, proper, and idempotent complete triangulated category with a left admissible subcategory $\mathscr{D}_1$. Then the Condition (2) in \Cref{def-ref} can be omitted, i.e.
\[
\Stab(\mathscr{D},\mathscr{D}_1)\cong\left\{(Z,\mathscr{P}_1)\in\Hom_{\mathbb Z}(\Lambda,\mathbb C)\times\mathrm{Slice}(\mathscr{D}_1)\colon
\begin{aligned}
&\exists\,\text{ relatively extended } (Z,\mathscr{P})\\
&\text{in }\Stab(\mathscr{D})\\
\end{aligned}
\right\}.
\]
\end{propsub}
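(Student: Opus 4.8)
The plan is to show that under the hypotheses (smooth, proper, idempotent complete, hence $K(\mathscr{D}_1)$, $K(\mathscr{D}_2)$, $K(\mathscr{D})$ all of finite rank and $\Stab(\mathscr{D}_i)$ non-empty with every stability condition satisfying the support property), Condition (2) of \Cref{def-ref} is automatic once Condition (1) holds. Concretely, given a pair $(Z,\mathscr{P}_1)$ that admits a relatively extended stability condition $(Z,\mathscr{P})\in\Stab(\mathscr{D})$, I must produce $0<\epsilon<\delta<1$ so that for every heart $\mathscr{A}_i\in\mathscr{P}_i((\delta-\epsilon,1+\delta+\epsilon))$, the glued subcategory $\langle\mathscr{A}_2,\mathscr{A}_1\rangle$ is a heart of a bounded $t$-structure in $\mathscr{D}$. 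The reduction key is the remark already recorded after \Cref{def-rel-1}: Condition (2) holds automatically whenever $\mathscr{P}_2$ is of finite length with finitely many simple objects. So the heart of the matter is to replace the given extension $(Z,\mathscr{P})$ by one whose restriction $\mathscr{P}_2$ to $\mathscr{D}_2$ is of finite length with finitely many simples, without changing $Z$ and without shrinking $\mathscr{P}_1$.

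First I would invoke \Cref{equi-support}: since $(Z,\mathscr{P})\in\Stab(\mathscr{D})$ and $\Lambda$ has finite rank, $\sigma=(Z,\mathscr{P})$ is full, hence locally finite and reasonable; restricting to $\mathscr{D}_2$, the induced $\sigma_2=(Z_2,\mathscr{P}_2)$ lies in $\Stab(\mathscr{D}_2)$ and is again reasonable and locally finite. Next I would use the $\widetilde{\GL}^+_2(\mathbb R)$-action and the density of finite-length (algebraic) stability conditions: on a smooth proper idempotent complete category the heart $\mathscr{P}_2((0,1])$ of a locally finite stability condition, after a small deformation of $Z_2$ inside $\Stab(\mathscr{D}_2)$ — or after rotating by a small element of $\widetilde{\GL}^+_2(\mathbb R)$ — can be taken to be a finite-length abelian category with finitely many simple objects (this is where smoothness/properness/idempotent completeness enters, guaranteeing finite rank of $K_{\num}$ and existence of such algebraic hearts in the component). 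The subtlety is that I am not allowed to move $Z$, only $\mathscr{P}$; so instead I would argue directly that for the \emph{given} $Z$, among all relative extensions of $(Z,\mathscr{P}_1)$ there is one with $\mathscr{P}_2$ algebraic — using \Cref{gluestab}, one can re-glue $\mathscr{P}_1$ with any suitable reasonable $\sigma_2'=(Z_2,\mathscr{P}_2')\in\Stab(\mathscr{D}_2)$ with the same central charge $Z_2$, provided the gluing-compatibility Conditions (1)–(2) of \Cref{gluestab} hold, and the set of such $\sigma_2'$ with the same central charge but finite-length heart is non-empty by local finiteness and the structure of $\Stab(\mathscr{D}_2)$.

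Then, with a relative extension $(Z,\mathscr{P})$ chosen so that $\mathscr{P}_2$ is finite-length with finitely many simples, the remark after \Cref{def-rel-1} gives Condition (2) directly: one takes $\delta$ so that $1$ is not a phase of any simple of $\mathscr{P}_2$ near the boundary and $\epsilon$ small enough that the finitely many semistable phases in $(\delta-\epsilon,1+\delta+\epsilon)$ are controlled, and then $\langle\mathscr{A}_2,\mathscr{A}_1\rangle$ is obtained by finitely many tilts from $\mathscr{P}((0,1])$, hence is a bounded heart. Since $Z$ and $\mathscr{P}_1$ are unchanged, $(Z,\mathscr{P}_1)$ satisfies all of \Cref{def-ref}, so it lies in $\Stab(\mathscr{D},\mathscr{D}_1)$; conversely any element of $\Stab(\mathscr{D},\mathscr{D}_1)$ trivially lies in the right-hand set. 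This gives the claimed bijection. The main obstacle I anticipate is the middle step: producing a relative extension of $(Z,\mathscr{P}_1)$ with algebraic $\mathscr{P}_2$ \emph{for the fixed} $Z$ — one must check that the gluing-compatibility conditions of \Cref{gluestab} are preserved when $\mathscr{P}_2$ is replaced by a nearby finite-length slicing with the same central charge, which requires a short argument that the Hom-vanishing conditions (1)–(2) there are open in $\mathscr{P}_2$ and that finite-length hearts with prescribed central charge are available in the relevant region of $\Stab(\mathscr{D}_2)$.
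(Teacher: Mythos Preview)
Your proposal has a genuine gap, and it is precisely the one you flag yourself as the ``main obstacle'': there is no general mechanism, under the hypotheses smooth/proper/idempotent complete, that produces a stability condition on $\mathscr{D}_2$ with \emph{finite-length heart and finitely many simples} and with the \emph{prescribed} central charge $Z_2$. Smoothness and properness give finiteness of $K_{\mathrm{num}}$ and boundedness of Ext-amplitude, but they do not imply that algebraic stability conditions are dense in $\Stab(\mathscr{D}_2)$, nor that any exist with a given central charge. So the reduction ``replace $\mathscr{P}_2$ by an algebraic slicing with the same $Z_2$'' cannot be carried out in general, and the argument stalls.

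The paper's proof uses the smooth/proper hypothesis in a completely different way. By \cite[Proposition~3.2]{HalpernLeistner2023StabilityCA} there is an integer $n$, depending only on the semiorthogonal components, with $\Hom^{\le n}(\mathscr{P}_1((0,1]),\mathscr{P}_2((0,1]))=0$. One then replaces $\sigma_2$ by its even shift $\sigma_2[2m]$ for $2m\ge n+1$: this keeps the central charge $Z_2$ unchanged (since $e^{-2m\pi i}=1$) while pushing the slicing far enough that the gluing hypotheses of \Cref{gluestab} hold for $(\sigma_1,\sigma_2[2m])$ \emph{and} for all nearby pairs $(\mathscr{P}_1((\epsilon,1+\epsilon]),\mathscr{P}_2'((\epsilon,1+\epsilon]))$. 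This uniform Hom-vanishing is exactly what yields the family of glued hearts required by Condition~(2), with no finiteness assumption on $\mathscr{P}_2$ at all. The point is that smooth/proper is used to control Ext-amplitude between the components, not to manufacture algebraic hearts.
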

\begin{proof}
Let $(Z,\mathscr{P}_1)\in\Hom_{\mathbb Z}(\Lambda,\mathbb C)\times\mathrm{Slice}(\mathscr{D}_1)$ with a relatively extended stability condition $\sigma=(Z,\mathscr{P})\in\Stab(\mathscr{D})$. According to \cite[Proposition 3.2]{HalpernLeistner2023StabilityCA}, there exists an integer $n$, dependent only on $\cD_i$, such that 
\[
\Hom^{\le n}(\mathscr{P}_1((0,1]),\mathscr{P}_2((0,1]))=0,
\]
which is equivalent to 
\[
\Hom_{\mathscr{D}}(\mathscr{P}_1((-\infty,1+n],\mathscr{P}_2(n,+\infty))=0.
\]
Consider $\sigma'_2=(Z_2',\mathscr{P}_2'):=\sigma_2[2m]$ with $2m\ge n+1$. Due to \Cref{gluestab}, $(Z_1,\mathscr{P}_1)$ and $\sigma_2[2m]$ can be glued into a stability condition $\sigma'=(Z,\mathscr{P}')$. Furthermore, we can obtain a family of hearts $\{\mathscr{A}'_{\epsilon}\colon\epsilon>0\}$ glued from $\mathscr{P}_1((\epsilon,1+\epsilon])$ and $\mathscr{P}'_2((\epsilon,1+\epsilon])$ by Theorem \ref{gluestab}, which implies that $(Z,\mathscr{P}_1)\in\Stab(\mathscr{D},\mathscr{D}_1)$.
\end{proof}

\begin{remksub}
\label{eredundant}
By \cite{ORLOV201659,1942_7648,kontsevich2008stabilitystructuresmotivicdonaldsonthomas,
Bkstedt1993HomotopyLI}, the result of Proposition \ref{redundant} is applicable to the bounded derived categories of coherent sheaves on any smooth projective complex variety, as well as to the bounded derived category of modules over a finite-dimensional algebra with finite global dimension.
\end{remksub}

\subsection{Deformations of relative stability conditions}
\label{sec: deform}
In this section, we demonstrate the deformation property of spaces of relative stability conditions via the deformation of gluing stability conditions, which extends the result in Theorem \ref{deformation}. The proof relies on the insights from \cite{MR2373143, Bayer2007ATT, Collins2009GluingSC,1cd9cb3351f44a768b6be019582fb4e3}. The main result is stated as follows:
\begin{thmsub}
\label{rdeformation}
Let $\mathscr{D}$ be a triangulated category with a left admissible subcategory $\mathscr{D}_1\subset\mathscr{D}$. The space of relative stability conditions $\Stab(\mathscr{D},\mathscr{D}_1)$ has the structure of a complex manifold, and the map 
\[
\cZ_1\colon\Stab(\mathscr{D},\mathscr{D}_1)\to\Hom_{\mathbb Z}(\Lambda,\mathbb C)
\]
that sends a relative stability condition to its relative central charge is a local isomorphism.
\end{thmsub}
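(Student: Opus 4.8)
The plan is to imitate Bridgeland's strategy for \Cref{deformation} but carried out on the "relative" locus, replacing the deformation of a slicing by the deformation of a \emph{glued} slicing as in \cite{Collins2009GluingSC}. Fix $\sigma_{\r}=(Z,\mathscr{P}_1)\in\Stab(\mathscr{D},\mathscr{D}_1)$ with a chosen relatively extended $\sigma=(Z,\mathscr{P})\in\Stab(\mathscr{D})$, and set $\sigma_2=(Z_2,\mathscr{P}_2)=(Z|_{\Lambda_2},\mathscr{P}\cap\mathscr{D}_2)\in\Stab(\mathscr{D}_2)$. First I would define a candidate chart around $\sigma_{\r}$: by \Cref{deformation} applied to $\Stab(\mathscr{D}_1)$, the central charge map $\cZ_1^{\mathrm{old}}\colon\Stab(\mathscr{D}_1)\to\Hom_{\mathbb Z}(\Lambda_1,\mathbb C)$ is a local isomorphism, so there is a neighbourhood $U_1$ of $Z_1:=Z|_{\Lambda_1}$ over which $Z_1'\mapsto\sigma_1'=(Z_1',\mathscr{P}_1')$ is a continuous section; likewise a neighbourhood $U_2$ of $Z_2$ giving $Z_2'\mapsto\sigma_2'$. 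Given $W\in\Hom_{\mathbb Z}(\Lambda,\mathbb C)$ close to $Z$, restrict to $W_i:=W|_{\Lambda_i}$, obtain $\sigma_i'$ for $i=1,2$, and attempt to glue them via \Cref{gluestab} to produce $\sigma_W=(W,\mathscr{P}_W)\in\Stab(\mathscr{D})$; then set $\cZ_1^{-1}(W):=(W,\mathscr{P}_W\cap\mathscr{D}_1)=(W,\mathscr{P}_1')$. The content is to check that the gluing hypotheses (1) and (2) of \Cref{gluestab} persist under small deformation, that the result is again a \emph{relative} stability condition (i.e.\ Condition (2) of \Cref{def-ref} survives), and that this assignment is inverse to $\cZ_1$ and a homeomorphism, whence the complex structure is transported from $\Hom_{\mathbb Z}(\Lambda,\mathbb C)$.

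The steps, in order. \textbf{Step 1 (openness of the gluing hypotheses).} Hypothesis (1) of \Cref{gluestab}, $\Hom^{\le 0}_{\mathscr{D}}(\mathscr{P}_1((0,1]),\mathscr{P}_2((0,1]))=0$, is implied by the stronger vanishing $\Hom^{\le n}_{\mathscr{D}}(\mathscr{P}_1((0,1]),\mathscr{P}_2((0,1]))=0$ for a large $n$ — indeed from \cite[Proposition 3.2]{HalpernLeistner2023StabilityCA} in the smooth proper idempotent-complete case (and more generally this is where the argument of \Cref{redundant} is reused) — and such a bound is stable under the small rotations/shears of slicings caused by moving $W$ a little, since $\mathscr{P}_i'((0,1])\subset\mathscr{P}_i((-\eta,1+\eta))$ for small $\eta$. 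Hypothesis (2), the existence of $a\in(0,1)$ with $\Hom^{\le 0}_{\mathscr{D}}(\mathscr{P}((a,a+1]),\mathscr{P}((a,a+1]))=0$, holds for $\sigma$ and, because the glued slicing depends continuously on $(\sigma_1',\sigma_2')$ in the Bridgeland metric (this is the deformation statement in \cite{Collins2009GluingSC}, see also \cite{1cd9cb3351f44a768b6be019582fb4e3}), persists for $a$ in a slightly shrunk interval after deformation. \textbf{Step 2 (the support property).} The glued $\sigma_W$ is reasonable by \Cref{gluestab}; combined with fullness — which is automatic once $\Lambda=\Lambda_1\oplus\Lambda_2$ and both pieces are full, since a splitting of $\Hom_{\mathbb Z}(\Lambda,\mathbb C)$ is realized — \Cref{equi-support} upgrades "full and reasonable" to the support property, so $\sigma_W\in\Stab(\mathscr{D})$. \textbf{Step 3 (Condition (2) of \Cref{def-ref}).} Using \Cref{gluestab} one produces, exactly as in the proof of \Cref{redundant}, a family of hearts glued from $\mathscr{P}_1'((\epsilon,1+\epsilon])$ and $\mathscr{P}_2'((\epsilon,1+\epsilon])$; equivalently, by \Cref{equi}(2) the constants $0<\epsilon<\delta<1$ for $\sigma_{\r}$ can be kept (after harmless shrinking) for the deformed data, so $(W,\mathscr{P}_1')\in\Stab(\mathscr{D},\mathscr{D}_1)$. \textbf{Step 4 (local bijectivity and topology).} The map $W\mapsto(W,\mathscr{P}_1')$ is, by construction, a right inverse to $\cZ_1$ on a neighbourhood of $Z$; for injectivity of $\cZ_1$ near $\sigma_{\r}$ one argues as in \cite{MR2373143}: a relative stability condition with central charge $W$ sufficiently close to $Z$ has its relative slicing $\mathscr{P}_1$ forced (the HN phases of objects in $\mathscr{D}_1$ vary continuously with $W$ and cannot jump by a full integer within a small ball), and likewise the extension $\mathscr{P}$ on $\mathscr{D}_2$ is pinned down by \Cref{deformation} for $\mathscr{D}_2$; hence $\cZ_1$ is a local homeomorphism onto an open subset of $\Hom_{\mathbb Z}(\Lambda,\mathbb C)$. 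Pulling back the standard complex structure along these charts, and checking the transition maps are the identity on $\Hom_{\mathbb Z}(\Lambda,\mathbb C)$ (so holomorphic), endows $\Stab(\mathscr{D},\mathscr{D}_1)$ with the structure of a complex manifold of dimension $\rk(\Lambda)$ for which $\cZ_1$ is a local isomorphism. Finally, the case $\mathscr{D}_1=\mathscr{D}$ recovers \Cref{deformation} verbatim.

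I expect the main obstacle to be Step 1 together with the continuity in Step 4: one must show that the Collins–Polishchuk gluing construction is not merely well-defined at the basepoint but varies continuously (indeed, defines a local section of $\cZ_1$) as $W$ moves, which requires that (a) the two numerical inequalities in \Cref{gluestab} are genuinely open conditions in the pair $(\sigma_1',\sigma_2')$, and (b) the glued slicing $\mathscr{P}_W$ depends continuously on $W$ in the generalized metric $d$. Point (a) is where the boundedness of $\Hom$-amplitude between the two semiorthogonal factors — i.e.\ the Ext-finiteness input used in \Cref{redundant} — does the real work, and I would isolate it as a lemma: if $\sigma_1,\sigma_2$ satisfy the hypotheses of \Cref{gluestab} with the strict inequality in (1) replaced by a vanishing up to degree $n\ge 1$, then all nearby $(\sigma_1',\sigma_2')$ in $\Stab(\mathscr{D}_1)\times\Stab(\mathscr{D}_2)$ do too. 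Everything else (the support property via \Cref{equi-support}, Condition (2) via the argument of \Cref{redundant}, transport of the complex structure) is then formal.
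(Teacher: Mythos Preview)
Your Step 1 (and hence Step 3) has a genuine gap in the stated generality. You want to glue the deformed pair $(\sigma_1',\sigma_2')$ via \Cref{gluestab}, and for this you need the Hom-vanishing hypotheses (1) and (2) of that theorem to hold and to be open. You obtain them from the bounded Hom-amplitude input of \cite[Proposition~3.2]{HalpernLeistner2023StabilityCA}, exactly as in \Cref{redundant}; but that input requires $\mathscr{D}$ to be smooth, proper and idempotent-complete, which \Cref{rdeformation} does \emph{not} assume. In general nothing in \Cref{def-ref} says that the relatively extended $\sigma$ satisfies the Collins--Polishchuk vanishings, let alone that they persist under deformation. Your parenthetical ``and more generally this is where the argument of \Cref{redundant} is reused'' is precisely the point that fails: \Cref{redundant} is not available here.

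The paper's proof avoids \Cref{gluestab} entirely and uses Condition~(2) of \Cref{def-ref} as the deformation mechanism itself. Deforming $\sigma_1$ to $\tau_1=(W_1,\mathscr{Q}_1)$ by \Cref{deformation} with $d(\sigma_1,\tau_1)<\epsilon_1<\epsilon$ gives $\mathscr{Q}_1((\delta,1+\delta])\subset\mathscr{P}_1((\delta-\epsilon_1,1+\delta+\epsilon_1))$, so Condition~(2) \emph{directly} asserts that $\langle\mathscr{P}_2((\delta,1+\delta]),\mathscr{Q}_1((\delta,1+\delta])\rangle$ is a heart of a bounded $t$-structure on $\mathscr{D}$. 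From this heart (and the analogous one at phase $0$) the paper builds the slicing $\mathscr{P}'$ by an explicit torsion-pair argument, and checks the HN property using that $\mathscr{Q}_1((0,\delta])$, $\mathscr{Q}_1((\delta,1])$, $\mathscr{P}_2((0,\delta])$, $\mathscr{P}_2((\delta,1])$ are all of finite length (via \Cref{equi-support}). Condition~(2) for the deformed relative stability condition is then immediate by shrinking $\epsilon$ to any $\epsilon'\in(0,\epsilon-\epsilon_1)$ with the same $\delta$. In short: Condition~(2) is in the definition precisely so that one does not need \Cref{gluestab} to deform; you treated it as a condition to be \emph{verified a posteriori}, whereas the paper treats it as the \emph{tool} that produces the deformed extension. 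Your outline is essentially the proof of \Cref{redundant} grafted onto the deformation argument, and is valid only under the extra hypotheses of that proposition.
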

In other words, Theorem \ref{rdeformation} allows us to deform a relative stability condition $(Z,\mathscr{P}_1)$ (uniquely) by deforming $Z$.  

We now define the function
\[
\begin{aligned}
d_{\r}\colon\Stab(\mathscr{D},\mathscr{D}_1)\times\Stab(\mathscr{D},\mathscr{D}_1)&\to\mathbb R\\
(\sigma_1,\sigma_2)&\mapsto d_1(\sigma_{1},\sigma_{2})+d_2(\sigma_{1},\sigma_{2}),
\end{aligned}
\]
where $\sigma_{i}=(Z_{i},\mathscr{P}_i)$, for $i=1,2$,
\[
d_1(\sigma_1,\sigma_2):=
\sup\limits_{0\ne E\in\mathscr{D}_1}\left\{|\phi_{\sigma_2}^-(E)-\phi_{\sigma_1}^-|, |\phi^+_{\sigma_2}(E)-\phi^+_{\sigma_1}(E)|, \left|\mathrm{log}\frac{m_{\sigma_2}(E)}{m_{\sigma_1}(E)} \right| \right\},
\]
and
\[
d_2(\sigma_1,\sigma_2):=\sup\limits_{0\ne E\in\mathscr{D}_2}\left\{\frac{\vert Z_{1}(E)-Z_{2}(E)\vert}{\Vert v(E) \Vert}\right\}.
\]
\begin{lemmasub}
\label{local-inj}
The function $d_{\r}$ induces a generalised metric on $\Stab(\mathscr{D},\mathscr{D}_1)$ with the property that if $d_{\r}(\sigma_1,\sigma_2)<1$ and $Z_1=Z_2$, then $\sigma_1=\sigma_2$.
\end{lemmasub}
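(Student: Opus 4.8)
The plan is to split $d_{\r}=d_1+d_2$ and treat the two summands separately: I would reduce $d_1$ to an instance of Bridgeland's generalised metric on $\Stab(\mathscr{D}_1)$, recognise $d_2$ as (essentially) the restriction of a norm on $\Hom_{\mathbb Z}(\Lambda_2,\mathbb C)$, deduce that the sum is a generalised pseudometric, and then obtain separation of points from the splitting $\Lambda=\Lambda_1\oplus\Lambda_2$. The rigidity at distance $<1$ will then follow from Bridgeland's corresponding rigidity on $\mathscr{D}_1$.

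First I would record the structural fact making $d_1$ tractable: for any $\sigma_{\r}=(Z,\mathscr{P}_1)\in\Stab(\mathscr{D},\mathscr{D}_1)$ the restricted datum $\widehat{\sigma}:=(Z|_{\Lambda_1},\mathscr{P}_1)$ is a genuine stability condition on $\mathscr{D}_1$ with respect to $(\Lambda_1,v_1)$ and the restricted norm. Indeed, the slicing axioms for $\mathscr{P}_1$ are part of \Cref{def-ref}; the phase relation $Z(v(E))=m(E)\mathrm{exp}(\sqrt{-1}\pi\phi)$ for $0\ne E\in\mathscr{P}_1(\phi)$ holds because it holds in any relative extension $(Z,\mathscr{P})\in\Stab(\mathscr{D})$; and the support inequality for $\mathscr{P}_1$-semistable objects is inherited from $(Z,\mathscr{P})$ since $\mathscr{P}_1(\phi)\subset\mathscr{P}(\phi)$. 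Since, for $E\in\mathscr{D}_1$, the quantities $\phi^{\pm}_{\sigma}(E)$ and $m_{\sigma}(E)$ are computed from $\mathscr{P}_1$ and $Z|_{\Lambda_1}$ alone, a direct comparison of definitions shows that $d_1(\sigma_1,\sigma_2)$ equals Bridgeland's generalised metric on $\Stab(\mathscr{D}_1)$ evaluated at $(\widehat{\sigma}_1,\widehat{\sigma}_2)$. Hence $d_1$ is symmetric, satisfies the triangle inequality, vanishes on the diagonal, and $d_1(\sigma_1,\sigma_2)=0$ forces $\widehat{\sigma}_1=\widehat{\sigma}_2$, i.e.\ $\mathscr{P}_1=\mathscr{P}_2$ and $Z_1|_{\Lambda_1}=Z_2|_{\Lambda_1}$ (the latter because $m_{\sigma_i}(E)=|Z_i(v(E))|$ on semistable $E$, whose classes generate $\Lambda_1$).

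Next I would handle $d_2$. For $E\in\mathscr{D}_2$ one has $v(E)\in\Lambda_2$, so $d_2(\sigma_1,\sigma_2)=\sup\{\,|(Z_1-Z_2)(\lambda)|/\Vert\lambda\Vert:0\ne\lambda\in\Lambda_2\,\}$ using surjectivity of $v_2$; this is bounded above by the operator norm of the real-linear functional $(Z_1-Z_2)|_{\Lambda_{2,\mathbb R}}$, hence is finite, symmetric, satisfies the triangle inequality, and vanishes iff $Z_1|_{\Lambda_2}=Z_2|_{\Lambda_2}$. A sum of generalised pseudometrics being a generalised pseudometric, $d_{\r}$ is symmetric, satisfies the triangle inequality, and is zero on the diagonal; and $d_{\r}(\sigma_1,\sigma_2)=0$ iff $d_1=d_2=0$ iff $\mathscr{P}_1=\mathscr{P}_2$, $Z_1|_{\Lambda_1}=Z_2|_{\Lambda_1}$ and $Z_1|_{\Lambda_2}=Z_2|_{\Lambda_2}$, which via $\Lambda=\Lambda_1\oplus\Lambda_2$ gives $Z_1=Z_2$ and hence $\sigma_1=\sigma_2$; so $d_{\r}$ is a generalised metric. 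For the last claim, if $Z_1=Z_2$ then $d_2(\sigma_1,\sigma_2)=0$, so $d_1(\sigma_1,\sigma_2)=d_{\r}(\sigma_1,\sigma_2)<1$ while $\widehat{\sigma}_1$ and $\widehat{\sigma}_2$ share the central charge $Z_1|_{\Lambda_1}$; Bridgeland's rigidity at distance $<1$ on $\Stab(\mathscr{D}_1)$ (the lemma behind the proof of \Cref{deformation}, \cite[Lemma~6.4]{MR2373143}) then yields $\widehat{\sigma}_1=\widehat{\sigma}_2$, so $\mathscr{P}_1=\mathscr{P}_2$, and together with $Z_1=Z_2$ we conclude $\sigma_1=\sigma_2$.

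I expect the only genuinely non-formal points to be the verification that $(Z|_{\Lambda_1},\mathscr{P}_1)$ really lies in $\Stab(\mathscr{D}_1)$—so that Bridgeland's metric theory transfers to $d_1$ verbatim—and the appeal to the sharp constant $1$ in Bridgeland's rigidity lemma; everything else is bookkeeping about sums of pseudometrics and the splitting of $\Lambda$.
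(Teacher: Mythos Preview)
Your proposal is correct and follows essentially the same approach as the paper: both arguments split $d_{\r}=d_1+d_2$, identify $d_1$ with Bridgeland's generalised metric on $\Stab(\mathscr{D}_1)$ via the restriction $(Z,\mathscr{P}_1)\mapsto(Z|_{\Lambda_1},\mathscr{P}_1)$, treat $d_2$ as detecting $Z|_{\Lambda_2}$, and then invoke \cite[Lemma~6.4]{MR2373143} for the rigidity at distance $<1$. Your write-up is in fact more explicit than the paper's in verifying that $(Z|_{\Lambda_1},\mathscr{P}_1)\in\Stab(\mathscr{D}_1)$ and in isolating the hypothesis $Z_1=Z_2$ before applying Bridgeland's lemma.
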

\begin{proof}
Suppose that $d_{\r}(\sigma_1,\sigma_2)=0$. Then, an object $E\in\mathscr{D}$ is relatively $\sigma_1$-semistable if and only if $E$ is relatively $\sigma_2$-semistable, and for any nonzero $E\in\mathscr{D}_1$, we have $m_{\sigma_1}(E)=m_{\sigma_2}(E)$. It follows that $Z_1\vert_{\Lambda_1}=Z_2\vert_{\Lambda_1}$ since $(Z_1\vert_{\Lambda_1},\mathscr{P}_1)$ and $(Z_2\vert_{\Lambda_1},\mathscr{P}_2)$ are stability conditions on $\mathscr{D}_1$ and $Z_1\vert_{\Lambda_1},Z_2\vert_{\Lambda_1}$ agree on semistable objects which span $K(\mathscr{D}_0)$. In addition, $Z_1(E)=Z_2(E)$ for any nonzero $E\in\mathscr{D}_2$. Hence, we conclude that $\sigma_1=\sigma_2$ and it follows that $d_{\r}$ is a metric on $\Stab(\mathscr{D},\mathscr{D}_1)$. By \cite[Lemma 6.4]{MR2373143}, if $d_{\r}(\sigma_1,\sigma_2)<1$, we have $(Z_1\vert_{\Lambda_1},\mathscr{P}_1)=(Z_2\vert_{\Lambda_1},\mathscr{P}_2)$, which implies $\sigma_1=\sigma_2$. 
\end{proof}

Note that the term generalised metric means a distance function $d\colon X\times X\to[0,\infty]$ on a set $X$ satisfying all the usual metric space axioms except that it need not be finite. Based on Lemma \ref{local-inj}, $d_{\r}$ defines a topology on $\Stab(\mathscr{D},\mathscr{D}_1)$ in the usual way and induces a metric space structure on each connected component of $\Stab(\mathscr{D},\mathscr{D}_1)$.

\begin{proof}[Proof of Theorem \ref{rdeformation}]
The local injectivity follows from Lemma \ref{local-inj}. We now proceed to demonstrate the local surjectivity. Consider $\sigma_{\r}=(Z,\mathscr{P}_1)\in\Stab(\mathscr{D},\mathscr{D}_1)$ with its relatively extended stability condition $\sigma=(Z,\mathscr{P})\in\Stab(\mathscr{D})$. Let $\sigma_1=(Z_1,\mathscr{P}_1):=(Z\vert_{\Lambda_1},\mathscr{P}_1)\in\Stab(\mathscr{D}_1)$. According to Theorem \ref{deformation}, there exists an $\epsilon_1\in(0,\epsilon)$, where $\epsilon$ is as in Definition \ref{def-ref} for $\sigma_{\r}$, such that for any $W_1\in\Hom_{\mathbb Z}(\Lambda_1,\mathbb C)$ with $\Vert W_1-Z_1\Vert<\epsilon_1$, there is a unique stability condition $\tau_1=(W_1,\mathscr{Q}_1)\in\Stab(\mathscr{D}_1)$ with $d(\sigma_1,\tau_1)<\epsilon_1$. Suppose $\tau_1=(W_1,\mathscr{Q}_1)$ is such a stability condition in $\Stab(\mathscr{D}_1)$ with $\Vert Z_1-W_1\Vert<\epsilon'$. We claim that there exists a stability condition $\tau_2=(W_2,\mathscr{Q}_2)\in\Stab(\mathscr{D}_2)$ that can be glued together with $\tau_1$. We can first reduce to the case where $\Im Z_1=\Im W_1$ or $\Re Z_1=\Re W_1$. Indeed, it suffices to consider the first case, as the second follows the same way by replacing $\mathscr{P}_1((0,1])$ with $\mathscr{P}_1((\frac{1}{2},\frac{3}{2}])$. In this scenario, we have $\mathscr{Q}_1((0,1])=\mathscr{P}_1((0,1])$. By \cite[Lemma 6.1]{MR2373143}, it follows that
\[
\mathscr{Q}_1(\phi)\subset\mathscr{P}_1([\phi-\epsilon_1,\phi+\epsilon_1]) \text{ for all } \phi\in\mathbb R.
\]
Thus, 
\[
\mathscr{Q}_1((\delta,1+\delta])\subset\mathscr{P}_1((\delta-\epsilon_1,1+\delta+\epsilon_1]).
\]
Let us denote $\mathscr{A}_{\alpha}:=\langle\mathscr{P}_2(\alpha,1+\alpha), \mathscr{Q}_1(\alpha,1+\alpha)\rangle$ for any $\alpha\in\mathbb R$. By Definition \ref{def-ref}, we obtain a heart $\mathscr{A}_{\delta}$ of a bounded $t$-structure in $\mathscr{D}$. Define
\[
\mathscr{P}'((0,\delta]):=\left\{X\in \mathscr{D}\mid i^{\ast}_1(X)\in \mathscr{Q}_1((0,\delta]), i^{!}_2(X)\in\mathscr{P}_2((0,\delta])\right\},
\]
and
\[
\mathscr{P}'((\delta,1]):=\left\{X\in \mathscr{D}\mid i^{\ast}_1(X)\in \mathscr{Q}_1((\delta,1]), i^{!}_2(X)\in\mathscr{P}_2((\delta,1])\right\}.
\]
We claim that $\langle\mathscr{P}'((\delta,1]),\mathscr{P}'((0,\delta])\rangle$ forms a torsion pair in $\mathscr{A}_0=\mathscr{P}((0,1])$. For any $X\in\mathscr{A}_0$, we need to demonstrate the existence of a unique short exact sequence
\[
0\to X_2\to X\to X_1\to0
\]
with $X_2\in\mathscr{P}'((\delta,1])$ and $X_1\in\mathscr{P}'((0,\delta])$. Note that $\mathscr{A}_0\subset\langle\mathscr{A}_{\delta},\mathscr{A}_{\delta}[-1]\rangle$ and $\mathscr{A}_{\delta}\subset\langle\mathscr{A}_{0}[1],\mathscr{A}_{0}\rangle$. Hence, we have a unique triangle
\begin{equation}
\label{torsion-sequence}
X_2\to X\to X_1\to X_2[1].
\end{equation}
It is straightforward to verify that $X_1, X_2\in\mathscr{A}_0$, which implies Equation (\ref{torsion-sequence}) is the required sequence. We now define 
\[
Z':=W_1\circ i^{\ast}_1+Z_2\circ i^{!}_2\in\Hom_{\mathbb Z}(\Lambda,\mathbb C).
\]
Then $Z'$ is a stability function on $\mathscr{A}_0$. Observe that for $X\in\mathscr{P}'((0,\delta])$, we have $\phi(Z'(X))\le\delta$, and for $X\in\mathscr{P}'((a,1])$, $\phi(Z'(X))>\delta$. Let $0\ne E\in\mathscr{P}'((\delta,1])$. Assume $X\in\mathscr{A}_0$ destabilizes $E$, which means $X\subset E$ with $\phi(Z'(E))<\phi(Z'(X))$. Consider the exact sequence
\[
0\to X_2\to X\to X_1\to 0,
\]
where $X_2\in\mathscr{P}'((\delta,1])$ and $X_1\in\mathscr{P}'((0,\delta])$. We have that $\phi(Z'(X_2))\ge\phi(Z'(X))>\phi(Z'(E))$, which implies $X_2\in\mathscr{P}'((\delta,1])$ also destabilizes $E$. According to Lemma \ref{equi-support}, $\mathscr{Q}_1((0,\delta]), \mathscr{Q}_1((\delta,1]),\mathscr{P}_2((0,\delta]),\mathscr{P}((\delta,1])$ are all finite length. Therefore, the Harder-Narasimhan filtration of $X\in\mathscr{P}'((0,1])$ for $(Z',\mathscr{P}')$ can be induced from the Harder-Narasimhan filtrations of $X_2$ and $X_1$ from the canonical exact sequence of $X$. Thus, $(Z',\mathscr{P}')$ constitutes a stability condition on $\mathscr{D}$ which completes the proof of the claim by taking $\tau_2=\sigma_2$. Next, we demonstrate that $\sigma'_{\r}:=(Z',\mathscr{Q}_1)$ is a relative stability condition in $\Stab(\mathscr{D},\mathscr{D}_1)$. It suffices to verify condition (2) in Definition \ref{def-ref} which can be obtained by choosing $\epsilon'\in(0,\epsilon-\epsilon_1)$ and $\delta'=\delta$. We can then conclude that for any $\Vert W-Z\Vert<\epsilon_1$, there is a unique relative stability condition $\tau_{\r}=(W,\mathscr{Q}_1)\in\Stab(\mathscr{D},\mathscr{D}_1)$ such that $d_{\r}(\sigma_{\r},\sigma'_{\r})<\epsilon_1$.
\end{proof}

It is worth noting that there are also two types of group actions on the spaces of relative stability conditions. The action of $\gl2$ is defined in a manner analogous to that for stability conditions. One can verify that the spaces of relative stability conditions are closed under the $\gl2$-action, ensuring that this action is well-defined.  As for the action of the automorphism group, we need to confine it to its subgroup $\mathrm{Aut}(\mathscr{D},\mathscr{D}_1):=\{T\in\mathrm{Aut}(\mathscr{D})\colon T(\mathscr{D}_1)\subset\mathscr{D}_1\}$. 

\begin{remksub}
\label{right-convention}
In this paper, we investigate relative stability conditions on triangulated categories with respect to left admissible subcategories. By analogy, one can also define relative stability conditions with respect to right admissible subcategories and explore the relations between these relative stability conditions in certain cases.

According to \cite{MR2524593,qiu2023fusionstablestructurestriangulationcategories,dell2024fusionequivariantstabilityconditionsmorita}, one can also  consider the \emph{fusion-equivariant relative stability conditions}. Specifically, given a fusion category, one can examine the space $\Stab(\mathscr{D},\mathscr{D}_1)^{\cC}$ of $\cC$-equivariant relative stability conditions. It is then straightforward to extend \cite[Theorem A]{dell2024fusionequivariantstabilityconditionsmorita} to the relative setting, i.e. the space $\Stab(\mathscr{D},\mathscr{D}_1)^{\cC}$ is a closed submanifold of $\Stab(\mathscr{D},\mathscr{D}_1)$ with the forgetful map $Z$ defining the local homeomorphism 
\[
Z\colon\mathrm{Stab}(\mathscr{D},\mathscr{D}_1)^{\cC}\to\Hom_{K(\cC)}(K(\mathscr{D}),\mathbb C),\quad (Z,\mathscr{P})\mapsto Z.
\]
Suppose a finite group $G$ acts on $\mathscr{D}$ which preserves $\mathscr{D}_1$. Let $\mathscr{D}^G$ and $\mathscr{D}_1^G$ denote the $G$-equivariantisations of $\mathscr{D}$ and $\mathscr{D}_1$, respectively. Let $\mathrm{vec}(G)$ and $\mathrm{rep}(G)$ denote the fusion categories of finite dimensional $G$-graded vector spaces and representations of $G$, respectively. Under certain assumptions on $(\mathscr{D},\mathscr{D}_1)$, \cite[Theorem 4.8]{dell2024fusionequivariantstabilityconditionsmorita} can also be generalized to the relative context. Namely, the space of $\mathrm{rep}(G)$-equivariant relative stability conditions on $\mathscr{D}^G$ with respect to $\mathscr{D}_1^G$ is homeomorphic to the space of $\mathrm{vec}(G)$-equivariant relative stability conditions on $\mathscr{D}$ with repsect to $\mathscr{D}_1$. 
\end{remksub}

Let $X$ be a smooth projective complex variety and let $\Stab(X)$ denote the space of numerical stability conditions on the bounded derived category of coherent sheaves on $X$, denoted by $\mathscr{D}^b(X)$. Recall that a stability condition $\sigma\in\Stab(X)$ is referred to as \emph{geometric} if, for every point $x\in X$, the skyscraper sheaf $\cO_x$ is $\sigma$-stable. Consider $\mathscr{D}_1$ a left admissible subcategory of $\mathscr{D}^b(X)$, and let $\Stab(X,\mathscr{D}_1)$ represent the space of numerical relative stability conditions on $\mathscr{D}^b(X)$ with respect to $\mathscr{D}_1$. A relative stability condition $\sigma_{\r}\in\Stab(X,\mathscr{D}_1)$ is referred to as \emph{geometric} if there is stability condition $\sigma\in\Stab(X)$ that relatively extends $\sigma_{\r}$ and is geometric. For surfaces, we have a further description for geometric relative stability conditions as follows:
\begin{lemmasub}
\label{rel-geo}
Let $X$ be a smooth projective complex surface and $\mathscr{D}_1$ a left admissible subcategory of $\mathscr{D}^b(X)$. Any geometric relative stability condition $\sigma_{\r}=(Z,\mathscr{P}_1)\in\Stab(X,\mathscr{D}_1)$ is uniquely determined by its relative central charge up to shifting the slicing by $[2n]$ for any $n\in\mathbb Z$. Furthermore, if $\sigma_{\r}$ is normalised by the action of $\gl2$ such that $Z(\cO_x)=-1$ and $\phi(\cO_x)=1$ for any $x\in X$, then:
\begin{enumerate}[$(1)$]
\item the relative central charge can be uniquely written in the following form:
\[
Z(E)=(\alpha-i\beta)H^2\mathrm{ch}_0(E)+(B+iH)\cdot\mathrm{ch}_1(E)-\mathrm{ch}_2(E),
\]
where $\alpha,\beta\in\mathbb R$, and $(H,B)\in\mathrm{Amp}_{\mathbb R}(X)\times\mathrm{NS}_{\mathbb R}(X)$.
\item $\mathscr{P}((0,1])$ is given by the tilt of $\mathrm{Coh}(X)$ at the torsion pair $(T,F)$, where
\[
T:=\left\{E\in\mathrm{Coh}(X)\colon
\begin{aligned}
&\text{ Any } H\text{-semistable Harder-Narasimhan factor } F \text{ of the }\\
&\text{ torsion free part of } E \text{ satisfies }\mathrm{Im} Z([F])>0.
\end{aligned}
\right\},
\]
\[
F:=\left\{E\in\mathrm{Coh}(X)\colon
\begin{aligned}
&E \text{ is torsion free, and any } H\text{-semistable Harder-Narasimhan}\\
&\text{ factor } F \text{ of } E \text{ satisfies }\mathrm{Im} Z([F])\le0.
\end{aligned}
\right\}.
\]
\item $\mathscr{P}_1((0,1])$ is given by $\mathscr{P}((0,1])\cap\mathscr{D}_1$.
\end{enumerate}
\end{lemmasub}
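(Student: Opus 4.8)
The plan is to reduce the statement to two ingredients: the gluing structure that every relatively extending stability condition automatically carries, which handles part (3) and reduces the problem to an absolute one on $\mathscr{D}^b(X)$; and the classification of geometric stability conditions on a smooth projective surface, which supplies parts (1) and (2) together with the uniqueness.

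Write $\mathscr{D}_2:={}^{\perp}\mathscr{D}_1$. Since $\sigma_{\r}$ is geometric, fix a geometric relatively extending stability condition $\sigma=(Z,\mathscr{P})\in\Stab(X)$. By definition $\sigma$ is glued from $\sigma_1:=(Z\vert_{\Lambda_1},\mathscr{P}_1)$ and $\sigma_2:=(Z\vert_{\Lambda_2},\mathscr{P}\cap\mathscr{D}_2)$, so \Cref{expressheart} applies: $\langle\mathscr{P}_2((0,1)),\mathscr{P}_1((0,1])\rangle$ is a torsion pair in $\mathscr{A}:=\mathscr{P}((0,1])$ with torsion-free part $\mathscr{P}_1((0,1])$. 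If $E\in\mathscr{A}\cap\mathscr{D}_1$, its torsion subobject lies in $\mathscr{P}_2((0,1))\subset\mathscr{D}_2$ and is therefore zero because $\Hom(\mathscr{D}_2,\mathscr{D}_1)=0$; hence $E\in\mathscr{P}_1((0,1])$, and since the reverse inclusion is clear we obtain $\mathscr{P}_1((0,1])=\mathscr{A}\cap\mathscr{D}_1$, which is (3). The same argument, together with uniqueness of Harder--Narasimhan filtrations, gives $\mathscr{P}_1(\phi)=\mathscr{P}(\phi)\cap\mathscr{D}_1$ for every $\phi$, so the relative slicing $\mathscr{P}_1$ is determined by any relatively extending $\mathscr{P}$; it thus remains to analyse $\sigma$ itself as an absolute geometric stability condition.

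Now I would normalise. The $\gl2$-action is compatible with relative extension, and all skyscrapers $\cO_x$, being $\sigma$-stable with equal classes in $\Lambda$, share a common phase and mass; so after acting by a suitable $g\in\gl2$ we may assume $Z(\cO_x)=-1$ and $\phi(\cO_x)=1$, the latter also fixing $\mathscr{P}$ among its $[2n]$-translates. I would then invoke the classification of geometric stability conditions on surfaces: using that the $\cO_x$ form a spanning class contained in the heart $\mathscr{A}$, one shows $\mathscr{A}$ is a tilt of $\mathrm{Coh}(X)$ at a torsion pair $(T,F)$, and positivity of the associated stability function ($\mathrm{Im}\,Z>0$ on $T$, $\mathrm{Im}\,Z\le 0$ on $F$) forces $F$ to consist of torsion-free sheaves and identifies $(T,F)$ with the slope-$H$-stability torsion pair in the statement. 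Writing $Z(E)=w_0\,\mathrm{ch}_0(E)+w_1\cdot\mathrm{ch}_1(E)+w_2\,\mathrm{ch}_2(E)$ (using that $Z$ factors through $K_{\mathrm{num}}(\mathscr{D}^b(X))$), evaluation of $Z$ on $\cO_x$, on $\cO_C$ for integral curves $C$, and on line bundles pins these coefficients down: $w_2=-1$; $w_1=B+iH$ with $B\in\mathrm{NS}_{\mathbb R}(X)$ and $H:=\mathrm{Im}\,w_1$ ample (nef-ness being immediate from $\mathrm{Im}\,Z\ge 0$ on torsion sheaves, ampleness requiring the standard finer argument); and $w_0$, an arbitrary complex number, may be written $(\alpha-i\beta)H^2$ with $\alpha,\beta\in\mathbb R$ since $H^2>0$. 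This is precisely (1), and the description of $(T,F)$ is (2); that $(\alpha,\beta)$ lies in the locus where this is genuinely a stability condition is automatic, as $\sigma_{\r}$ was assumed to be one.

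Finally, the uniqueness up to $[2n]$: the torsion pair $(T,F)$, hence $\mathscr{A}$, depends only on $Z$ (through $H$ and $\mathrm{Im}\,Z$), and a given central charge is realised by geometric stability conditions only along a single $[2n]$-orbit, since $\sigma[2n+1]$ has central charge $-Z$; so $\mathscr{P}$, and with it $\mathscr{P}_1=\mathscr{P}\cap\mathscr{D}_1$ from the second step, is determined by the relative central charge up to $[2n]$. \textbf{The main obstacle} is the surface classification invoked above --- in particular the assertion that the heart of a geometric stability condition on an arbitrary smooth projective surface is a tilt of $\mathrm{Coh}(X)$ and that $H$ is ample rather than merely nef; I would cite this and only indicate the positivity input. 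The relative parts --- establishing (3) and propagating the $[2n]$-ambiguity through the semiorthogonal decomposition --- are routine given \Cref{expressheart} and the vanishing $\Hom(\mathscr{D}_2,\mathscr{D}_1)=0$.
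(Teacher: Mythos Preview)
Your proposal is correct and follows essentially the same approach as the paper: both reduce to Bridgeland's classification of geometric stability conditions on surfaces (\cite[Proposition~10.3]{MR2376815}), which is exactly what the paper cites in its one-line proof. Your additional explanation of part~(3) via \Cref{expressheart} and the vanishing $\Hom(\mathscr{D}_2,\mathscr{D}_1)=0$ is a helpful unpacking of what the paper leaves implicit, but the underlying argument is the same.
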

\begin{proof}
The result follows directly from {\cite[Proposition 10.3]{MR2376815}}.
\end{proof}


\subsection{Initial examples}
\label{initial-example}
In this section, we present some initial examples of spaces of relative stability conditions on triangulated categories arising from Dynkin quivers and curves with positive genus. We start with some trivial examples:
\begin{enumerate}[$(1)$]
\item If $\mathscr{D}_1=0$, $\Stab(\mathscr{D},\mathscr{D}_1)$ consists of all the central charges in $\Stab(\mathscr{D})$;
\item If $\mathscr{D}_1=\mathscr{D}$, $\Stab(\mathscr{D},\mathscr{D}_1)=\Stab(\mathscr{D})$;
\item If $\mathscr{D}=\langle\mathscr{D}_1,\mathscr{D}_2\rangle$ forms an orthogonal decomposition, then
\[\Stab(\mathscr{D},\mathscr{D}_1)\cong\Stab(\mathscr{D}_1)\times\Stab(\mathscr{D}_2,0).\]
\end{enumerate}

\subsubsection{Categories from Dynkin quivers}
Let $Q$ be a Dynkin quiver and denote by $\mathscr{D}^b(Q)$ the bounded derived category of finitely generated right $\mathbb CQ$-modules. Consider a left admissible subcategory $\mathscr{D}_1\subset\mathscr{D}^b(Q)$. According to \cite{MR2575093} and \cite{MR2366948}, there exists a bijection between the following classes:
\begin{enumerate}[$(1)$]
\item The class of thick subcategories of $\mathscr{D}^b(Q)$;
\item The class of abelian subcategories of $\mathrm{mod}$-$\mathbb CQ$ which are closed under extensions;
\item Finitely generated torsion classes in $\mathrm{mod}$-$\mathbb CQ$.
\end{enumerate}
Consequently, $\mathscr{D}_1$ can also be characterized by the aforementioned classes. Let $\mathscr{A}$ be a heart in $\mathscr{D}^b(Q)$ and $S\in\mathscr{A}$ a simple object. We now review the concept of \emph{simple tilts} in \cite{MR3406522}. Define the full subcategories
\[
^{\perp}S:=\{E\in\mathscr{A}\mid\Hom_{\mathscr{A}}(E,S)=0\}, \quad S^{\perp}:=\{E\in\mathscr{A}\mid\Hom_{\mathscr{A}}(S,E)=0\}.
\]
The \emph{forward simple tilt} of $\mathscr{A}$ by $S$ is then given by $\mathscr{A}^{\#}_S:=\langle S[1],^{\perp}S\rangle$ and the \emph{backward simple tilt} of $\mathscr{A}$ by $S$ is given by $\mathscr{A}_S^{\flat}:=\langle S^{\perp},S[-1]\rangle$. It is known that the subcategories $\mathscr{A}^{\#}_S$ and $\mathscr{A}_S^{\flat}$ are hearts in $\mathscr{D}^b(Q)$.  The \emph{exchange graph} $\mathrm{EG}(\mathscr{D})$ \cite{MR3406522} of a triangulated category $\mathscr{D}$ is defined to be the oriented graph whose vertices are hearts in $\mathscr{D}$ and whose edges represent forward simple tilts between these hearts.  We can then describe $\Stab(\mathscr{D}^b(Q),\mathscr{D}_1)$ in terms of the set of vertices on $\mathrm{EG}(\mathscr{D})$, denoted by $\mathrm{EG}_0(\mathscr{D}_1)$, as follows:
\begin{propsub}
\label{ADE}
Suppose $\mathscr{D}^b(Q)$ denotes the bounded derived category of finitely generated right $\mathbb CQ$-modules for a Dynkin quiver $Q$, with $\mathscr{D}_1\ne0$ being a left admissible subcategory of $\mathscr{D}^b(Q)$. Then we have
\[
\Stab(\mathscr{D}^b(Q),\mathscr{D}_1)\cong\bigcup_{\mathscr{A}\in\mathrm{EG}_0(\mathscr{D}_1)}\mathbb H_+^{n},
\]
where $n=\mathrm{rk}(K(\mathscr{D}^b(Q)))$. 
\end{propsub}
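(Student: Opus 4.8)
\emph{Strategy.} The plan is to peel off the contribution of $\mathscr{D}_2:={}^{\perp}\mathscr{D}_1$ from a relative stability condition and then reduce each of the two resulting factors to the known cellular description of stability spaces of Dynkin-type categories. Since $\mathscr{D}^b(Q)$ is the bounded derived category of a finite-dimensional algebra of finite global dimension, it is smooth, proper and idempotent complete, so \Cref{redundant} and \Cref{eredundant} apply: a relative stability condition $(Z,\mathscr{P}_1)$ is exactly a group homomorphism $Z\colon\Lambda\to\mathbb C$ together with a slicing $\mathscr{P}_1$ of $\mathscr{D}_1$ admitting \emph{some} relatively extended $\sigma=(Z,\mathscr{P})\in\Stab(\mathscr{D}^b(Q))$. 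Moreover, by the classification of thick subcategories of $\mathscr{D}^b(Q)$ in \cite{MR2575093,MR2366948}, both $\mathscr{D}_1$ and $\mathscr{D}_2$ are admissible subcategories of the same Dynkin nature; in particular (being silting-discrete) every bounded $t$-structure on $\mathscr{D}_i$ has a length heart whose simple objects form a $\mathbb Z$-basis of $K(\mathscr{D}_i)$, with $\mathrm{rk}\,K(\mathscr{D}_1)+\mathrm{rk}\,K(\mathscr{D}_2)=n$, and $\Stab(\mathscr{D}_i)\neq\varnothing$ by polarizability.

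\emph{The product decomposition.} The key step is to show that
\[
\Phi\colon\Stab(\mathscr{D}^b(Q),\mathscr{D}_1)\longrightarrow\Stab(\mathscr{D}_1)\times\cZ_{\mathscr{D}_2}\!\big(\Stab(\mathscr{D}_2)\big),\qquad (Z,\mathscr{P}_1)\longmapsto\big((Z|_{\Lambda_1},\mathscr{P}_1),\,Z|_{\Lambda_2}\big)
\]
is an isomorphism, where $\cZ_{\mathscr{D}_2}\colon\Stab(\mathscr{D}_2)\to\Hom_{\mathbb Z}(\Lambda_2,\mathbb C)$ is the central charge map and its image, being that of a local isomorphism, is open in $\Hom_{\mathbb Z}(\Lambda_2,\mathbb C)$. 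Well-definedness and injectivity follow from $\Lambda=\Lambda_1\oplus\Lambda_2$ together with the observation that $(Z|_{\Lambda_1},\mathscr{P}_1)$ really lies in $\Stab(\mathscr{D}_1)$, since the inclusion $\mathscr{P}_1(\phi)\subset\mathscr{P}(\phi)$ transports the support property of $\sigma$ down to $\mathscr{D}_1$. For surjectivity I would run the proof of \Cref{redundant} in reverse: given $\sigma_1=(Z_1,\mathscr{P}_1)\in\Stab(\mathscr{D}_1)$ and $\zeta_2=\cZ_{\mathscr{D}_2}(\sigma_2)$ for some $\sigma_2\in\Stab(\mathscr{D}_2)$, the Hom-bound of \cite{HalpernLeistner2023StabilityCA} lets one choose $m\gg 0$ so that $\sigma_1$ and $\sigma_2[2m]$ satisfy the hypotheses of \Cref{gluestab}; the glued stability condition $\mathrm{gl}(\sigma_1,\sigma_2[2m])\in\Stab(\mathscr{D}^b(Q))$ then has central charge $Z_1\oplus\zeta_2$, since an even shift leaves the central charge unchanged, so it relatively extends $(Z_1\oplus\zeta_2,\mathscr{P}_1)$, and this resulting relative stability condition does not depend on the auxiliary choices of $\sigma_2$ and $m$. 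That $\Phi$ is a homeomorphism, indeed a biholomorphism for the complex structures of \Cref{rdeformation} and \Cref{deformation}, follows because $d_{\r}=d_1+d_2$ with $d_1$ the pullback along the first component of $\Phi$ of the Bridgeland metric on $\Stab(\mathscr{D}_1)$ and $d_2$ bi-Lipschitz equivalent to the pullback along the second component of the distance induced by $\|\cdot\|$, while all the central charge maps in play are local isomorphisms.

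\emph{Cellular structure and conclusion.} For the finite-type category $\mathscr{D}_i$, sending a stability condition to the heart $\mathscr{P}((0,1])$ partitions $\Stab(\mathscr{D}_i)$ into cells $U(\mathscr{A})$, and for a length heart $\mathscr{A}$ with simples $S_1,\dots,S_r$ the map $\sigma\mapsto(Z(S_1),\dots,Z(S_r))$ is a bijection $U(\mathscr{A})\xrightarrow{\sim}\mathbb H_+^{\,r}$: the $[S_j]$ form a $\mathbb Z$-basis of $K(\mathscr{D}_i)$, any tuple in $\mathbb H_+^{\,r}$ extends to a stability function on $\mathscr{A}$ (a nonzero non-negative combination of elements of $\mathbb H_+$ again lies in $\mathbb H_+$), and on such a heart the Harder--Narasimhan and support properties are automatic (cf.\ the remark after \Cref{def-rel-1}); the cells glue along forward simple tilts, recovering the exchange-graph description of \cite{MR3406522}. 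Hence $\Stab(\mathscr{D}_1)$ is covered by cells $\cong\mathbb H_+^{\,k}$, $k=\mathrm{rk}\,K(\mathscr{D}_1)$, indexed by $\mathrm{EG}_0(\mathscr{D}_1)$, and $\cZ_{\mathscr{D}_2}(\Stab(\mathscr{D}_2))=\bigcup_{\mathscr{A}_2}\{Z_2: Z_2([T])\in\mathbb H_+\ \text{for every simple}\ T\ \text{of}\ \mathscr{A}_2\}$ is covered by cells $\cong\mathbb H_+^{\,n-k}$ indexed by $\mathrm{EG}_0(\mathscr{D}_2)$. Combining with $\Phi$,
\[
\Stab(\mathscr{D}^b(Q),\mathscr{D}_1)\ \cong\ \bigcup_{\mathscr{A}_1,\mathscr{A}_2}U(\mathscr{A}_1)\times\cZ_{\mathscr{D}_2}\!\big(U(\mathscr{A}_2)\big)\ \cong\ \bigcup_{\mathscr{A}_1,\mathscr{A}_2}\mathbb H_+^{\,k}\times\mathbb H_+^{\,n-k}\ =\ \bigcup_{\mathscr{A}_1,\mathscr{A}_2}\mathbb H_+^{\,n},
\]
and identifying each pair $(\mathscr{A}_1,\mathscr{A}_2)$ with the heart $\langle\mathscr{A}_2,\mathscr{A}_1\rangle$ of $\mathscr{D}^b(Q)$ — a vertex of $\mathrm{EG}_0(\mathscr{D}_1)$ in the notation preceding the statement — yields the claimed formula. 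I expect the product decomposition of the second step to be the main obstacle: the delicate point is that the gluing-after-shift construction really produces a full, reasonable (equivalently, support-property) extension independently of the auxiliary choices, which is exactly where the smoothness and properness of $\mathscr{D}^b(Q)$ enter, via \Cref{redundant} and \cite{HalpernLeistner2023StabilityCA}; everything else is routine once the Dynkin hypothesis is in force.
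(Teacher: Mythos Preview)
Your route differs from the paper's. The paper never splits off a $\mathscr{D}_2$-factor; instead it identifies $\Stab(\mathscr{D}^b(Q),\mathscr{D}_1)$ with the set
\[
\cS(\mathscr{D}^b(Q),\mathscr{D}_1)=\bigl\{(Z,\mathscr{P}_1):\exists\,(Z,\mathscr{P})\in\Stab(\mathscr{D}^b(Q))\ \text{with}\ \mathscr{P}_1(\phi)\subset\mathscr{P}(\phi)\ \forall\phi\bigr\},
\]
arguing that the remaining clause of Condition~(1) --- that $(Z|_{\Lambda_2},\mathscr{P}\cap\mathscr{D}_2)$ lie in $\Stab(\mathscr{D}_2)$ --- is automatic because every heart obtained on $\mathscr{D}_2$ is of finite length with finitely many simples, so $Z_2$-semistables are already $Z$-semistable. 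After this reduction the paper invokes \cite[Lemma~5.2]{Bridgeland2006SpacesOS} directly to parametrise each $U(\mathscr{A}_1)=\{(Z,\mathscr{P}_1):\mathscr{P}_1((0,1])=\mathscr{A}_1\}$ by $\mathbb H_+^{n}$, and then takes the union over $\mathscr{A}_1\in\mathrm{EG}_0(\mathscr{D}_1)$, the latter being the vertex set of $\mathrm{EG}(\mathscr{D}_1)$, i.e.\ the hearts of $\mathscr{D}_1$.

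Your product map $\Phi$ is sound, but the last paragraph does not close the gap to the stated formula. First, $\mathrm{EG}_0(\mathscr{D}_1)$ consists of hearts of $\mathscr{D}_1$, not pairs $(\mathscr{A}_1,\mathscr{A}_2)$ and not glued hearts of $\mathscr{D}^b(Q)$; the step ``identify each pair with $\langle\mathscr{A}_2,\mathscr{A}_1\rangle$'' is not a bijection onto anything appearing in the statement (an arbitrary pair need not satisfy the Hom-vanishing required to glue, and distinct even shifts $\mathscr{A}_2,\mathscr{A}_2[2],\ldots$ already have identical image under $\cZ_{\mathscr{D}_2}$, so your $\mathscr{D}_2$-cells are infinitely redundant). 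Second, and more seriously, your own product shows that for a fixed $\mathscr{A}_1$ the fibre is $\mathbb H_+^{k}\times\cZ_{\mathscr{D}_2}(\Stab(\mathscr{D}_2))$, and the second factor is in general \emph{not} a copy of $\mathbb H_+^{\,n-k}$: already when $\mathscr{D}_2\simeq\mathscr{D}^b(\mathrm{pt})$ it is $\mathbb C^{\ast}$, which is not even simply connected. So the product route, as you have run it, establishes a clean factorisation but does not reproduce the paper's per-$\mathscr{A}_1$ cell $U(\mathscr{A}_1)\cong\mathbb H_+^{n}$ indexed by $\mathrm{EG}_0(\mathscr{D}_1)$; for that one needs the $\cS$-reduction and a direct analysis of $U(\mathscr{A}_1)$ along the lines of the paper.
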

\begin{proof}
Let $\sigma_{\r}=(Z,\mathscr{P}_1)\in\Stab(\mathscr{D}^b(Q),\mathscr{D}_1)$ and denote by $\mathscr{A}_1:=\mathscr{P}_1((0,1])$. We define the subset $U(\mathscr{A}_1)\subset\Stab(\mathscr{D}^b(Q),\mathscr{D}_1)$ as 
\[
{U}(\mathscr{A}_1):=\{(Z,\mathscr{P}_1)\in\Stab(\mathscr{D}^b(Q),\mathscr{D}_1)\mid\mathscr{P}_1((0,1])=\mathscr{A}_1\}.
\]
Denote by
\[
\cS(\mathscr{D},\mathscr{D}_1):=\left\{(Z,\mathscr{P}_1)\in\Hom_{\mathbb Z}(\Lambda,\mathbb C)\times\mathrm{Slice}(\mathscr{D}_1)\colon
\begin{aligned}
&\exists\,\sigma=(Z,\mathscr{P})\in\Stab(\mathscr{D}^b(Q)) \text{ such that } \\
&\mathscr{P}_1(\phi)\subset\mathscr{P}(\phi) \text{ for any } \phi\in\mathbb R.
\end{aligned}
\right\}.
\]
We claim that
\[
\Stab({\mathscr{D}^b(Q),\mathscr{D}_1})\cong \cS(\mathscr{D}^b(Q),\mathscr{D}_1).
\]
It is clear that $\Stab({\mathscr{D}^b(Q),\mathscr{D}_1})\subset\cS(\mathscr{D}^b(Q),\mathscr{D}_1)$. Consider $(Z,\mathscr{P}_1)\in\cS(\mathscr{D},\mathscr{D}_1)$ with $(Z,\mathscr{P})\in\Stab(\mathscr{D})$. Based on the discussion above, we have $\langle\mathscr{P}_2:=\mathscr{P}\cap\mathscr{D}_2,\mathscr{P}_1\rangle$ forms a torsion pair. Since $\mathscr{P}_2((0,1])$ is of finite length and has finite number of simple objects, we can apply the strategy from the proof of \Cref{deformation} to conclude that every $Z_2:=Z\vert_{\mathscr{D}_2}$-semistable object is also $Z$-semistable. This implies that $(Z,\mathscr{P}_1)$ can be relatively extended to $(Z,\mathscr{P})\in\Stab(\mathscr{D})$. Condition (2) in \Cref{def-ref} follows trivially due to the finiteness of $\mathscr{P}_i((0,1])$. Therefore, $\Stab({\mathscr{D}^b(Q),\mathscr{D}_1})\subset\cS(\mathscr{D},\mathscr{D}_1)$. According to \cite[Lemma 5.2]{Bridgeland2006SpacesOS}, we then have $U(\mathscr{A}_1)\cong\mathbb H_+^{n}$. Furthermore, based on \cite{MR976638}, any heart of a bounded $t$-structure on $\mathscr{D}^b(Q)$ can be obtained from the standard heart by iterated simple tilts, which leads to the conclusion that
\[
\Stab(\mathscr{D}^b(Q),\mathscr{D}_1)\cong\bigcup_{\mathscr{A}\in\mathrm{EG}_0(\mathscr{D}_1)}\mathbb H_+^{n}.
\]
\end{proof}

\begin{remksub}
Note that Proposition \ref{ADE} implies that the homotopy type of $\Stab(\mathscr{D}^b(Q),\mathscr{D}_1)$ is equivalent to that of $\Stab(\mathscr{D}_1)$. Therefore, if $\mathscr{D}_1$ is of Dynkin type, then $\Stab(\mathscr{D},\mathscr{D}_1)$ is simply connected due to \cite[Theorem 4.7]{MR3281136}. In general, one can also examine spaces of relative stability conditions on other categories related to these quivers.
\end{remksub}

\subsubsection{Categories from curves with positive genus}
Let $C$ be a smooth projective complex curve of genus $g\ge1$. The nonexistence of non-trivial semiorthogonal decompositions of $\mathscr{D}^b(C)$ has been established in \cite[Theorem 1.1]{OKAWA20112869}. As an application of the relative stability, we now provide an alternative proof for this result:
\begin{propsub}[{\cite[Theorem 1.1]{OKAWA20112869}}]
\label{relative-curve}
The bounded derived categories of coherent sheaves on smooth projective curves with positive genus does not admit any non-trivial semiorthogonal decomposition.
\end{propsub}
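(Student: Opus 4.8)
The plan is to argue by contradiction, exploiting the structure of relative stability conditions together with the Riemann--Roch theorem on $C$. Suppose $\mathscr{D}^b(C)=\langle\mathscr{D}_1,\mathscr{D}_2\rangle$ were a non-trivial semiorthogonal decomposition, so $\mathscr{D}_1$ is a non-zero left admissible subcategory with $\mathscr{D}_2={}^{\perp}\mathscr{D}_1\ne0$. The key observation is that since $C$ has genus $g\ge1$, the canonical bundle $\omega_C$ is nef (indeed globally generated for $g\ge1$), so the Serre functor $S=(-)\otimes\omega_C[1]$ acts ``positively''; in particular for $g\ge2$ there are no exceptional objects at all, and for $g=1$ the Serre functor is just $[1]$. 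I would first recall that $\mathscr{D}^b(C)$ carries geometric stability conditions (e.g. the standard $\Stab(C)$ built from $(\deg,\rk)$), and that on a curve \emph{every} stability condition is geometric up to the $\widetilde{\mathrm{GL}}_2^+(\mathbb R)$-action, with heart a tilt of $\mathrm{Coh}(C)$; this is the curve analogue of Lemma~\ref{rel-geo}.

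Next I would restrict such a geometric $\sigma=(Z,\mathscr{P})\in\Stab(C)$ to produce a relative stability condition. The central point is that the skyscraper sheaves $\cO_x$, $x\in C$, are all $\sigma$-stable of the same phase, and they generate $\mathscr{D}^b(C)$; moreover any line bundle $L$ and any $\cO_x$ sit in the short exact sequence coming from $L\to L\to \cO_x$-type triangles, forcing strong constraints on which full triangulated subcategories can be admissible. Concretely, I would show: (i) $\mathscr{D}_1$ must contain, for each $x$, either all or none of the $\cO_x$'s in a way that is ``locally constant'' in $x$, hence by connectedness of $C$ either $\cO_x\in\mathscr{D}_1$ for all $x$ or for no $x$; (ii) in the first case, since the $\cO_x$ generate, $\mathscr{D}_1=\mathscr{D}^b(C)$, contradicting non-triviality; (iii) in the second case, each $\cO_x$ lies in $\mathscr{D}_2={}^{\perp}\mathscr{D}_1$, so $\mathrm{Hom}^{\bullet}(\mathscr{D}_1,\cO_x)=0$ for all $x$, which (using that the $\cO_x$ jointly detect support, together with smoothness of $C$) forces every object of $\mathscr{D}_1$ to be $0$, again a contradiction. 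For step (i) the mechanism is that $i_1^{\ast}(\cO_x)$ and $i_2^{!}(\cO_x)$ fit $\cO_x$ into the canonical triangle of the decomposition; stability of $\cO_x$ together with the phase bookkeeping of a glued/relatively-extended stability condition (Lemma~\ref{expressheart}) forces one of the two truncations to vanish, and the locus where $i_1^{\ast}(\cO_x)=0$ is both open and closed.

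An alternative and perhaps cleaner route, which I would present in parallel, avoids stability conditions entirely on one side: use that $\mathscr{D}_1$ and $\mathscr{D}_2$ must each be \emph{Serre-invariant} (admissibility plus existence of a Serre functor forces this), hence each is preserved by $\otimes\omega_C$; combined with the numerical identity $\chi(E,E')=\int_C \mathrm{ch}(E)^{\vee}\mathrm{ch}(E')\mathrm{td}(C)$ and the orthogonality $\mathrm{Hom}^{\bullet}(\mathscr{D}_2,\mathscr{D}_1)=0$, one gets $\chi(F_2,F_1)=0$ for all $F_i\in\mathscr{D}_i$; pairing classes in $K(\mathscr{D}^b(C))\otimes\mathbb Q\cong \mathbb Q^2$ (spanned by $\mathrm{rk}$ and $\deg$) and using that the Euler pairing on a curve is non-degenerate modulo its (one-dimensional, for $g=1$) radical, one concludes that one of the two numerical images is zero; a non-zero admissible subcategory of $\mathscr{D}^b(C)$ cannot have zero numerical image because it contains a nonzero object whose class survives or, if $g=1$, because it then consists of objects with $\mathrm{ch}=0$, i.e.\ is zero. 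The main obstacle — and the step I expect to require the most care — is exactly this last numerical/support argument in the genus-$1$ case, where the Euler form is degenerate and one genuinely needs the geometricity of stability conditions (or an explicit use of the classification of sheaves on elliptic curves) to rule out a ``phantom-like'' admissible subcategory; for $g\ge2$ the non-degeneracy of $\chi$ together with Serre-invariance makes the contradiction essentially immediate.
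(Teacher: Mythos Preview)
Your first approach is essentially Okawa's original argument and is correct in outline, but it is a genuinely different route from the paper's. The paper does \emph{not} analyse the decomposition triangle of $\cO_x$ or run an open--closed argument on $C$. Instead it uses the relative stability machinery as the engine of the contradiction: since every line bundle and every skyscraper is stable for any $\sigma\in\Stab(\mathscr{D}^b(C))$, one can restrict $\sigma$ to both factors, so by Proposition~\ref{redundant} and Remark~\ref{eredundant} the space $\Stab(\mathscr{D}^b(C),\mathscr{D}_1)$ is nonempty; Macr\`i's theorem then forces the restriction map $(Z,\mathscr{P})\mapsto(Z,\mathscr{P}\cap\mathscr{D}_1)$ to be a $\widetilde{\mathrm{GL}}_2^+(\mathbb R)$--equivariant bijection $\Stab(\mathscr{D}^b(C))\xrightarrow{\sim}\Stab(\mathscr{D}^b(C),\mathscr{D}_1)$; but Proposition~\ref{redundant} also shows that a given $(Z,\mathscr{P}_1)$ admits several relatively extended stability conditions on $\mathscr{D}^b(C)$ (one may replace $\sigma_2$ by $\sigma_2[2m]$), contradicting bijectivity. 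What the paper's approach buys is a genuine application of the new framework; what your first approach buys is a self-contained, classical proof that does not rely on any of the paper's results. Two small technical points in your sketch: in step~(iii) the orthogonality $\cO_x\in{}^{\perp}\mathscr{D}_1$ gives $\mathrm{Hom}^{\bullet}(\cO_x,\mathscr{D}_1)=0$, not the direction you wrote, though Serre duality on a curve (using $\cO_x\otimes\omega_C\cong\cO_x$) repairs this immediately; and in step~(i) you should say why $i_1^{\ast}(\cO_x)$ and $i_2^{!}(\cO_x)$ land in the heart before invoking stability of $\cO_x$---this is exactly where the paper's observation that $\sigma$ restricts to both factors is doing work.

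Your second, ``numerical'' approach has a genuine gap: the assertion that admissibility together with the existence of a Serre functor forces $\mathscr{D}_1$ and $\mathscr{D}_2$ to be Serre-invariant is false. What Bondal--Kapranov give you is only $S({}^{\perp}\mathscr{D}_1)=\mathscr{D}_1^{\perp}$, which is not $\mathscr{D}_2$ in general; the admissible subcategory $\langle\cO\rangle\subset\mathscr{D}^b(\mathbb P^1)$ is a concrete counterexample. For $g=1$ the Serre functor is $[1]$ and your claim happens to hold, but for $g\ge2$ it does not, so the Euler-pairing argument as you outline it does not go through. If you want to salvage a numerical argument in genus $\ge2$, you would need a different reason to control $K_{\mathrm{num}}(\mathscr{D}_i)$, and at that point the skyscraper argument of your first approach is both shorter and cleaner.
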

\begin{proof}
Suppose $\mathscr{D}_1\ne0$ is a proper left admissible subcategory of $\mathscr{D}^b(C)$. Note that all the line bundles and skyscraper sheaves are stable for any stability condition in $\mathscr{D}^b(C)$. Hence, stability conditions on $\mathscr{D}^b(C)$ can be restricted to $\mathscr{D}_1$ and $^{\perp}\mathscr{D}_1$, which implies the existence of stability conditions on $\mathscr{D}_1$ and $^{\perp}\mathscr{D}_1$. By Proposition \ref{redundant} and Remark \ref{eredundant}, there exist relative stability conditions in $\Stab(\mathscr{D}^b(C),\mathscr{D}_1)$. According to \cite{MR2335991}, the action of $\gl2$ on $\Stab(\mathscr{D}^b(C),\mathscr{D}_1)$ is both free and transitive, which leads to the following isomorphism 
\[
 \begin{aligned}
\Psi\colon\Stab(\mathscr{D}^b(C))&\to\Stab(\mathscr{D}^b(C),\mathscr{D}_1)\\
(Z,\mathscr{P})&\mapsto(Z,\mathscr{P}\cap\mathscr{D}_1)
 \end{aligned}   
\]
However, by Proposition \ref{redundant} and Remark \ref{eredundant}, this would lead to a contradiction since the selection of relatively extended stability conditions is not unique. By changing the convention as discussed in \Cref{right-convention}, we conclude that $\mathscr{D}^b(Q)$ is the only non-zero right and left admissible subcategory of $\mathscr{D}^b(Q)$.
\end{proof}

\section{Relative stability and the existence of the dHYM metric on line bundles}
\label{sec: relstab-dHYM}
In this section, we first provide a brief overview of the background on dHYM equations. Then we propose a question concerning the relations between the relative stability and solvability of dHYM equations. Afterward, we examine the question through some concrete cases.

\subsection{Reviews of dHYM equations}
Let $X$ be a smooth projective complex variety and $\omega$ be an ample $\R$-divisor on $X$. 
\begin{defnsub}
Let $\alpha$ be a real $(1,1)$-form on $X$. The \emph{deformed Hermitian-Yang-Mills (dHYM) equation} seeks a function $\phi\colon X\to\R$ such that $\alpha_{\phi}=\alpha+\sqrt{-1}\partial\overline{\partial}\phi$, which satisfies
$$
\Im(e^{-\sqrt{-1}\widehat{\theta}}(\omega+\sqrt{-1}\alpha_{\phi})^n)=0,
$$
where
$$
\int_X(\omega+\sqrt{-1}\alpha_{\phi})^n\in \R_{>0}e^{\sqrt{-1}\widehat{\theta}}.
$$
\end{defnsub}
\begin{remksub}
If we fix a point $p\in X$ and choose a holomorphic coordinate $\{z^i\}$ centered at $p$ such that
$$
\omega=\sqrt{-1}\sum_i\mathrm{d}z^i\wedge\mathrm{d}\overline{z}^i,\quad \alpha_{\phi}=\sqrt{-1}\sum_i\lambda_i\mathrm{d}z^i\wedge\mathrm{d}\overline{z}^i,
$$
then the dHYM equation can be written as
$$
\Theta_{\omega}(\alpha_{\phi})=\widehat{\theta}\pmod{2\pi},
$$
where $\Theta_{\omega}(\alpha_{\phi})=\sum_i\arctan(\lambda_i)$ is called the \emph{Lagrangian phase operator}. 
\end{remksub}
\begin{defnsub}
Let $\mathcal{L}\to(X,\omega)$ be a line bundle. A Hermitian metric $h$ on $\mathcal{L}$ is called a dHYM metric with respect to $\omega$ if the Chern curvature $\Theta_h$ satisfies
$$
\Im\left(e^{-\sqrt{-1}\widehat{\theta}}\left(\omega-\frac{\Theta_h}{2\pi}\right)^n\right)=0,
$$
where
$$
\int_X\left(\omega-\frac{\Theta_h}{2\pi}\right)^n\in\R_{>0}e^{\sqrt{-1}\widehat{\theta}}.
$$
\end{defnsub}
\begin{remksub}
\noindent
\begin{enumerate}[$(1)$]
\item The dHYM metric on a line bundl $\mathcal{L}$ is a special case of the dHYM equation. If we choose real $(1,1)$-form $\alpha=\ch_1(\mathcal{L})$, then a solution of dHYM equation gives a dHYM metric on $\mathcal{L}$. 
\item Given a $\R$-divisor $B$ on $X$, which is called a \emph{$B$-field} in literature, a dHYM metric with respect to $\omega$ and $B$ is a solution of dHYM equation defined by real $(1,1)$-class $\ch_1^B(\mathcal{L})$, where $\ch^B_1(\mathcal{L})=e^{-B}\ch_1(\mathcal{L})$ is the \emph{twisted Chern character}.
\item The higher rank version of dHYM equation was proposed by Collins-Yau in \cite[\&8.1]{collins2018moment}. For a holomorphic vector bundle $\mathcal{E}\to (X,\omega)$, a Hermitian metric $h$ is called a dHYM metric if the Chern curvature $\Theta_h$ satisfies
$$
\Im\left(e^{-\sqrt{-1}\widehat{\theta}}\left(\omega\otimes\id_{\mathcal{E}}-\frac{\Theta_h}{2\pi}\right)^n\right)=0,
$$
where
$$
\int_X\tr_h\left(\omega\otimes\id_{\mathcal{E}}-\frac{\Theta_h}{2\pi}\right)^n\in\R_{>0}e^{\sqrt{-1}\widehat{\theta}}
$$
and the imaginary part is defined using the metric $h$. There are many fundamental results about dHYM metric for the line bundle, such as \cite{MR3694663,collins2018moment,MR4091029}, but the existence of the solution to the higher rank version is still in mystery.
\end{enumerate}
\end{remksub}

In \cite{MR4091029}, the authors establish a Nakai-Moishezon type criterion for the existence of solutions to the dHYM equation on K\"ahler surfaces, which is referred to as the twisted ampleness criterion in \cite{collins2023stabilitylinebundlesdeformed}. The criterion for the higher dimensions is proved in \cite{MR4728704}. Specifically, the twisted ampleness criterion for K\"ahler surfaces is formulated as follows:
\begin{thmsub}
\label{ample-criterion}  
Let $(X,\omega)$ be a K\"ahler surface and $\mathcal{L}$ be a line bundle on $X$ with $\omega\cdot\ch_1(\mathcal{L})>0$. Then $\mathcal{L}$ admits a dHYM metric if and only if for every curve $C\subseteq X$, the following inequality holds:
$$
\Im\left(\frac{Z_C(\mathcal{L})}{Z_X(\mathcal{L})}\right)>0,
$$
where $Z_C(\mathcal{L})=-\int_Ce^{-\sqrt{-1}\omega}\ch(\mathcal{L})$ and $Z_X(\mathcal{L})=-\int_Xe^{-\sqrt{-1}\omega}\ch(\mathcal{L})$.
\end{thmsub}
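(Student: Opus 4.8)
The plan is to reduce the theorem, in both directions, to the existence of a $\mathcal{C}$-subsolution for the dHYM equation in the sense of Sz\'ekelyhidi, and to recognise that such a subsolution exists precisely when the stated numerical inequality holds. Write $\alpha=\ch_1(\mathcal{L})$ and fix the phase $\widehat{\theta}$ by $\int_X(\omega+\sqrt{-1}\alpha)^2\in\R_{>0}e^{\sqrt{-1}\widehat{\theta}}$; since $\Im\int_X(\omega+\sqrt{-1}\alpha)^2=2\,\omega\cdot\alpha>0$ by hypothesis, we have $\widehat{\theta}\in(0,\pi)$ and in particular $\sin\widehat{\theta}>0$. The auxiliary real $(1,1)$-class that will play the central role is $[\omega']:=\cos\widehat{\theta}\,[\omega]+\sin\widehat{\theta}\,[\alpha]$, which is the class of $\Re\big(e^{-\sqrt{-1}\widehat{\theta}}(\omega+\sqrt{-1}\alpha)\big)$.

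The first step is to translate the numerical condition into K\"ahlerness of $[\omega']$. Two bookkeeping identities: on a curve $C$ one has $Z_C(\mathcal{L})=\sqrt{-1}\int_C(\omega+\sqrt{-1}\alpha)$ and $Z_X(\mathcal{L})=\tfrac12\int_X(\omega+\sqrt{-1}\alpha)^2$, so $e^{-\sqrt{-1}\widehat{\theta}}Z_X(\mathcal{L})$ is a positive real number while $\Im\big(e^{-\sqrt{-1}\widehat{\theta}}Z_C(\mathcal{L})\big)=\int_C\omega'$; consequently $\Im\big(Z_C(\mathcal{L})/Z_X(\mathcal{L})\big)>0$ if and only if $\int_C\omega'>0$. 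A short computation with the Hodge index theorem gives $[\omega']^2=[\omega]^2>0$ and $[\omega']\cdot[\omega]>0$, so $[\omega']$ lies in the positive cone of $X$. By the Demailly--P\u{a}un characterisation of K\"ahler classes on a surface, it follows that the numerical condition of the theorem -- positivity of $\int_C\omega'$ on every curve $C$ -- is equivalent to $[\omega']$ being a K\"ahler class.

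The second step connects $[\omega']$ to the PDE. A representative $\tilde{\alpha}\in[\alpha]$ is a $\mathcal{C}$-subsolution of the dHYM equation exactly when, at every point, the $\omega$-eigenvalues $\tilde{\lambda}_1,\tilde{\lambda}_2$ of $\tilde{\alpha}$ satisfy $\arctan\tilde{\lambda}_i>\widehat{\theta}-\tfrac{\pi}{2}$; since also $\arctan\tilde{\lambda}_i<\tfrac{\pi}{2}\le\widehat{\theta}+\tfrac{\pi}{2}$, this says exactly that the eigenvalues $\sqrt{1+\tilde{\lambda}_i^2}\,\cos(\arctan\tilde{\lambda}_i-\widehat{\theta})$ of $\Re\big(e^{-\sqrt{-1}\widehat{\theta}}(\omega+\sqrt{-1}\tilde{\alpha})\big)\in[\omega']$ are all positive. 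Hence a $\mathcal{C}$-subsolution exists if and only if $[\omega']$ contains a K\"ahler form, i.e.\ $[\omega']$ is K\"ahler -- which is the numerical condition. The theorem then splits as follows. If $\mathcal{L}$ carries a dHYM metric with curvature form $\alpha_\phi$, then on a surface the phase relation $\arctan\lambda_1+\arctan\lambda_2\equiv\widehat{\theta}\pmod{\pi}$, together with a connectedness argument using the normalisation $\int_X(\omega+\sqrt{-1}\alpha_\phi)^2\in\R_{>0}e^{\sqrt{-1}\widehat{\theta}}$ to fix the branch, forces $\arctan\lambda_1-\widehat{\theta}=-\arctan\lambda_2\in(-\tfrac{\pi}{2},\tfrac{\pi}{2})$; thus $\alpha_\phi$ is itself a $\mathcal{C}$-subsolution and the numerical condition holds. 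Conversely, if the numerical condition holds then $[\omega']$ is K\"ahler, any K\"ahler representative $\omega'_{\mathrm{sub}}\in[\omega']$ gives the $\mathcal{C}$-subsolution $\tilde{\alpha}=\tfrac{1}{\sin\widehat{\theta}}\big(\omega'_{\mathrm{sub}}-\cos\widehat{\theta}\,\omega\big)\in[\alpha]$, and the a priori estimates and continuity-method argument of \cite{MR4091029} (in the framework of $\mathcal{C}$-subsolutions) yield an honest solution.

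The main obstacle is this last implication: the interior second-order a priori estimate for the dHYM operator in the presence of a $\mathcal{C}$-subsolution is the analytic heart of the theorem, and I would quote it from \cite{MR4091029} rather than reprove it. A smaller point that requires care rather than a citation is the branch issue in the necessity direction -- namely that a dHYM metric on a connected surface with $\omega\cdot\ch_1(\mathcal{L})>0$ has Lagrangian phase exactly $\widehat{\theta}$ and not $\widehat{\theta}-\pi$ -- which follows from continuity on $X$ together with the cohomological normalisation of $\widehat{\theta}$.
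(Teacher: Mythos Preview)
The paper does not prove this statement: Theorem~\ref{ample-criterion} is quoted from \cite{MR4091029} (with the higher-dimensional analogue attributed to \cite{MR4728704}) and is used as a black box in the subsequent computations on Hirzebruch surfaces. There is therefore no ``paper's own proof'' to compare against.

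That said, your sketch is essentially the argument of the cited reference: one rotates $\omega+\sqrt{-1}\alpha$ by $e^{-\sqrt{-1}\widehat{\theta}}$, observes that the real part $[\omega']$ has $[\omega']^2=[\omega]^2>0$ and $[\omega']\cdot[\omega]>0$ (the latter relying on the Hodge index inequality $(\omega\cdot\alpha)^2\ge\omega^2\alpha^2$, which you should make explicit), identifies the curve inequalities with $\int_C\omega'>0$, and invokes Demailly--P\u{a}un to characterise K\"ahlerness. The equivalence between K\"ahlerness of $[\omega']$ and the existence of a $\mathcal{C}$-subsolution is also correct as you describe it. The only substantive gap you acknowledge yourself: the implication from a $\mathcal{C}$-subsolution to an actual solution is the analytic core of \cite{MR4091029}, and you are right to cite rather than attempt it. Your treatment of the branch of the phase in the necessity direction is fine. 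In short, the proposal is a faithful and correct outline of the original proof, not an alternative route.
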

\begin{remksub}
By results in \cite{fu2023deformedhermitianyangmillsflow}, the solution to the dHYM equation on a compact K\"ahler surface always exists on the complement of a finite number of curves of negative self-intersection. As a consequence, the twisted ampleness criterion is inherently satisfied if there is no negative self-intersection curve on $X$. Furthermore, it's also interesting to consider whether it suffices to test negative self-intersection curve in the twisted ampleness criterion or not, and it's true for some cases (Lemma \ref{lemma: twisted ampleness criterion on Hirzebruch surface}).
\end{remksub}

\subsection{Relations between relative stability and the dHYM equation}

In this section, we focus on numerical relative stability conditions, where $\Lambda$ represents the numerical Grothendieck group. Consider $X$ be a smooth projective complex surface with ample $\R$-divisor $\omega$ and $B$ be a $\R$-divisor on $X$. Let $\sigma_{\omega,B}=(Z_{\omega,B},\mathscr{A}_{\omega,B})$ denote the Bridgeland stability with the central charge $Z_{\omega,B}$ given by
$$
Z_{\omega,B}=-\int_Xe^{-\sqrt{-1}\omega}\ch^B
$$ 
and $\mathscr{A}_{\omega,B}=\langle\mathscr{F}_{\omega,B}[1],\mathscr{T}_{\omega,B}\rangle$ where
$$
\begin{aligned}
\mathscr{T}_{\omega,B}&=\{\mathcal{E}\in \coh(X)\mid \mu_{\omega,B,\min}(\mathcal{E})>0\}\\
\mathscr{F}_{\omega,B}&=\{\mathcal{E}\in \coh(X)\mid \mu_{\omega,B,\max}(\mathcal{E})\leq0\}
\end{aligned}
$$
where $\mu_{\omega,B}(\mathcal{E})=\ch_1^B(\mathcal{E})/\ch_0(\mathcal{E})$. It is known that $\sigma_{\omega,B}$ is a geometric Bridgeland stability. Furthermore, in \cite[Theorem 5.10]{dell2023stabilityconditionsfreeabelian}, the author has classified all geometric Bridgeland stabilities for any smooth projective complex surface. 

The relation between Bridgeland stability and the existence of dHYM on line bundles has been a topic of extensive study. The work in \cite{MR4479717} shows that on $\Bl_p\P^2$, every line bundle that admits a dHYM metric with for a given ample $\R$-divisor $\omega$ is Bridgeland stable with respect to $\sigma_{\omega,0}$. However, the converse is not necessarily true. We now present this counter-example to the equivalence of Bridgeland stability and the existence of dHYM on line bundles for smooth projective complex surfaces as follows, with reference to \cite{MR4479717}:

\begin{propsub}\label{prop: surf-with-one}
Let $X$ be a smooth projective complex surface with only one negative self-intersection curve and $\omega$ be an ample $\R$-divisor on $X$. Consider a line bundle $\mathcal{L}$ on $X$ with $\omega\cdot\ch_1(\mathcal{L})>0$. If $\mathcal{L}$ admits a dHYM metric, then $\mathcal{L}$ is $\sigma_{\omega,0}$-stable. However, the converse is not valid.
\end{propsub}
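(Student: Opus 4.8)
The plan is to prove the two assertions in turn, following the method of \cite{MR4479717}. For the first I would classify the possible destabilising subobjects of $\mathcal{L}$ in the heart $\mathscr{A}_{\omega,0}$ of $\sigma_{\omega,0}$ and then convert the destabilising inequality, via Grothendieck--Riemann--Roch, into a twisted ampleness inequality; for the converse I would exhibit an explicit line bundle on $\Bl_p\P^2$. To begin, recall that $\mathscr{A}_{\omega,0}=\langle\mathscr{F}_{\omega,0}[1],\mathscr{T}_{\omega,0}\rangle$ is a tilt of $\mathrm{Coh}(X)$ (\cite[Proposition~10.3]{MR2376815}) and that $\Im Z_{\omega,0}(E)=\omega\cdot\ch_1(E)$, so the hypothesis $\omega\cdot\ch_1(\mathcal{L})>0$ places $\mathcal{L}$ in $\mathscr{T}_{\omega,0}\subset\mathscr{A}_{\omega,0}$ with phase in $(0,1)$. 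If $\mathcal{L}$ were not $\sigma_{\omega,0}$-stable I would take a $\sigma_{\omega,0}$-semistable subobject $A$ with phase at least that of $\mathcal{L}$ (a Jordan--Hölder factor of the maximal destabilising subobject). Passing to cohomology sheaves in $0\to A\to\mathcal{L}\to B\to 0$ shows $A$ is a sheaf lying in $\mathscr{T}_{\omega,0}$; its torsion part embeds into $\mathcal{H}^{-1}(B)\in\mathscr{F}_{\omega,0}$ and hence vanishes, and its image in $\mathcal{L}$ is a rank-one subsheaf, so there is an exact sequence of sheaves
\[
0\to K\to A\to \mathcal{L}(-D)\otimes\mathcal{I}_W\to 0,\qquad K=\mathcal{H}^{-1}(B)\in\mathscr{F}_{\omega,0},
\]
with $D$ an effective divisor satisfying $\omega\cdot(\ch_1(\mathcal{L})-D)>0$ and $W$ zero-dimensional.

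The central step is the reduction to $A=\mathcal{L}(-C)$ with $C$ an irreducible effective curve. The ideal-sheaf twist only adds a non-negative real number to $Z_{\omega,0}$, hence only lowers the phase; the subsheaf $K$ is controlled by combining $\Im Z_{\omega,0}(A)\ge 0$, which forces $-\omega\cdot\ch_1(K)\le\omega\cdot(\ch_1(\mathcal{L})-D)\le\omega\cdot\ch_1(\mathcal{L})$, with the Bogomolov inequality for the Harder--Narasimhan factors of $K$ and the Hodge index theorem, which together bound $\ch_2(K)$ from above and hence $\Re Z_{\omega,0}(K)$ from below; the choice of $A$ as a factor of the maximal destabilising subobject then constrains $K=\mathcal{H}^{-1}(\mathcal{L}/A)$ enough that its contribution cannot raise the phase of $A$ to that of $\mathcal{L}$. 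Writing $D=\sum_i m_iC_i$ and using the additivity of $Z_{\omega,0}$ along the natural filtration of $\mathcal{L}/\mathcal{L}(-D)$ together with the seesaw property reduces to a single irreducible $C$. For $A=\mathcal{L}(-C)$ one has $\mathcal{L}/\mathcal{L}(-C)=\iota_{\ast}(\mathcal{L}\vert_C)$ with $\iota\colon C\hookrightarrow X$, and Grothendieck--Riemann--Roch gives $Z_{\omega,0}(\iota_{\ast}(\mathcal{L}\vert_C))=Z_C(\mathcal{L})+\tfrac12 C^2$; by the seesaw property, $\mathcal{L}(-C)$ destabilises $\mathcal{L}$ if and only if
\[
\Im\!\left(\frac{Z_C(\mathcal{L})}{Z_X(\mathcal{L})}\right)\ \le\ \frac{C^2}{2}\cdot\frac{\omega\cdot\ch_1(\mathcal{L})}{|Z_X(\mathcal{L})|^2}.
\]
If $C^2\ge 0$ the right-hand side is non-negative, and a direct Hodge index computation (using $\omega\cdot\ch_1(\mathcal{L})>0$ and $\omega\cdot(\ch_1(\mathcal{L})-C)>0$) shows the inequality is impossible; this is exactly why the twisted ampleness inequality is automatic for non-negative curves, as in the remark after Theorem~\ref{ample-criterion}. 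If $C^2<0$ then $C$ is the unique negative self-intersection curve of $X$, the right-hand side is negative, and Theorem~\ref{ample-criterion} applied to the dHYM metric on $\mathcal{L}$ gives $\Im(Z_C(\mathcal{L})/Z_X(\mathcal{L}))>0$ — again a contradiction. Hence no destabilising $A$ exists and $\mathcal{L}$ is $\sigma_{\omega,0}$-stable.

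For the converse I would take $X=\Bl_p\P^2$ with $H^2=1$, $E^2=-1$, $H\cdot E=0$, the ample class $\omega=H-\tfrac12 E$, and $\mathcal{L}=\mathcal{O}_X(H)$, so $\omega\cdot\ch_1(\mathcal{L})=\omega\cdot H=1>0$. A short computation gives $Z_{\omega,0}(\mathcal{L})=Z_X(\mathcal{L})=-\tfrac18+\sqrt{-1}$ and $Z_E(\mathcal{L})=\tfrac{\sqrt{-1}}{2}$, whence $\Im(Z_E(\mathcal{L})/Z_X(\mathcal{L}))=-\tfrac{4}{65}<0$, so by Theorem~\ref{ample-criterion} the line bundle $\mathcal{L}$ admits no dHYM metric with respect to $\omega$. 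Nevertheless $\mathcal{L}$ is $\sigma_{\omega,0}$-stable: $Z_{\omega,0}(\mathcal{L})$ lies in the second quadrant, so its phase exceeds $\tfrac12$, while for any proper nonzero subobject $A$, fitting into a sequence $0\to K\to A\to \mathcal{O}_X(H-D)\otimes\mathcal{I}_W\to 0$ as above, the fact that every nonzero effective divisor on $X$ has $\omega$-degree at least $\tfrac12$, together with the Bogomolov and Hodge index bounds on $K$ and on $\mathcal{O}_X(H-D)\otimes\mathcal{I}_W$, forces $\Re Z_{\omega,0}(A)>0$; since also $\Im Z_{\omega,0}(A)>0$ (as $A$ is a nonzero torsion-free sheaf in $\mathscr{T}_{\omega,0}$), the phase of $A$ lies in $(0,\tfrac12)$ and in particular below that of $\mathcal{L}$.

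The main obstacle is the reduction of an arbitrary destabilising subobject to a line bundle $\mathcal{L}(-C)$: one has to rule out that the negative subsheaf $K\in\mathscr{F}_{\omega,0}$ or the higher rank of $A$ could be exploited to destabilise $\mathcal{L}$, and this is precisely where the choice of $A$ as a factor of the maximal destabilising subobject, the Bogomolov inequality and the Hodge index theorem are needed; the $C^2\ge 0$ part of the dichotomy, though elementary, also rests on a somewhat tedious Hodge index estimate. Once the reduction is in place everything else is immediate, and it is the sign of the Grothendieck--Riemann--Roch correction $\tfrac12 C^2$ that governs the outcome: for $C^2<0$ this negative shift makes the dHYM hypothesis (through Theorem~\ref{ample-criterion}) more than sufficient, and the same shift, now working against stability, opens the gap exploited by the counterexample — twisted ampleness failing along the negative curve while the shifted quantity controlling Bridgeland stability remains positive.
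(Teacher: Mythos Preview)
Your proposal is correct and reaches the same conclusion as the paper, but the routes differ in a notable way. The paper's proof is very short: it simply invokes (implicitly, via \cite{MR3423467}) the known fact that for a line bundle $\mathcal{L}\in\mathscr{A}_{\omega,0}$, the only destabilising subobject one needs to test is $\mathcal{L}(-C)$ for the unique negative curve $C$, then writes out the two numerical inequalities---Bridgeland stability becomes $\bigl(C\cdot\ch_1(\mathcal{L})-\tfrac12 C^2\bigr)\bigl(\omega\cdot\ch_1(\mathcal{L})\bigr)>\bigl(\ch_2(\mathcal{L})-\tfrac12\omega^2\bigr)(C\cdot\omega)$, while the twisted ampleness criterion is the same with the $-\tfrac12 C^2$ term removed---and observes that $C^2<0$ makes the implication immediate. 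You instead \emph{rederive} the Arcara--Miles reduction from scratch: analysing the cohomology of a destabilising subobject, peeling off the ideal-sheaf twist and the $\mathscr{F}_{\omega,0}$-part $K$ via Bogomolov and Hodge index, and then handling the $C^2\ge 0$ case directly. This is more self-contained and makes transparent exactly where the Grothendieck--Riemann--Roch shift $\tfrac12 C^2$ enters, at the cost of the ``main obstacle'' you flag---the reduction to rank one---which in the paper is a one-line citation. Your counterexample $(\omega,\mathcal{L})=(H-\tfrac12 E,\,\mathcal{O}_X(H))$ on $\Bl_p\P^2$ is also different from the paper's choice $\omega=\tfrac{1}{\sqrt{3}}(D_1+D_4)$, $\ch_1(\mathcal{L})=2D_4$, but both are valid and verify the failure of the converse by the same mechanism.
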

\begin{proof}
A line bundle $\mathcal{L}\in\mathscr{A}_{\omega,0}$ if and only if $\omega\cdot\ch_1(\mathcal{L})>0$, and it is $\sigma_{\omega,0}$-stable if and only if 
\begin{equation}\label{eq: example in collins-Shi, Bridgeland part0}
\rho(\mathcal{L}(-C))<\rho(\mathcal{L}),
\end{equation}
where $\rho$ denotes the slope function associated with $\sigma_{\omega,0}$. A direct computation shows that Equation (\ref{eq: example in collins-Shi, Bridgeland part0}) is equivalent to 
\begin{equation}\label{eq: example in collins-Shi, Bridgeland part}
\left(C\cdot\ch_1(\mathcal{L})-\frac12C^2\right)\left(\omega\cdot\ch_1(\mathcal{L})\right)>\left(\ch_2(\mathcal{L})-\frac12\omega^2\right)\left(C\cdot\omega\right),
\end{equation}
where $C\subseteq X$ is the unique curve with negative self-intersection.

By the twisted ampleness criterion in Theorem \ref{ample-criterion}, $\mathcal{L}$ admits a dHYM metric if and only if
\begin{equation}\label{eq: example in collins-Shi, dHYM part}
\left(C\cdot\ch_1(\mathcal{L})\right)\left(\omega\cdot\ch_1(\mathcal{L})\right)>\left(\ch_2(\mathcal{L})-\frac12\omega^2\right)\left(C\cdot\omega\right),
\end{equation}
for all curves $C\subseteq X$. As a consequence, Equation (\ref{eq: example in collins-Shi, dHYM part}) implies Equation (\ref{eq: example in collins-Shi, Bridgeland part}), meaning that any line bundle admitting a dHYM metric is $\sigma_{\omega,0}$-stable.

For the converse statement, we consider $\omega=\frac{1}{\sqrt{3}}(D_1+D_4)$ and $\ch_1(\mathcal{L})=2D_4$. Then $\mathcal{L}$ is a line bundle that is Bridgeland stable with respect to $\sigma_{\omega,0}$ but does not admit a dHYM metric by above numerical criterions.
\end{proof}

Based on Proposition \ref{prop: surf-with-one}, in order to establish the converse, one straightforward approach is to contract the hearts of stability conditions. This strategy leads to the development of the concept of relative stability conditions in this paper. We can now formulate the question concerning the relation between relative stability and the solvability of dHYM equation for any smooth projective complex variety:
\begin{quessub}
\label{main-question}
Let $X$ be a smooth projective complex variety. Given an ample $\R$-divisor $\omega$ and a $\R$-divisor $B$. Determine admissible subcategories $0\ne\mathscr{D}_1$ of $\mathscr{D}^b(X)$ for which a line bundle $\mathcal{L}\in\mathrm{Coh}(X)$ is relatively stable with respect to $\sigma_{\r,\omega,B}=(Z_{\omega,B},\mathscr{P}_1:=\mathscr{P}_{\omega,B}\cap\mathscr{D}_1)\in\Stab(X,\mathscr{D}_1)$
if and only if $\cL\in\mathscr{P}_1((0,1])$ admits a dHYM metric with respect to $(\omega, B)$.
\end{quessub}

In the subsequent discussions, we will examine Question \ref{main-question} specifically in the context of projective curves and surfaces. Note that according to Lemma \ref{rel-geo}, for projective surfaces, relative stability conditions are uniquely determined by their relative central charges up to shifting the slicing by $[2n]$. We begin by considering the following trivial cases:

\begin{propsub}
Suppose $X$ is either a smooth projective complex curve with positive genus or a smooth projective complex surface without negative self-intersection curve. Then Question \ref{main-question} is valid for any admissible subcategory.
\end{propsub}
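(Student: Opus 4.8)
The plan is to show that in both cases, for \emph{every} line bundle $\mathcal{L}$ lying in the heart $\mathscr{P}_1((0,1])$, \emph{both} sides of the equivalence in Question \ref{main-question} hold unconditionally, namely that $\mathcal{L}$ is relatively $\sigma_{\r,\omega,B}$-stable \emph{and} that $\mathcal{L}$ admits a dHYM metric with respect to $(\omega,B)$. Once this is established, the asserted equivalence is automatic for every admissible subcategory $0\ne\mathscr{D}_1$.

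For a smooth projective curve $C$ of genus $g\ge1$, Proposition \ref{relative-curve} shows that $\mathscr{D}^b(C)$ admits no non-trivial semiorthogonal decomposition, so $\mathscr{D}_1=\mathscr{D}^b(C)$ is the only nonzero admissible subcategory; hence $\Stab(\mathscr{D}^b(C),\mathscr{D}_1)=\Stab(\mathscr{D}^b(C))$, relative stability is ordinary Bridgeland stability, and every line bundle is stable (as already recorded in the proof of Proposition \ref{relative-curve}). On the dHYM side, on a curve the Lagrangian phase operator has a single eigenvalue, so the equation reduces to $\mathrm{i}\partial\overline\partial\phi=(\tan\widehat\theta)\,\omega-\alpha$ with $\alpha$ a representative of the twisted class $\ch_1^B(\mathcal{L})$; since $\widehat\theta$ is exactly the argument of $\int_C(\omega+\mathrm{i}\alpha)\in\R_{>0}e^{\mathrm{i}\widehat\theta}$, the right-hand side has vanishing integral and is therefore $\partial\overline\partial$-exact, so a solution exists. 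Thus both conditions hold for every line bundle, and the curve case follows.

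Now let $X$ be a smooth projective surface with no negative self-intersection curve. Here the dHYM side is immediate: by the remark following Theorem \ref{ample-criterion} the twisted ampleness criterion is automatically satisfied on such $X$, so every line bundle $\mathcal{L}$ with $\omega\cdot\ch_1^B(\mathcal{L})>0$ — in particular every $\mathcal{L}\in\mathscr{P}_1((0,1])=\mathscr{A}_{\omega,B}\cap\mathscr{D}_1$ — carries a dHYM metric with respect to $(\omega,B)$. For the stability side, since $\mathscr{P}_1(\phi)=\mathscr{P}_{\omega,B}(\phi)\cap\mathscr{D}_1\subseteq\mathscr{P}_{\omega,B}(\phi)$, it is enough to prove that such an $\mathcal{L}$ is $\sigma_{\omega,B}$-stable in $\mathscr{D}^b(X)$: it then has no proper subobject in $\mathscr{P}_{\omega,B}(\phi)$, a fortiori none in $\mathscr{P}_1(\phi)$, so it is relatively stable.

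It therefore remains to prove the $\sigma_{\omega,B}$-stability of every line bundle $\mathcal{L}\in\mathscr{A}_{\omega,B}$, which is the crux of the argument. A standard argument with cohomology sheaves (cf. the proof of Proposition \ref{prop: surf-with-one}) shows that a proper nonzero subobject $A\subsetneq\mathcal{L}$ in $\mathscr{A}_{\omega,B}$ must be a rank-one subsheaf $A=I_Z\otimes\mathcal{L}(-D)$ with $D$ effective and $Z$ zero-dimensional, and that $\omega\cdot D>0$ unless $A=\mathcal{L}$; writing $\ell:=\ch_1^B(\mathcal{L})$ and $e:=\ell-D$, a computation with $Z_{\omega,B}$ turns the inequality $\phi(A)\ge\phi(\mathcal{L})$ into
\[
(\omega\cdot e)\bigl(e\cdot D+\tfrac12 D^2+\mathrm{len}(Z)\bigr)+(\omega\cdot D)\bigl(\mathrm{len}(Z)+\tfrac12(\omega^2-e^2)\bigr)\le0 .
\]
The main obstacle is to rule this out: I expect to show that when $X$ has no negative curve the left-hand side is in fact $\ge\tfrac12(\omega\cdot D)\,\omega^2>0$, by discarding the nonnegative $\mathrm{len}(Z)$ terms, using $D^2\ge0$ (valid since every irreducible curve has nonnegative self-intersection and distinct curves meet nonnegatively), and invoking the Hodge index theorem — the bound $-D_0^2\le(\omega\cdot D)^2/\omega^2$ on the $\omega^\perp$-component $D_0$ of $D$ together with the Cauchy--Schwarz inequality for the negative definite form on $\omega^\perp$ — to absorb the cross term $e\cdot D$. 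This contradiction gives $\sigma_{\omega,B}$-stability, hence relative stability, and finishes the proof. Everything else — the reduction via Proposition \ref{relative-curve} in the curve case, the dHYM input from the remark after Theorem \ref{ample-criterion}, and the passage from Bridgeland stability to relative stability — is formal.
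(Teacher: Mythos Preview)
Your proposal is correct and follows the same overall shape as the paper's proof: in both cases you argue that every line bundle in the heart is simultaneously Bridgeland/relatively stable \emph{and} carries a dHYM metric, so the biconditional is vacuously true. The curve case and the dHYM side of the surface case are handled identically to the paper (indecomposability via Proposition~\ref{relative-curve}, and the remark after Theorem~\ref{ample-criterion} respectively).

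The one substantive difference is on the stability side for surfaces. The paper simply invokes \cite{MR3423467} (Arcara--Miles) to conclude that every line bundle in $\mathscr{A}_{\omega,0}$ is $\sigma_{\omega,0}$-stable when $X$ has no negative curve, and leaves the passage to relative stability implicit. You instead propose a self-contained Hodge-index argument. Your expected bound is in fact correct: writing $\ell=\nu\omega+\ell_0$ and $D=\lambda\omega+D_0$ with $\ell_0,D_0\in\omega^\perp$, the left-hand side of your displayed inequality (with $z=0$) equals $\tfrac{\omega^2}{2}f(D_0)$ for a quadratic $f$ in $D_0$ that is positive definite on $\omega^\perp$; minimising $f$ subject to the constraint $-D_0^2\le\lambda^2\omega^2$ (which encodes $D^2\ge0$) yields $f\ge\lambda\omega^2$ in both the interior and boundary cases, the latter via Cauchy--Schwarz producing a perfect square $(\nu\sqrt{\omega^2}-\sqrt{-\ell_0^2})^2+\omega^2$. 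This gives exactly your claimed lower bound $\tfrac12(\omega\cdot D)\omega^2$. One small imprecision: not every subobject of $\mathcal{L}$ in $\mathscr{A}_{\omega,B}$ is literally a rank-one subsheaf, but the standard reduction (quotienting by $H^{-1}(Q)$ only increases the phase of the subobject) lets you assume this for destabilising purposes, which is what \cite{MR3423467} does as well. Your explicit remark that $\sigma_{\omega,B}$-stability implies relative stability (because $\mathscr{D}_1$ is triangulated, so cones stay in $\mathscr{D}_1$) is a useful clarification the paper omits.
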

\begin{proof}
Let $0\ne \mathscr{D}_1$ be any admissible subcategory of $\mathscr{D}^b(X)$. If $X$ is a smooth complex curve with positive genus, relative stability on $X$ is equivalent to slope stability, and the dHYM equation reduces to the Hermitian-Einstein equation. Consequently, Question \ref{main-question} is valid for $\mathscr{D}_1$ since every line bundle is slope stable and admits Hermitian-Einstein metric, which is a trivial case of results in \cite{MR0861491}.

In the second case, due to \cite{fu2023deformedhermitianyangmillsflow}, every line bundle on a smooth projective surfaces without negative self-intersection curves admits a dHYM metric. Furthermore, according to \cite{MR3423467}, any line bundle $\mathcal{L}$ is relatively stable with respect to $\sigma_{\omega,0}\in\Stab(X,\mathscr{D}_1)$, for $\omega\in\mathrm{Amp}_{\mathbb R}(X)$.
\end{proof}

We now examine Hirzebruch surfaces, which serve as the first non-trivial example where Bridgeland stability and the existence of the dHYM metric on line bundles are not in agreement. To begin, we review the cone structures and the intersection form on Hirzebruch surfaces through the following lemma.
\begin{lemmasub}\label{lemma: facts on Hirzebruch surface}
Let $X=\mathscr{H}_r$ be the Hirzebruch surface and $\{D_1,D_2,D_3,D_4\}$ be the generators of torus-invariant divisors on $X$. Then
\begin{enumerate}[$(1)$]
\item (\cite[Example 4.1.8]{MR2810322}) The Picard group is generated by $\{D_1,D_2,D_3,D_4\}$ with relations
$$
\begin{aligned}
0\sim\div(\chi^{e_1})&=-D_1+D_3\\
0\sim\div(\chi^{e_2})&=rD_1+D_2-D_4.
\end{aligned}
$$
\item (\cite[Proposition 4.3.3]{MR2810322}) The effective cone of $X$ is given by
$$
\Eff(X)=\{aD_1+bD_2\mid a,b\ge0\}.
$$
\item (\cite[Example 6.1.17]{MR2810322}) The ample cone of $X$ is given by 
$$
\Amp(X)=\{\alpha D_1+\beta D_4\mid \alpha,\beta>0\}.
$$
\item (\cite[Example 6.3.6]{MR2810322}) The intersection matrix of $D_1$ and $D_2$ is given by
$$
\begin{pmatrix}
0&1\\
1&-r
\end{pmatrix}.
$$
\end{enumerate}
\end{lemmasub}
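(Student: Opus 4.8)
The plan is to recall the fan of $X=\mathscr{H}_r$ and then extract all four assertions from the standard toric dictionary; since each of (1)--(4) is recorded in the cited places of \cite{MR2810322}, the proof is a matter of bookkeeping rather than of ideas, and one may alternatively just invoke those references. First I would fix the fan $\Sigma$ in $\R^2$ with primitive ray generators $u_1=-e_1+re_2$, $u_2=e_2$, $u_3=e_1$, $u_4=-e_2$ and maximal cones $\langle u_1,u_2\rangle$, $\langle u_2,u_3\rangle$, $\langle u_3,u_4\rangle$, $\langle u_4,u_1\rangle$ (this is the normalization of \cite[Example 4.1.8]{MR2810322}), with $D_i=D_{u_i}$; a direct check that each maximal cone has index one and that the four cones cover $\R^2$ shows that $X$ is a smooth complete toric surface, and the cyclic adjacency of the $D_i$ is $D_1\,$--$\,D_2\,$--$\,D_3\,$--$\,D_4\,$--$\,D_1$.

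For (1) and (2) I would use that, $X$ being smooth and complete, the divisor class sequence $0\to M\to\Z^{4}\to\pic(X)\to0$ with $m\mapsto(\langle m,u_i\rangle)_i$ is exact; evaluating at $m=e_1$ and $m=e_2$ yields $\div(\chi^{e_1})=-D_1+D_3$ and $\div(\chi^{e_2})=rD_1+D_2-D_4$, which is (1), and in particular $[D_3]=[D_1]$, $[D_4]=r[D_1]+[D_2]$, with $\{[D_1],[D_4]\}$ a $\Z$-basis of $\pic(X)$. Then $\Eff(X)$, being generated by the classes of the torus-invariant prime divisors $[D_1],\dots,[D_4]$, lies inside $\cone([D_1],[D_2])$ by these relations together with $r\ge0$, while $[D_1],[D_2]$ are themselves effective; hence $\Eff(X)=\{aD_1+bD_2\mid a,b\ge0\}$, which is (2).

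For (4) I would invoke the intersection rules on a smooth complete toric surface: $D_i\cdot D_j=1$ for cyclically adjacent indices and $0$ otherwise, and $D_i^2=b_i$ where $u_{i-1}+u_{i+1}+b_iu_i=0$; here $u_4+u_2+b_1u_1=0$ forces $b_1=0$ and $u_1+u_3+b_2u_2=0$ forces $b_2=-r$, so the intersection matrix of $D_1,D_2$ is $\left(\begin{smallmatrix}0&1\\1&-r\end{smallmatrix}\right)$. Finally, for (3), I would identify the Mori cone $\overline{\NE}(X)$: among the torus-invariant curves $D_i$, the classes span the two rays $\R_{\ge0}[D_1]$ (a fiber, $D_1^2=0$) and $\R_{\ge0}[D_2]$ (the negative section, $D_2^2=-r\le0$), while $[D_3]=[D_1]$ and $[D_4]=r[D_1]+[D_2]$ lie inside, so $\overline{\NE}(X)=\cone([D_1],[D_2])$; writing a class as $\alpha D_1+\beta D_4$ and using $D_4\cdot D_1=1$, $D_4\cdot D_2=0$ (from $[D_4]=r[D_1]+[D_2]$ and $D_1\cdot D_2=1$, $D_2^2=-r$), one gets $(\alpha D_1+\beta D_4)\cdot D_1=\beta$ and $(\alpha D_1+\beta D_4)\cdot D_2=\alpha$, so Kleiman's criterion gives $\Amp(X)=\{\alpha D_1+\beta D_4\mid\alpha,\beta>0\}$. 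I do not expect a genuine obstacle here; the only thing requiring care is consistency of conventions — keeping a single normalization of the fan throughout, and in (3) correctly singling out which two of the four boundary divisors (a fiber $D_1$ and the negative section $D_2$) span the extremal rays of $\overline{\NE}(X)$, after which the ample cone is obtained simply by dualizing — and should a fully detailed verification be deemed unnecessary, one may just cite the indicated results of \cite{MR2810322}.
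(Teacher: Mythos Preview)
Your proposal is correct and carefully executed; the fan you fix reproduces the relations, intersection numbers, effective cone, and ample cone exactly as stated. The paper itself gives no proof of this lemma at all --- each item is simply attributed to the indicated location in \cite{MR2810322} --- so your write-up is strictly more than what the paper does, and you already anticipate this by noting that one may alternatively just invoke those references.
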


Based on Lemma \ref{lemma: facts on Hirzebruch surface}, we can reduce the testing process based on the twisted ampleness criterion in Theorem \ref{ample-criterion} as follows:

\begin{lemmasub}\label{lemma: twisted ampleness criterion on Hirzebruch surface}
Let $X=\mathscr{H}_r$ be Hirzebruch surface. Then for the twisted ampleness criterion in Theorem \ref{ample-criterion} on $X$, it suffices to test the only negative self-intersection curve. 
\end{lemmasub}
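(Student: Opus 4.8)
The plan is to exploit two facts: the twisted ampleness inequality of Theorem \ref{ample-criterion} is \emph{linear} in the tested curve class, and the effective cone of a Hirzebruch surface is generated by exactly two curve classes, one of which is the negative section.

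First I would put the criterion into intersection-theoretic form. Exactly as in the computation in the proof of Proposition \ref{prop: surf-with-one} (expand $Z_C(\mathcal{L})=-\int_Ce^{-\sqrt{-1}\omega}\ch(\mathcal{L})$ and $Z_X(\mathcal{L})=-\int_Xe^{-\sqrt{-1}\omega}\ch(\mathcal{L})$ and take the imaginary part of $Z_C(\mathcal{L})\overline{Z_X(\mathcal{L})}$), for a line bundle $\mathcal{L}$ with $\omega\cdot\ch_1(\mathcal{L})>0$ the condition $\Im\bigl(Z_C(\mathcal{L})/Z_X(\mathcal{L})\bigr)>0$ is equivalent to
\[
f(C):=\bigl(C\cdot\ch_1(\mathcal{L})\bigr)\bigl(\omega\cdot\ch_1(\mathcal{L})\bigr)-\bigl(C\cdot\omega\bigr)\bigl(\ch_2(\mathcal{L})-\tfrac12\omega^2\bigr)>0 ,
\]
i.e.\ the inequality (\ref{eq: example in collins-Shi, dHYM part}); and $f$ is manifestly a linear functional of the divisor class $C$.

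Next I would invoke Lemma \ref{lemma: facts on Hirzebruch surface}. By part (2) every curve on $X=\mathscr{H}_r$ has class in $\Eff(X)=\{aD_1+bD_2\mid a,b\ge0\}$, and by part (4) the intersection numbers are $D_1^2=0$, $D_1\cdot D_2=1$, $D_2^2=-r$, so (for $r\ge1$) the negative section $D_2$ is the unique negative self-intersection curve while $D_1$ is a fiber. Writing a curve $C$ as $\alpha D_1+\beta D_2$ with $\alpha,\beta\ge0$ not both zero, linearity gives $f(C)=\alpha f(D_1)+\beta f(D_2)$; hence, \emph{provided} $f(D_1)>0$ holds unconditionally, the requirement ``$f(C)>0$ for every curve $C$'' is equivalent to the single requirement $f(D_2)>0$, which is exactly the criterion tested on the negative curve.

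The one genuine computation, and thus the main point, is to check $f(D_1)>0$ for all admissible data. Here I would use Lemma \ref{lemma: facts on Hirzebruch surface}(3): write $\omega=aD_1+bD_2$, so that ampleness reads $b=\omega\cdot D_1>0$ and $a-rb=\omega\cdot D_2>0$, and write $\ch_1(\mathcal{L})=xD_1+yD_2$. A direct expansion using $D_1^2=0$, $D_1\cdot D_2=1$, $D_2^2=-r$ collapses to
\[
f(D_1)=\bigl(y^2+b^2\bigr)\bigl(a-\tfrac r2 b\bigr),
\]
which is strictly positive because $b>0$ and $a>rb>\tfrac r2 b$. This completes the reduction. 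I do not expect a real obstacle: the algebra for $f(D_1)$ is a couple of lines, and the only thing worth flagging is that the two generators $D_1,D_2$ of $\Eff(X)$ are themselves irreducible curves (a fiber and the negative section), so they literally occur among the curves over which the criterion ranges — which is why the conclusion is genuinely ``test the negative curve'' rather than merely ``test the extremal rays of the effective cone''.
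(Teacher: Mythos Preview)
Your proposal is correct and follows essentially the same approach as the paper: both rewrite the criterion as a linear functional in the curve class, verify by direct intersection computation that it is automatically positive on the fiber class $D_1$, and then use that $\Eff(X)$ is generated by $D_1$ and the negative section $D_2$ to reduce the test to $D_2$ alone. The only differences are cosmetic---the paper works in the basis $\{D_1,D_4\}$ for $\omega$ and $\mathcal{L}$ rather than $\{D_1,D_2\}$, and does not factor the resulting expression as cleanly as your $(y^2+b^2)(a-\tfrac r2 b)$.
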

\begin{proof}
Let $\omega=\alpha D_1+\beta D_4$ be an ample $\R$-divisor on $X$ and $\mathcal{L}=kD_3+\ell D_4$ be a line bundle such that $\omega\cdot\ch_1(\mathcal{L})>0$. For any curve $C\subseteq X$, the twisted ampleness criterion for $C$ can be reformulated as
\begin{equation}\label{eq: twisted ampleness criterion on Hirzebruch surface1}
\left(C\cdot\ch_1(\mathcal{L})\right)\left(\omega\cdot\ch_1(\mathcal{L})\right)>\left(\ch_2(\mathcal{L})-\frac12\omega^2\right)\left(C\cdot\omega\right).
\end{equation}
Let $C=D_1$, and then by Lemma \ref{lemma: facts on Hirzebruch surface} the Equation (\ref{eq: twisted ampleness criterion on Hirzebruch surface1}) can be written as 
$$
\ell(\alpha\ell+\beta k+r\beta\ell)>\frac12(2k\ell+r\ell^2-2\alpha\beta-r\beta^2)\beta,
$$
which is equivalent to 
\begin{equation}\label{eq: twisted ampleness criterion on Hirzebruch surface2}
\alpha\ell^2+\alpha\beta^2+\frac12(r\beta\ell^2+r\beta^3)>0.
\end{equation}
It is clear that Equation (\ref{eq: twisted ampleness criterion on Hirzebruch surface2}) holds for any $k,\ell\in\Z$ since $\alpha,\beta>0$ and thus it concludes the proof, as the twisted ampleness criterion is linear with respect to the intersection with $C$.
\end{proof}

\begin{propsub}\label{prop: possible admissible subcategory for question on Hirzebruch surface}
Let $X=\mathscr{H}_r$ be the Hirzebruch surface and $\omega$ be an ample $\R$-divisor on $X$. Then the following admissible subcategories in $\mathscr{D}^b(X)$, which are generated by exceptional objects, satisfy the requirements in Question \ref{main-question}:
\begin{enumerate}[$(1)$]
\item $\mathscr{D}_0=\langle\mathcal{L}\rangle$, where $\mathcal{L}$ is a line in the heart of $\sigma_{\omega,0}$ and admits a dHYM metric.
\item $\mathscr{D}_0=\langle\mathcal{O}_X(\ell D_4),\mathcal{O}_X(D_1+\ell D_4)\rangle$, where $\ell^2<2s\alpha^2/r$.
\end{enumerate}
\end{propsub}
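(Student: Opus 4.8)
The plan is to verify Question \ref{main-question} for each of the two families of admissible subcategories by comparing, object by object, the numerical condition for relative $\sigma_{\r,\omega,0}$-stability of a line bundle $\mathcal{L}$ inside $\mathscr{D}_0$ against the numerical condition for $\mathcal{L}$ to admit a dHYM metric, the latter being supplied by Theorem \ref{ample-criterion} together with the reduction in Lemma \ref{lemma: twisted ampleness criterion on Hirzebruch surface} (so that only the single negative self-intersection curve $D_1$ must be tested). The key structural point is that relative stability with respect to $\mathscr{D}_0$ is much coarser than Bridgeland stability: a line bundle $\mathcal{L}\in\mathscr{P}_1((0,1])$ is relatively stable iff it is a simple object of the relative slicing $\mathscr{P}_1=\mathscr{P}_{\omega,0}\cap\mathscr{D}_0$, so the only potential destabilizers are subobjects lying in $\mathscr{D}_0$ itself. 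When $\mathscr{D}_0$ is generated by exceptional objects, $\mathscr{D}_0$ is governed by a very short list of such objects, and one computes the slope inequalities against those finitely many candidates.

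For part (1), with $\mathscr{D}_0=\langle\mathcal{L}\rangle$: here $\mathscr{D}_0$ is generated by a single exceptional object, so $\mathscr{P}_1$ has (up to shift) exactly one nonzero slice, namely the one containing $\mathcal{L}$, and $\mathcal{L}$ is automatically a simple object of it. Hence $\mathcal{L}$ is relatively stable tautologically — there are no nonzero proper subobjects of $\mathcal{L}$ inside $\mathscr{D}_0$. Since $\mathcal{L}$ is assumed to lie in the heart of $\sigma_{\omega,0}$ (equivalently $\omega\cdot\ch_1(\mathcal{L})>0$) and to admit a dHYM metric, both sides of the "iff" in Question \ref{main-question} hold for $\mathcal{L}$; and for any line bundle in $\mathscr{P}_1((0,1])$ other than $\mathcal{L}$ (up to the shift $[2n]$ from Lemma \ref{rel-geo}) the statement is vacuous because $\mathscr{D}_0$ contains no such object. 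So part (1) reduces to checking that $\mathscr{D}_0=\langle\mathcal{L}\rangle$ is genuinely left admissible and that the relative stability condition $\sigma_{\r,\omega,0}$ exists, which follows from Proposition \ref{redundant} and Remark \ref{eredundant} since $\mathscr{D}^b(X)$ is the derived category of a smooth projective surface.

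For part (2), with $\mathscr{D}_0=\langle\mathcal{O}_X(\ell D_4),\mathcal{O}_X(D_1+\ell D_4)\rangle$: this is the interesting case. The exceptional pair generates a category equivalent to $\mathscr{D}^b$ of a Kronecker-type quiver, and a line bundle $\mathcal{M}=\mathcal{O}_X(D_1+\ell D_4)$ sitting in $\mathscr{P}_1((0,1])$ can only be destabilized within $\mathscr{D}_0$ by (shifts of) $\mathcal{O}_X(\ell D_4)$, via a short exact sequence coming from a section in $\Hom(\mathcal{O}_X(\ell D_4),\mathcal{O}_X(D_1+\ell D_4))$. So I would first pin down, using $\mathscr{P}_1=\mathscr{P}_{\omega,0}\cap\mathscr{D}_0$ and Lemma \ref{rel-geo}(2)–(3), exactly which shifts of the two generators lie in $\mathscr{P}_1((0,1])$ and in what phase order; then write the relative-stability inequality $\rho(\mathcal{O}_X(\ell D_4))<\rho(\mathcal{M})$ (or the reverse, if $\mathcal{O}_X(\ell D_4)$ appears as a quotient) using the slope function attached to $Z_{\omega,0}$, exactly as in the computation (\ref{eq: example in collins-Shi, Bridgeland part0})–(\ref{eq: example in collins-Shi, Bridgeland part}) in Proposition \ref{prop: surf-with-one} but with $C$ replaced by the relevant divisor class $D_1$. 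In parallel, Lemma \ref{lemma: twisted ampleness criterion on Hirzebruch surface} tells us $\mathcal{M}$ admits a dHYM metric iff the inequality (\ref{eq: twisted ampleness criterion on Hirzebruch surface1}) holds for $C=D_1$, i.e. (\ref{eq: twisted ampleness criterion on Hirzebruch surface2}). The claim is that, under the hypothesis $\ell^2<2s\alpha^2/r$ (with $s$ the coefficient of $D_4$ in $\omega=\alpha D_1+sD_4$, matching Lemma \ref{lemma: facts on Hirzebruch surface}), the relative-stability inequality and the dHYM inequality become algebraically equivalent — the numerical gap that produced the counterexample in Proposition \ref{prop: surf-with-one} is precisely the "$-\tfrac12 C^2$" term, and when the destabilizing class is $D_1$ with $D_1^2=0$ this term vanishes, so the two criteria coincide.

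\textbf{Main obstacle.} The delicate step is part (2): identifying the correct torsion pair / phase ordering of the two exceptional generators inside $\mathscr{P}_1((0,1])$ so that the \emph{right} slope inequality is written down (subobject versus quotient matters for the direction of the inequality), and then checking that the constraint $\ell^2<2s\alpha^2/r$ is exactly what is needed both to keep $\mathcal{O}_X(D_1+\ell D_4)$ inside the heart with the claimed phase and to force equivalence of the stability and dHYM inequalities. This is essentially a bookkeeping computation with the intersection form of Lemma \ref{lemma: facts on Hirzebruch surface}, but getting the inequalities to line up — and confirming there are no other destabilizing subobjects in $\mathscr{D}_0$ beyond shifts of the first generator — is where the real content lies. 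I would also need to confirm that $\langle\mathcal{O}_X(\ell D_4),\mathcal{O}_X(D_1+\ell D_4)\rangle$ is indeed an exceptional pair on $\mathscr{H}_r$ (a standard cohomology vanishing computation via Lemma \ref{lemma: facts on Hirzebruch surface}) so that it generates an admissible subcategory to which Proposition \ref{redundant} applies.
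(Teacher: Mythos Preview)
Your treatment of part (1) is correct and essentially matches the paper's.

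For part (2) there is a genuine gap. You only consider the two generating line bundles $\mathcal{O}_X(\ell D_4)$ and $\mathcal{O}_X(D_1+\ell D_4)$, but the admissible subcategory $\mathscr{D}_0=\langle\mathcal{O}_X(\ell D_4),\mathcal{O}_X(D_1+\ell D_4)\rangle$ contains the full infinite family $\{\mathcal{O}_X(kD_1+\ell D_4)\mid k\in\Z\}$: after twisting by $\mathcal{O}_X(-\ell D_4)$ the pair becomes $\langle\mathcal{O}_X,\mathcal{O}_X(D_1)\rangle=\pi^*\langle\mathcal{O}_{\P^1},\mathcal{O}_{\P^1}(1)\rangle$, which generates $\pi^*\mathscr{D}^b(\P^1)$ and hence contains every $\mathcal{O}_X(kD_1)$. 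Question \ref{main-question} must be verified for each such line bundle lying in the heart, not just for the two generators.

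Your proposed mechanism is also misdirected. You want the relative-stability inequality and the dHYM inequality to coincide because the ``$-\tfrac12 C^2$'' term vanishes when the destabilizing class is $D_1$ with $D_1^2=0$. But the curve $C$ in the twisted ampleness criterion (Lemma \ref{lemma: twisted ampleness criterion on Hirzebruch surface}) is the negative section $D_2$, with $D_2^2=-r$, not $D_1$; that gap term does not vanish, and the two inequalities are genuinely different. The paper's actual route is much simpler and avoids any direct comparison: writing $\omega=\alpha(D_1+sD_4)$, the dHYM condition for $\mathcal{O}_X(kD_1+\ell D_4)$ is a quadratic $f_\omega(k,\ell)>0$ in $k$ with discriminant $s(sr+2)(r\ell^2-2s\alpha^2)$. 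The hypothesis $\ell^2<2s\alpha^2/r$ forces this discriminant to be negative, so $f_\omega(k,\ell)>0$ for \emph{every} $k$; thus every line bundle in $\mathscr{D}_0$ lying in the heart admits a dHYM metric. By Proposition \ref{prop: surf-with-one} each such line bundle is then $\sigma_{\omega,0}$-stable, hence a fortiori relatively stable. Both sides of the ``iff'' hold unconditionally, and the equivalence is trivially satisfied.
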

\begin{proof}
For convenience we write $\omega=\alpha(D_1+s D_4)$, where $\alpha,s>0$. By Proposition \ref{prop: surf-with-one}, every line bundle $\mathcal{L}$ in the heart of $\sigma_{\omega,0}$ and admits a dHYM metric with respect to $\omega$ is $\sigma_{\omega,0}$-stable. Thus for the  the admissible subcategory $\mathscr{D}_0=\langle\mathcal{L}\rangle$, the only line bundle is $\mathcal{L}$ itself, and thus $\mathscr{D}_0$ satisfies the requirements in Question \ref{main-question}.

For case (2), consider a line bundle $\mathcal{L}=\mathcal{O}_X(kD_1+\ell D_2)$, where $k,\ell\in\Z$. The twisted ampleness criterion asserts that $\mathcal{L}$ admits a dHYM metric if and only if
\begin{equation}\label{eq: twisted ampleness criterion on Hirzebruch surface}
f_{\omega}(k,\ell):=sk^2+rs\ell k+\frac12\left((s^2r+2s)\alpha^2-r\ell^2\right)>0.
\end{equation}
On the other hand, according to the results in \cite{MR3423467}, $\mathcal{L}$ is $\sigma_{\omega,0}$-stable if and only if
\begin{equation}\label{eq: Bridgeland stability on Hirzebruch surface}
g_{\omega}(k,\ell):=k^2+r\ell k+\frac12\left((r+2)\alpha^2-r\ell^2\right)+\frac{r}{2}(sk+\ell+sr\ell)>0.
\end{equation}

For any fixed $\ell\in\Z$, the equation $f_{\omega}(k,\ell)=0$ admits solutions in $\R$ if and only if
$$
\Delta=s(sr+2)(r\ell^2-2s\alpha^2)\ge0.
$$
Thus for any $k\in\Z$, the line $\mathcal{L}=\mathcal{O}_X(kD_1+\ell D_2)$ which is in the heart of $\sigma_{\omega,0}$ is $\sigma_{\omega,0}$-Bridgeland stable if $\ell^2<2s\alpha^2/r$. For the admissible subcategory $\mathscr{D}_0=\langle\mathcal{O}_X(\ell D_4),\mathcal{O}_X(D_1+\ell D_4)\rangle$, the line bundles in $\mathscr{D}_0$ are $\{\mathcal{O}_X(kD_1+\ell D_4)\mid k\in\Z\}$, and all of them admit dHYM metric and also $\sigma_{\omega,0}$-stable. 
\end{proof}
\begin{remksub}
As conjectured in \cite[Question 3.3]{collins2023stabilitylinebundlesdeformed}, the twisted ampleness criterion should always implies the Bridgeland stability on any smooth projective complex surface. If so, then the above constructions can be generalized to any smooth projective complex surface by finding some special exceptional collections among line bundles which admit dHYM metric. 
\end{remksub}
The subsequent result demonstrates that the counter-example presented in \cite{MR4479717} is a general phenomenon. Moreover, it shows that in order to determine admissible subcategories that address Question \ref{main-question}, it is essential to consider relative stability conditions.

\begin{propsub}\label{prop: counter-example for large volume case}
Let $X=\mathscr{H}_r$ be the Hirzebruch surface. For any ample $\R$-divisor $\omega$ on $X$, there always exists a $\sigma_{\omega,0}$-stable line bundle in the heart that does not admit a dHYM metric with respect to $\omega$.
\end{propsub}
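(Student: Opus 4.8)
The plan is to reduce to the explicit numerical criteria already set up for Hirzebruch surfaces and then to produce, by a lattice-point argument, a line bundle satisfying the Bridgeland inequality while failing the twisted-ampleness one. Write $\omega = \alpha(D_1 + sD_4)$ with $\alpha, s > 0$ and consider line bundles $\mathcal{L} = \mathcal{O}_X(kD_1 + \ell D_2)$, $(k,\ell)\in\Z^2$. As recorded in the proof of Proposition \ref{prop: possible admissible subcategory for question on Hirzebruch surface} (combining Lemma \ref{lemma: facts on Hirzebruch surface}, Theorem \ref{ample-criterion}, Lemma \ref{lemma: twisted ampleness criterion on Hirzebruch surface} and \cite{MR3423467}), the three conditions ``$\mathcal{L}$ lies in the heart of $\sigma_{\omega,0}$'', ``$\mathcal{L}$ admits a dHYM metric with respect to $\omega$'' and ``$\mathcal{L}$ is $\sigma_{\omega,0}$-stable'' are equivalent, respectively, to $\omega\cdot(kD_1+\ell D_2) > 0$, to $f_\omega(k,\ell) > 0$, and to $g_\omega(k,\ell) > 0$, for the explicit quadratic polynomials $f_\omega,g_\omega$ written there. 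Thus one must exhibit $(k,\ell)\in\Z^2$ with
\[
\omega\cdot(kD_1+\ell D_2) > 0,\qquad g_\omega(k,\ell) > 0,\qquad f_\omega(k,\ell)\le 0 .
\]
By Proposition \ref{prop: surf-with-one} (the unique negative self-intersection curve on $\mathscr{H}_r$ being the section $D_2$), every line bundle of the heart admitting a dHYM metric is already $\sigma_{\omega,0}$-stable, so on the heart $\{f_\omega > 0\}\subseteq\{g_\omega > 0\}$, and the task is to find an integral point of the heart in the ``gap'' $\{f_\omega\le 0\}\cap\{g_\omega > 0\}$.

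The first idea is to push the Chern class to infinity along a lattice ray. Let $q_f,q_g$ be the degree-two parts of $f_\omega,g_\omega$; both are indefinite binary forms, and a short computation gives $q_f = s\,q_g + \tfrac{r(s-1)}{2}\ell^2$. If one can choose a primitive $\mathbf v=(k_0,\ell_0)\in\Z^2$ with $q_f(\mathbf v) < 0 < q_g(\mathbf v)$ and $\omega\cdot(k_0D_1+\ell_0D_2) > 0$, then along $(k,\ell)=t\mathbf v$ with $t\to+\infty$ one has $f_\omega\to-\infty$, $g_\omega\to+\infty$ and $\omega\cdot(kD_1+\ell D_2)\to+\infty$, so the three conditions hold for $t\gg 0$. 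When $s < 1$ the term $\tfrac{r(s-1)}{2}\ell^2$ is nonpositive, so the cone $\{q_f < 0\}$ is strictly wider than the concentric cone $\{q_g < 0\}$; such a $\mathbf v$ (with $\ell_0 > 0$ and $k_0 > 0$, hence automatically in the heart) therefore exists, and this settles the case $s < 1$.

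The case $s\ge 1$ is the crux. The relation $q_f = s\,q_g + \tfrac{r(s-1)}{2}\ell^2$ now forces $q_f(\mathbf v) < 0\Rightarrow q_g(\mathbf v) < 0$, so no lattice ray works and the gap $\{f_\omega\le 0\}\cap\{g_\omega > 0\}\cap\{\text{heart}\}$ is bounded in $\R^2$. One argues directly with the polynomials: for $\ell$ just past the threshold $r\ell^2\approx 2s\alpha^2$ at which $f_\omega(\,\cdot\,,\ell)\le 0$ first becomes solvable --- the discriminant condition isolated in the proof of Proposition \ref{prop: possible admissible subcategory for question on Hirzebruch surface} --- the set $\{k\in\R:f_\omega(k,\ell)\le 0\}$ is a nonempty interval meeting the heart, and near its relevant endpoint $g_\omega$ stays positive (essentially because along $D_2$ the Bridgeland inequality exceeds the twisted-ampleness one by the strictly positive term $-\tfrac12 D_2^2\cdot(\omega\cdot\ch_1(\mathcal{L}))$, cf.\ Equations (\ref{eq: example in collins-Shi, Bridgeland part}) and (\ref{eq: example in collins-Shi, dHYM part})), so one needs an integer $k$ in this interval. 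For $r\ge 2$, and more generally for ``most'' $\omega$, the interval is long enough that a suitable $\ell$ puts an integer in it; for the remaining arithmetically special polarizations (e.g.\ $r=1$, $s=1$, the case of the example on $\Bl_p\P^2$ revisited in Proposition \ref{prop: surf-with-one}) one lets $\ell$ grow and invokes equidistribution mod $1$ of the endpoint, which is a nonlinear function of $\ell$.

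The step I expect to be the real obstacle is precisely this last one: proving, uniformly over every ample class on $\mathscr{H}_r$, that the gap region --- which can be genuinely bounded, with size scaling with $\omega$ --- always contains a lattice point lying in the heart. The ray construction disposes cleanly of a large range of $\omega$, but the residual polarizations demand a delicate, essentially arithmetic, lattice-point count.
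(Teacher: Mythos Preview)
Your overall strategy---reduce to the two quadratic inequalities $f_\omega\le 0$ and $g_\omega>0$ and exhibit a lattice point in the gap---is the same as the paper's, and your acknowledged obstacle is exactly the crux. The lattice-ray argument for $s<1$ is a clean shortcut the paper does \emph{not} take: it is correct and disposes of those polarisations without any arithmetic. (One small slip: your claim that for $s\ge 1$ the gap region is bounded fails at $s=1$, where $q_f=q_g$ and the lower-order terms make the gap an unbounded strip of asymptotically constant width; this does not affect the rest of the plan.)

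For the remaining polarisations the paper is more concrete than your sketch, and organised differently. It does not split on $s$ at all: for every $\omega$ it fixes $k=\lfloor k_\ell^+\rfloor$ (so $f_\omega\le 0$ automatically) and writes $g_\omega(\lfloor k_\ell^+\rfloor,\ell)$ explicitly as a function of $\ell$ and the fractional part $\epsilon_\ell=k_\ell^+-\lfloor k_\ell^+\rfloor$. The formula shows that the coefficient of the dominant square-root term is $\tfrac{r}{4}-\epsilon_\ell$, which is why the case split is on $r$, not $s$: for $r\ge 4$ one has $\tfrac{r}{4}-\epsilon_\ell\ge 0$ for every $\ell$, so large $\ell$ works outright; for $1\le r<4$ one must produce $\ell$ with $\epsilon_\ell<r/4$. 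The paper does this not by an abstract equidistribution statement but by a direct divergence argument: the increments $\epsilon_{\ell+N}-\epsilon_\ell$ (mod $\Z$) involve the divergent harmonic-type series $\sum\bigl(\sqrt{(i+1)^2-c}-\sqrt{i^2-c}\bigr)$, so the fractional parts cannot remain trapped in $[r/4,1)$. This is exactly the step you flagged as the real obstacle; the point is that the explicit formula for $g_\omega(\lfloor k_\ell^+\rfloor,\ell)$ turns it into a single concrete inequality on $\epsilon_\ell$, after which the divergence argument is short.
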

\begin{proof}
For convenience, we still use the notations in the proof of Proposition \ref{prop: possible admissible subcategory for question on Hirzebruch surface}, and from the proof of Proposition \ref{prop: possible admissible subcategory for question on Hirzebruch surface}, it suffices to consider line bundles $\mathcal{L}=\mathcal{O}_X(kD_1+\ell D_2)$ with $\ell^2\ge2s\alpha^2/r$. In this case, the two roots of $f_{\omega}(k,\ell)=0$ in $\R$ are given by
$$
\begin{aligned}
k_{\ell}^+&=\frac12(-r\ell+\sqrt{(r+\frac{2}{s})(r\ell^2-2s\alpha^2)})\\
k_{\ell}^-&=\frac12(-r\ell-\sqrt{(r+\frac{2}{s})(r\ell^2-2s\alpha^2)}).
\end{aligned}
$$
In particular, we have $f_{\omega}(\lfloor k^+_{\ell}\rfloor,\ell)<0$, where $\lfloor k^+_{\ell}\rfloor$ denotes the greatest integer less than or equal to $k^+_{\ell}$. The strategy is to demonstrate that there exists some $\ell\in\Z$ such that
$$
\begin{aligned}
g_{\omega}(\lfloor k^+_{\ell}\rfloor,\ell)>0,
\end{aligned}
$$ 
and consequently, $\mathcal{L}=\mathcal{O}_X(\lfloor k^+_{\ell}\rfloor D_1+\ell D_2)$ is the line bundle which is $\sigma_{\omega,0}$-stable but does not admit a dHYM metric with respect to $\omega$.

For convenience, we denote $k^+_{\ell}-\lfloor k^+_{\ell}\rfloor=\epsilon_{\ell}\in[0,1)$. A direct computation yields
$$
g_{\omega}(\lfloor k^+_{\ell}\rfloor,\ell)=(\frac{r^2}{4}+\frac{r}{2s}+\epsilon_{\ell}-\frac{\epsilon_{\ell}}{s})s\ell+(\frac{r}{4}-\epsilon_{\ell})s\sqrt{s(sr+2)(r\ell^2-2s\alpha^2)}+\left(\epsilon_{\ell}^2-\frac{r}{2}\epsilon_{\ell}\right)s.
$$
\begin{enumerate}[$(1)$]
\item If $r\ge4$, then $r/4-\epsilon_{\ell}\ge0$ always holds, which implies that $g_{\omega}(\lfloor k^+_{\ell}\rfloor,\ell)>0$ for sufficiently large $\ell>0$. 

\item If $1\leq r<4$, then 
$$
\epsilon_{\ell+N}-\epsilon_{\ell}\equiv\frac{-Nr}{2}+\sum_{i=\ell}^{\ell+N}\sqrt{r(r+\frac{2}{s})}\left\{\sqrt{(i+1)^2-\frac{2s\alpha^2}{r}}-\sqrt{i^2-\frac{2s\alpha^2}{r}}\right\}\pmod{\Z},
$$
where $\{\mbox{-}\}$ is the fractional part. Note that the series
$$
\sum_{i=\ell}^{\infty}\sqrt{r(r+\frac{2}{s})}\left\{\sqrt{(i+1)^2-\frac{2s\alpha^2}{r}}-\sqrt{i^2-\frac{2s\alpha^2}{r}}\right\}
$$
diverges, since
$$
\left\{\sqrt{(\ell+1)^2-\frac{2s\alpha^2}{r}}-\sqrt{\ell^2-\frac{2s\alpha^2}{r}}\right\}\sim\frac{1}{2\ell}
$$
as $\ell\to\infty$. Therefore, for any $\ell$, if $\epsilon_{\ell}\leq r/4$, the result is required. Otherwise we can always choose a suitable $N\in\Z_{>0}$ due to the divergence of the series, such that
$$
\epsilon_{\ell+N}<\frac{r}{4}.
$$
As a result, it follows $g_{\omega}(\lfloor k^+_{\ell+N}\rfloor,\ell+N)>0$.
\end{enumerate}
\end{proof}

\bibliographystyle{alpha}
\bibliography{sample}

\end{document}